\newcommand{\R}{\mathbb{R}} 
\newcommand{\N}{\mathbb{N}} 
\newcommand{\E}{\mathbb{E}} 
\renewcommand{\P}{\mathbb{P}} 
\newcommand{\norm}[1]{\left\lVert#1\right\rVert} 
\newcommand{\inner}[2]{\left\langle#1, #2\right\rangle} 
\newcommand{\eps}{\varepsilon} 
\newtheorem{defn}{Definition}
\newtheorem{theo}{Theorem}
\newtheorem*{theo*}{Theorem}
\newtheorem{prop}{Proposition}
\newtheorem{lem}{Lemma}
\newtheorem{cor}{Corollary}
\newtheorem{assu}{Assumption}
\renewcommand{\H}{\mathcal{H}}
\newcommand{\U}{\mathcal{U}}
\newcommand{\B}{\mathcal{B}}
\newcommand{\X}{\mathcal{X}}
\title{Improved error bounds for Koopman operator and reconstructed trajectories approximations with kernel-based methods}
\author[1]{Diego Olguín}
\author[1]{Axel Osses}
\author[1]{Héctor Ramírez}
\affil[1]{Departamento de Ingeniería Matemática and Centro de Modelamiento Matemático (CNRS IRL2807), Universidad de Chile, Santiago, Chile}
\date{}  
\begin{document}
\maketitle

\begin{abstract}
In this article, we propose a new error bound for Koopman operator approximation using Kernel Extended Dynamic Mode Decomposition. The new estimate is $O(N^{-1/2})$, with a constant related to the probability of success of the bound, given by Hoeffding's inequality, similar to other methodologies, such as Philipp et al. Furthermore, we propose a \textit{lifting back} operator to obtain trajectories generated by embedding the initial state and iterating a linear system in a higher dimension. This naturally yields an $O(N^{-1/2})$ error bound for mean trajectories. Finally, we show numerical results including an example of nonlinear system, exhibiting successful approximation with exponential decay faster than $-1/2$, as suggested by the theoretical results. 
\end{abstract}

\tableofcontents

\section{Introduction}
The study of dynamical systems, both in continuous and discrete time, is a widely developed area of research in various branches of mathematics, science, and engineering. In his pioneering work, Koopman \cite{Koopman1931HamiltonianSpace} introduced the Koopman operator, a fundamental mathematical tool for the study of dynamical systems, which was later extended by Koopman and von Neumann \cite{Koopman1932DynamicalSpectra}.

In recent years, the Koopman operator has re-emerged in the literature as a key tool for analyzing nonlinear dynamical systems through the lens of finite-data approximations. This renewed interest has been driven primarily by the work of Mezi\'c and Budi\v{s}i\'c, along with other collaborators \cite{Budisic2009AnObservables, Budisic2012GeometryFlows, Budisic2012AppliedKoopmanism}, giving rise to what many authors recognize as 'Koopmanism', a line of research continued by Mezi\'c in collaboration with several co-authors \cite{Arbabi2017StudyDecomposition,Mezic2013AnalysisOperator,Mezic2022OnOperator,Mezic2024ACases}. Another important research group on the Koopman operator was led by Brunton, Kutz and Proctor \cite{Baddoo2022KernelOptimization, Brunton2022ModernSystems, Proctor2016KoopmanControl, Proctor2018GeneralizingControl}.

Exploiting the linearity of the Koopman operator, albeit at the cost of working in infinite-dimensional spaces, various techniques have been developed to approximate it using finite-dimensional matrices, thereby reformulating nonlinear problems in computationally accessible terms. In this context, the works of Schmid \cite{Schmid2008DynamicData}, who introduced \textit{Dynamic Mode Decomposition} (DMD), and Williams \cite{Williams2015ADecomposition}, who proposed \textit{Extended Dynamic Mode Decomposition} (EDMD), stand out.

The convergence of EDMD techniques has been studied qualitatively in \cite{Mezic2022OnOperator, Klus2016OnOperator, Williams2015ADecomposition} and quantitatively in \cite{Kohne2025boldsymbolLboldsymbolinftyDecomposition, Nuske2023Finite-DataControl, Philipp2023ErrorFramework}. In particular, in \cite{Zhang2022APredictions} it was shown for deterministic systems under certain assumptions that the error bound is of order $O(N^{-1/2})$, where $N$ is the number of data points used to approximate the operator.

An important work on error bounds for Koopman operator approximation is that of Philipp et al. \cite{Philipp2024ErrorOperator}, where they proved, in the case of stochastic systems, a bound in probability in terms of $N \in \N$ the number of points used for approximate the operator that is of the form
\[
\norm{\U-\U_N}_{\H_{N} \to L^2} \leq \sqrt{\lambda_{N+1}} \norm{\U}_{\H \to \H} + c_N \eps,
\]
where $\U$ is the Koopman operator, $\U_N$ is its approximation by EDMD, $\lambda_{N+1}$ is a constant decreasing to $0$ as $N$ goes to infinity, $c_N$ is a constant depending on $N$, and $\varepsilon > 0$ is sufficiently small. This result is important in the recent literature because it gives a bound that quantifies the convergence of $\U_N$ to $\U$ in terms of the number of points, including an arbitrary error $\varepsilon$.

This key result has two important possibilities for improvement, the first being the dependence of the probability of success on $\varepsilon$ and this on $N$. It will be shown later in this article that in order to obtain an error bound of order $O(N^{-1/2})$, it's necessary that the probability of success decreases to $0$ as $N$ tends to infinity. The other possibility to improve the bound is about an assumption made in Philipp et al. that the eigenvalues of a certain operator are simple, which is not easy to confirm in some systems, so it's important to give an example or remove it.

In this work, we propose an error bound for the approximation of the operator for stochastic systems, of order $O(N^{-1/2})$, the same as other mentioned works, but with a high probability of success for the convergence of EDMD, and without requiring the eigenvalue simplicity assumption. This leads to a universal framework for approximating the Koopman operator without compromising the precision order of $O(N^{-1/2})$, an expected order of error for this type of approximation.

To achieve this, the paper is structured as follows: in Section 2, we present the necessary preliminaries for understanding the Koopman approximation in Reproducing Kernel Hilbert Spaces (RKHS) for stochastic systems. Section 3 contains our main theoretical results, first for the Koopman operator approximation in RKHS and then for the trajectory approximation using a lifting-back operator proposed in this work. In Section 4, we present some numerical results for linear systems and SIR systems.   

\section{Preliminaries}
In this section, we present the necessary notation and preliminaries for the results of this work, dividing it into Reproducing Kernel Hilbert Spaces (RKHS) notation and key results, covariance operators, and the Koopman operator.

Consider an autonomous and stochastic dynamical system represented by the
by the following equation:
\begin{equation}
    \mathbf{x}_{k+1} = \mathbf{f}\left(\mathbf{x}_k, \mathbf{w}_k\right).
    \label{eq:dyn_sys}
\end{equation}
Here, $\mathbf{f}: \R^n \times \Omega_\mathbf{w} \to \R^n$, where:
\begin{itemize}
    \item $\mathbf{x}_k$ represents the state of the system at the discrete time step
$k$.
    \item $\mathbf{w}_k$ is a random variable with support equal to
$\Omega_\mathbf{w}$, which introduces stochasticity into the system.
\end{itemize}

We can also assume that $\mathbf{x}_{k+1}$ depends on time and external inputs or controls. In general, however, these dependencies can be incorporated as extra states, allowing a system with such dependencies to be treated as one of the form considered.

Although the functional form used to represent the dynamical system in \eqref{eq:dyn_sys} is better for modeling and understanding the system, a probabilistic view of the dynamical system through its transition measure is more appropriate for demonstrating the desired results. This measure models the transition from a point to a set. Consider the Borel \(\sigma\) algebra over $\R^n$, denoted by $\B$.

\begin{defn}[Transition measure] \label{def:transition_measure} The \textit{transition measure} is defined as the probability measure
\(\rho: \R^n \times \B \to [0, 1]\), which acts as
\[
\rho(\mathbf{x}, \mathcal{A}) = \P (\mathbf{f}(\mathbf{x}, \cdot) \in \mathcal{A}).
\]
We use the notation \(\rho (\mathbf{x}, dx') = d\rho(\mathbf{x}, \cdot) (x')\).
\end{defn}
A property that will be needed later is that all points of the dynamical system remain within a set with probability $1$. This set is called the invariant space of the dynamical system.

\begin{defn}[Invariant space] A subset $\X \subseteq \R^n$ is called an \textit{invariant space} if 
\[
\rho (\mathbf{x}, \X) = 1, \, \forall \mathbf{x} \in \X.
\]
\end{defn}

\subsection{Reproducing Kernel Hilbert Spaces}

Reproducing Kernel Hilbert Spaces will be functional spaces in which the Koopman operator satisfies favorable properties under the assumptions of the dynamical system. These spaces are characterized by the existence of a function, called the reproducing kernel, which has good properties with respect to the inner product of the Hilbert space.

For completeness, we present some fundamental results from the theory of RKHS that are crucial for understanding the definition of the Koopman operator and the subsequent results in this paper.

\begin{defn}[Reproducing Kernel Hilbert Space (RKHS) \cite{Mercer1909XVI.Equations}]
A Hilbert space \( \H \subset \mathbb{C}^{\X} \) is said to be an RKHS if there exists a function \( k: \X \times \X \to \mathbb{C} \), called the \textit{reproducing kernel}, such that:
\begin{enumerate}
    \item \( k_p \equiv k(\cdot, p) \in \H, \ \forall p \in \X \).
    \item \( f(p) = \langle f, k_p \rangle_{\H}, \, \forall p \in \X, \ \forall f \in \H. \)
\end{enumerate}
\end{defn}
The second property, known as the \textit{reproducing property}, is fundamental in RKHS theory and gives rise to many important properties, as will be presented next. In the following, $\H$ will denote a RKHS asociated to a kernel $k$, that in general is assumed to be a positive definite function.

\begin{defn}[Positive Semi-Definite Function]
A function \( k: \X \times \X \to \mathbb{C} \) is called positive semi-definite if for all \( n \geq 1 \), \( (c_1, \dots, c_n) \in \mathbb{C}^n \), and \( (x_1, \dots, x_n) \in \X^n \), it holds that:
\[
\sum_{i=1}^n \sum_{j=1}^n c_i \overline{c_j} k(x_i, x_j) \geq 0.
\]
If the inequality is strict, the function is said to be positive definite.
\end{defn}

One possible way to understand the functions in these spaces is as pointwise limits of functions defined as partial series of terms related to finite data sets, as the following theorem states. This fact will be used in the proofs of the results in this work.

\begin{theo}[Moore–Aronszajn \cite{Aronszajn1950TheoryKernels}]
Let \( k \) be a positive semi-definite function on \( \X \times \X \). Then, there exists a unique Hilbert space \( \H \subset \mathbb{C}^{\X} \) with \( k \) as its \textit{reproducing kernel}. The subspace \( \H_0 \) of \( \H \), generated by the functions \( (k(\cdot, x))_{x \in \X} \), is dense in \( \H \), and \( \H \) coincides with the set of functions on \( \X \) that are pointwise limits of Cauchy sequences in \( \H_0 \), with the inner product defined by
\[
\langle f, g \rangle_{\H_0} = \sum_{i=1}^n \sum_{j=1}^m \alpha_i \bar{\beta}_j k(y_j, x_i),
\]
where, for \( (x_1, \dots, x_n) \in \X^n \) and \( (y_1, \dots, y_m) \in \X^m \) we have
\[
 f(\cdot) = \sum_{i=1}^n \alpha_i k(\cdot, x_i), \quad g(\cdot) = \sum_{j=1}^m \beta_j k(\cdot, y_j).
\]

\end{theo}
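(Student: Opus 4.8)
\section*{Proof proposal}

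The plan is the classical Aronszajn construction: realize $\H$ as the completion of the pre-Hilbert space $\H_0$ of finite kernel combinations, and then identify that abstract completion with a concrete space of functions on $\X$. Uniqueness will follow by showing any competing RKHS must contain an isometric copy of $\H_0$ whose closure is the whole space.

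First I would set $\H_0 := \operatorname{span}\{k(\cdot,x) : x \in \X\}$ and equip it with the sesquilinear form displayed in the statement. The initial task is to verify that this form is well defined, i.e.\ independent of the representations chosen for $f$ and $g$; this follows from the identity $\langle f, g\rangle_{\H_0} = \sum_j \overline{\beta_j} f(y_j) = \sum_i \alpha_i \overline{g(x_i)}$, which exhibits the value as depending only on $f$ and $g$ as functions. Conjugate symmetry and linearity in the first slot are immediate, while $\langle f, f\rangle_{\H_0} = \sum_{i,j} \alpha_i \overline{\alpha_j} k(x_i, x_j) \geq 0$ is precisely the positive semi-definiteness of $k$ (after noting that $k(x,y) = \overline{k(y,x)}$, which is itself a consequence of positive semi-definiteness). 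Taking $g = k_x$ yields the reproducing property on $\H_0$, namely $\langle f, k_x\rangle_{\H_0} = f(x)$. The Cauchy--Schwarz inequality for semi-definite forms then gives $\abs{f(x)}^2 \leq \langle f, f\rangle_{\H_0}\, k(x,x)$, so $\langle f, f\rangle_{\H_0} = 0$ forces $f \equiv 0$; hence $\langle \cdot, \cdot\rangle_{\H_0}$ is a genuine inner product and $\H_0$ a pre-Hilbert space.

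Next I would pass to the completion, and here lies the main obstacle: turning the abstract completion into a concrete function space. For a Cauchy sequence $(f_n) \subset \H_0$, the estimate $\abs{f_n(x) - f_m(x)} \leq \norm{f_n - f_m}_{\H_0}\sqrt{k(x,x)}$ shows that $(f_n(x))_n$ is Cauchy in $\C$ for each $x$, so $f(x) := \lim_n f_n(x)$ is well defined, and the same estimate shows $f \equiv 0$ whenever $\norm{f_n}_{\H_0} \to 0$, so the limit depends only on the equivalence class. The delicate direction is injectivity of the induced map from the completion into $\C^{\X}$: if a Cauchy sequence $(f_n)$ tends to $0$ pointwise, one must deduce $\norm{f_n}_{\H_0} \to 0$. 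I would fix a large index $m$ with a finite representation $f_m = \sum_i \alpha_i k(\cdot, x_i)$, observe $\langle f_n, f_m\rangle_{\H_0} = \sum_i \overline{\alpha_i} f_n(x_i) \to 0$ as $n \to \infty$, and combine this with $\abs{\langle f_m, f_m\rangle_{\H_0} - \langle f_n, f_m\rangle_{\H_0}} \leq \norm{f_m - f_n}_{\H_0}\norm{f_m}_{\H_0}$ to obtain $\norm{f_m}_{\H_0}^2 \leq \eps \norm{f_m}_{\H_0}$ once $m$ exceeds the Cauchy threshold for $\eps$. This identifies $\H$ with exactly the set of pointwise limits of Cauchy sequences in $\H_0$, with the inner product extended by continuity, and $\H_0$ is dense in $\H$ by construction. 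Finally $k$ reproduces on $\H$ by continuity of evaluation: for $f \in \H$ with approximating sequence $(f_n)$, $\langle f, k_x\rangle_{\H} = \lim_n \langle f_n, k_x\rangle_{\H_0} = \lim_n f_n(x) = f(x)$.

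For uniqueness, suppose $\G$ is any RKHS on $\X$ with reproducing kernel $k$. Each $k_x$ then lies in $\G$, so $\H_0 \subseteq \G$, and the reproducing property in $\G$ forces $\langle f, g\rangle_{\G} = \langle f, g\rangle_{\H_0}$ for $f, g \in \H_0$, since both equal the same closed-form expression. As $\G$ is complete, the closure of $\H_0$ in $\G$ is a Hilbert space isometric to $\H$; moreover, if $g \in \G$ is orthogonal to every $k_x$ then $g(x) = \langle g, k_x\rangle_{\G} = 0$ for all $x$, whence $g = 0$ and the closure of $\H_0$ is all of $\G$. Checking that this isometry respects pointwise evaluation --- again via continuity of $f \mapsto \langle f, k_x\rangle$ --- shows $\G = \H$ as spaces of functions carrying the same inner product, which completes the argument.
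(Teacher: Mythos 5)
The paper offers no proof of this statement---it is quoted as a classical result and attributed to Aronszajn---so there is nothing internal to compare against. Your argument is the standard Aronszajn construction: the pre-Hilbert space $\H_0$ of finite kernel combinations with well-definedness checked through $\langle f,g\rangle_{\H_0}=\sum_j \overline{\beta_j}f(y_j)=\sum_i \alpha_i\overline{g(x_i)}$, the reproducing property and Cauchy--Schwarz giving definiteness, the completion realized inside $\C^{\X}$ via the bound $\abs{f_n(x)-f_m(x)}\leq \norm{f_n-f_m}_{\H_0}\sqrt{k(x,x)}$, and uniqueness via density of the kernel sections in any competing RKHS with the same kernel. This is correct, and you handle the one genuinely delicate step---injectivity of the map from the abstract completion into $\C^{\X}$, i.e.\ that a Cauchy sequence converging pointwise to zero must converge to zero in norm---with the right argument, so the identification of $\H$ with the set of pointwise limits of Cauchy sequences in $\H_0$ goes through as claimed.
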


A key object that will appear next is the \textit{feature map}, which can be understood as a function that maps $\X$ into $\ell^2$, providing an embedding that represents the kernel as an inner product in $\ell^2$.

\begin{theo}
A function \( k: \X \times \X \to \mathbb{C} \) is a \textit{reproducing kernel} of a Hilbert space \( \H \) if and only if there exists a function \( \Phi: \X \to \ell^2(\X) \) such that, for all \( (x, y) \in \X \times \X \),
\[
k(x, y) = \langle \Phi(x), \Phi(y) \rangle_{\ell^2(\X)}.
\]
Moreover, the space \( \ell^2(\X) \) is isometric to \( \H \) via an isometry \( T: \H \to \ell^2(\X) \), and the function \( \Phi \) is given by \( \Phi(x) = T(k(\cdot, x)) \). This function \( \Phi \) is called the \textit{feature map} associated with \( k \).
\end{theo}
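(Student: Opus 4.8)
The plan is to prove the two implications separately; the ``moreover'' part then falls out of the proof of the forward direction. I would first record the easy converse: \emph{if such a $\Phi$ exists, then $k$ is a reproducing kernel}. Since $\ell^2(\X)$ is a Hilbert space, it suffices to check that $k$ is positive semi-definite and then invoke the Moore--Aronszajn theorem to produce the (unique) RKHS $\H \subset \C^{\X}$ having $k$ as its reproducing kernel. For $n \geq 1$, $(c_1,\dots,c_n) \in \C^n$ and $(x_1,\dots,x_n) \in \X^n$, substituting $k(x_i,x_j)=\inner{\Phi(x_i)}{\Phi(x_j)}_{\ell^2(\X)}$ and using sesquilinearity of the inner product gives
\[
\sum_{i=1}^n \sum_{j=1}^n c_i \overline{c_j}\, k(x_i,x_j)
= \inner{\sum_{i=1}^n c_i \Phi(x_i)}{\sum_{j=1}^n c_j \Phi(x_j)}_{\ell^2(\X)}
= \norm{\sum_{i=1}^n c_i \Phi(x_i)}_{\ell^2(\X)}^{2} \geq 0 ,
\]
so $k$ is positive semi-definite, as required.

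For the forward direction together with the isometry statement, I would argue as follows. Assume $k$ is the reproducing kernel of an RKHS $\H$. By Moore--Aronszajn the linear span $\H_0$ of $\{k_x := k(\cdot,x): x \in \X\}$ is dense in $\H$, so $\H$ admits an orthonormal basis $(e_i)_{i\in I}$ with $\abs{I} \leq \abs{\X}$. Fixing an injection $\iota: I \hookrightarrow \X$ and extending by zero outside $\iota(I)$, the coordinate map $T : \H \to \ell^2(\X)$, $(Tf)_{\iota(i)} := \inner{f}{e_i}_{\H}$, is a linear isometry of $\H$ onto a closed subspace of $\ell^2(\X)$. Define the feature map by $\Phi(x) := T\bigl(k(\cdot,x)\bigr)$. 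Because $T$ preserves inner products and by the reproducing property $\inner{k_x}{k_y}_{\H} = k_x(y)$, we obtain, for all $x,y \in \X$,
\[
\inner{\Phi(x)}{\Phi(y)}_{\ell^2(\X)} = \inner{T k_x}{T k_y}_{\ell^2(\X)} = \inner{k_x}{k_y}_{\H} = k(x,y),
\]
where the last equality uses the conjugate symmetry $k(y,x) = \overline{k(x,y)}$ of a reproducing kernel (automatic, and trivial when $k$ is real-valued, as in the applications below). This is exactly the claimed factorization, with $x \mapsto \Phi(x)=T(k(\cdot,x))$ and $T$ the announced isometry.

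The step that genuinely needs care is the precise sense of ``$\ell^2(\X)$ is isometric to $\H$''. Since $\{k_x\}_{x\in\X}$ only spans a dense subspace, one has $\dim \H \leq \abs{\X}$ with possibly strict inequality, so $T$ is in general an isometric \emph{embedding}, and the honest statement is that $\H$ is isometrically isomorphic to $\overline{\mathrm{span}}\,\{\Phi(x): x\in\X\}$, a closed subspace of $\ell^2(\X)$; when $\H$ is separable (the standing situation for the Koopman/EDMD results that follow) one may equally take the classical $\ell^2 = \ell^2(\N)$ as feature space. The remaining ingredients---positive semi-definiteness and the appeal to Moore--Aronszajn---are routine, and the only further bookkeeping subtlety is the conjugation convention on $\ell^2(\X)$ needed so that $k(x,y)$ rather than $k(y,x)$ appears, which is settled by the Hermitian symmetry noted above and is vacuous in the real-valued case used throughout the paper.
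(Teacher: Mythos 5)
The paper itself offers no proof of this statement: it is quoted as background from RKHS theory (alongside Moore--Aronszajn), so there is no in-paper argument to compare against. Your proof is the standard one and is correct. The converse direction (factorization through $\ell^2(\X)$ implies positive semi-definiteness, then Moore--Aronszajn) is exactly right, and in the forward direction the coordinate map $(Tf)_{\iota(i)} = \inner{f}{e_i}_{\H}$ built from an orthonormal basis of cardinality at most $\abs{\X}$ is the natural way to land in $\ell^2(\X)$ specifically, with $\Phi(x) = T(k(\cdot,x))$ then reproducing the kernel because $T$ preserves inner products and $\inner{k_x}{k_y}_{\H} = k_x(y)$. You are also right on the two points where the statement as printed is loose: in general $T$ is only an isometric embedding onto the closed span of $\{\Phi(x)\}_{x\in\X}$, not a surjection onto $\ell^2(\X)$ (equality of Hilbert dimensions is needed for that), and in the complex case the reproducing property gives $\inner{k_x}{k_y}_{\H} = k(y,x) = \overline{k(x,y)}$, so one must either fix the conjugation convention in $\ell^2(\X)$ (or compose $T$ with coordinatewise conjugation) to obtain $k(x,y)$ on the nose; both issues are vacuous for the real-valued Mat\'ern kernels used later in the paper, as you note. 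Flagging these explicitly, as you did, is the right call; no gap remains.
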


An assumption that will be made later is that the RKHS generated by a kernel on \( \X \) is norm-equivalent to a certain Sobolev space. To this end, we introduce a kernel that is closely related to fractional Sobolev spaces: the Matérn kernel.

\begin{defn}
The Mat\'ern kernel with smoothness parameter \( \nu > 0 \) and length scale \( \ell > 0 \) is defined as
\[
k_{\nu}(x,y) = \frac{2^{1-\nu}}{\Gamma(\nu)} \left( \sqrt{2\nu} \frac{\|x-y\|}{\ell} \right)^\nu B_\nu \left( \sqrt{2\nu} \frac{\|x-y\|}{\ell} \right),
\]
where \( \Gamma \) denotes the Gamma function and \( B_\nu \) is the modified Bessel function of the second kind with parameter \( \nu \).
\end{defn}

To explore the connection between Sobolev spaces and this kernel, we introduce key concepts, such as the definition of Sobolev spaces on \( \mathbb{R}^n \) using the Fourier transform, and the interior cone condition.

\begin{defn}[Fractional Sobolev Space \cite{Adams2003SobolevSpaces}]
The Sobolev space of smoothness \( s > 0 \) is defined as
\[
H^s(\mathbb{R}^n) = \left\{ f \in L^2(\mathbb{R}^n) : \widehat{f}(\cdot)(1 + \|\cdot\|_2^2)^{s/2} \in L^2(\mathbb{R}^n) \right\},
\]
where \( \widehat{f} \) denotes the Fourier transform of \( f \).
\label{def:frac_sob}
\end{defn}

\begin{defn}[Interior Cone Condition \cite{Wendland2004ScatteredApproximation}]
A set \( \Omega \subset \mathbb{R}^n \) satisfies the interior cone condition if there exist an angle \( \theta \in (0, \pi/2) \) and a radius \( r > 0 \) such that, for each \( x \in \Omega \), there exists a unit vector \( \xi(x) \in \mathbb{R}^n \) with \( \|\xi(x)\|_2 = 1 \) such that the cone
\[
C(x, \xi(x), \theta, r) := \left\{ x + \lambda y : y \in \mathbb{R}^n, \|y\| = 1, y^\top \xi(x) \geq \cos \theta, \lambda \in [0, r] \right\}
\]
is contained in \( \Omega \).
\end{defn}

The following theorem establishes a key condition for the developments in this work, that the Mat\'ern kernel is norm-equivalent to a Sobolev space.

\begin{theo}[\cite{Tuo2016AProperties, Wendland2004ScatteredApproximation}]
If \( \X \) is a compact set that satisfies the interior cone condition and \( k_\nu \) is the Mat\'ern kernel with parameter \( \nu > 0 \), then for \( s = \nu + n/2 \), the RKHS \( \H \) associated with \( k_\nu \) is norm-equivalent to the Sobolev space \( H^s(\X) \).
\end{theo}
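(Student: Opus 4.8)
The plan is to prove the equivalence by first lifting everything to all of $\R^n$ and then descending to $\X$. Concretely, I would (a) identify the \emph{native space} of the Matérn kernel on $\R^n$ with $H^s(\R^n)$ up to equivalent norms via a Fourier-transform argument, (b) invoke the restriction theorem for native spaces to express $\H$ as a trace space of that native space over $\X$, and (c) use the interior cone condition to produce a bounded Sobolev extension operator, which pins down the trace norm as being equivalent to $\norm{\cdot}_{H^s(\X)}$.

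For step (a): since $k_\nu(x,y)=\Phi_\nu(x-y)$ for the radial profile $\Phi_\nu$ in the definition, I would use the classical evaluation of the Fourier transform of $t\mapsto t^\nu B_\nu(t)$, which gives (up to a positive constant and the chosen Fourier normalization)
\[
\widehat{\Phi_\nu}(\omega) \;=\; C_{\nu,\ell,n}\left(\tfrac{2\nu}{\ell^2} + \norm{\omega}_2^2\right)^{-(\nu+n/2)} .
\]
The only property I actually need downstream is that, since $2\nu/\ell^2>0$, there are constants $0<c_1\le c_2$ with $c_1(1+\norm{\omega}_2^2)^{-s}\le\widehat{\Phi_\nu}(\omega)\le c_2(1+\norm{\omega}_2^2)^{-s}$ for $s=\nu+n/2$; in particular $\widehat{\Phi_\nu}>0$, so $k_\nu$ is positive definite on $\R^n$. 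By the Fourier characterization of native spaces of translation-invariant kernels \cite{Wendland2004ScatteredApproximation}, the RKHS $\mathcal{N}(\R^n)$ of $k_\nu$ consists of the $f\in L^2(\R^n)$ with $\int \abs{\widehat f(\omega)}^2/\widehat{\Phi_\nu}(\omega)\,d\omega<\infty$, and its squared norm is (a constant times) that integral. Plugging in the two-sided bound and comparing with Definition~\ref{def:frac_sob} yields $c_2^{-1}\norm{f}_{H^s(\R^n)}^2 \le \norm{f}_{\mathcal{N}(\R^n)}^2 \le c_1^{-1}\norm{f}_{H^s(\R^n)}^2$, i.e. $\mathcal{N}(\R^n)=H^s(\R^n)$ with equivalent norms.

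For steps (b)–(c): the restriction theorem for native spaces identifies $\H$, the RKHS of $k_\nu|_{\X\times\X}$, with $\{f|_\X : f\in\mathcal N(\R^n)\}$ equipped with $\norm{g}_\H=\inf\{\norm{f}_{\mathcal N(\R^n)} : f|_\X=g\}$; combining with step (a), $\norm{g}_\H$ is equivalent to $\inf\{\norm{f}_{H^s(\R^n)} : f\in H^s(\R^n),\ f|_\X=g\}$. One inequality is then immediate: the restriction map $H^s(\R^n)\to H^s(\X)$ is bounded, so $\norm{g}_{H^s(\X)}\le C\,\norm{g}_\H$ after taking the infimum. For the reverse inequality I would use that $\X$ is compact and satisfies the interior cone condition, hence admits a bounded Sobolev extension operator $E\colon H^s(\X)\to H^s(\R^n)$ with $(Eg)|_\X=g$ (a Stein/Calderón-type extension theorem, cf. \cite{Adams2003SobolevSpaces}); then $\norm{g}_\H\le\norm{Eg}_{\mathcal N(\R^n)}\asymp\norm{Eg}_{H^s(\R^n)}\le C\,\norm{g}_{H^s(\X)}$. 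Together the two estimates give $\H\cong H^s(\X)$ with equivalent norms.

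The genuinely technical ingredient is the Fourier-transform computation in step (a): evaluating the $n$-dimensional radial Fourier transform of $\norm{\cdot}^\nu B_\nu(\norm{\cdot})$ is a special-function calculation, although since only the polynomial two-sided bound on $\widehat{\Phi_\nu}$ is used afterwards, one may instead quote the spectral density directly. The other point requiring care is the existence of the bounded extension operator: the interior cone condition (together with compactness) is precisely what licenses the classical extension theorems invoked here, which is why these hypotheses appear in the statement; making this rigorous amounts to citing the appropriate extension result rather than any new argument.
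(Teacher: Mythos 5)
The paper does not prove this theorem at all---it is quoted directly from the cited references (Wendland; Tuo--Wu)---and your argument is precisely the standard proof used there: Fourier characterization of the Matérn native space as $H^{s}(\mathbb{R}^n)$, the restriction theorem with the infimum norm, and a bounded Sobolev extension operator furnished by the cone condition (for bounded sets the uniform interior cone condition is equivalent to having Lipschitz boundary, so Stein/Calderón extension applies at fractional order). So the proposal is correct and takes essentially the same approach as the source the paper relies on.
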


Although the definition of the Matérn kernel appears complex and computationally expensive due to the Bessel function, it simplifies considerably for certain values of \( \nu \). In these cases, the kernel reduces to a function involving only fundamental functions, like exponentials and polynomials. The following proposition presents the case \( \nu = 1/2 \), while the cases \( \nu = 3/2 \) and \( \nu = 5/2 \) also yield closed-form expressions \cite{Porcu2024TheLearning}.

\begin{prop} \label{prop:laplace_kernel}
If \( \nu = 1/2 \), then
\[
k_\nu (x, y) = \exp \left( -\frac{\|x - y\|}{\ell} \right).
\]
\end{prop}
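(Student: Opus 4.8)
The plan is to substitute $\nu = 1/2$ directly into the definition of the Matérn kernel and simplify, the only non-elementary ingredient being the closed form of the modified Bessel function $B_{1/2}$. First I would introduce the shorthand $r := \sqrt{2\nu}\,\norm{x-y}/\ell$. Since $\nu = 1/2$ gives $\sqrt{2\nu} = 1$, we have $r = \norm{x-y}/\ell$, and the kernel reduces to
\[
k_{1/2}(x,y) = \frac{2^{1-1/2}}{\Gamma(1/2)}\, r^{1/2}\, B_{1/2}(r) = \frac{\sqrt{2}}{\Gamma(1/2)}\, r^{1/2}\, B_{1/2}(r).
\]

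Second, I would invoke (or, in one line, verify) the classical identity $B_{1/2}(z) = \sqrt{\pi/(2z)}\, e^{-z}$ for $z > 0$. This is standard for half-integer order; if a self-contained argument is wanted, one checks that $z \mapsto \sqrt{\pi/(2z)}\,e^{-z}$ solves the modified Bessel equation of order $\tfrac12$ and has the exponential decay at infinity that characterizes $B_\nu$, or one reads it off the integral representation of $B_\nu$. Either way it is a short, routine step.

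Third, I would substitute this together with $\Gamma(1/2) = \sqrt{\pi}$ and cancel:
\[
k_{1/2}(x,y) = \frac{\sqrt{2}}{\sqrt{\pi}}\, r^{1/2}\, \sqrt{\frac{\pi}{2r}}\, e^{-r} = \left(\frac{\sqrt{2}}{\sqrt{\pi}}\cdot\sqrt{\frac{\pi}{2}}\right) r^{1/2} r^{-1/2} e^{-r} = e^{-r} = \exp\!\left(-\frac{\norm{x-y}}{\ell}\right),
\]
which is the claim. I would also remark that the apparent singularity of $B_{1/2}$ at $0$ is cancelled by the prefactor $r^{1/2}$, so that $k_{1/2}$ extends continuously to $x = y$ with value $1$, matching the normalization used in the definition.

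There is essentially no obstacle: the statement is a direct computation. The single point deserving care is the use of the closed form of $B_{1/2}$; I would either cite it from a reference on special functions or include the one-line ODE verification, and the rest is elementary algebra with the Gamma value $\Gamma(1/2) = \sqrt{\pi}$.
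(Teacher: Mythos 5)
Your proof is correct and follows essentially the same route as the paper: substitute \( \nu = 1/2 \), invoke the closed form \( B_{1/2}(z) = \sqrt{\pi/(2z)}\,e^{-z} \) (which the paper cites from tables of special functions), and cancel using \( \Gamma(1/2) = \sqrt{\pi} \). The added remark on the continuous extension at \( x = y \) is a nice touch but not needed.
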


\begin{proof}[Proof of Proposition \ref{prop:laplace_kernel}]
From \cite{Barton1965HandbookTables., Davis1944AFunctions}, it is known that
\[
B_{1/2}(z) = \sqrt{ \frac{\pi}{2} } \frac{e^{-z}}{\sqrt{z}},
\]
thus, we deduce that
\[
\begin{aligned}
\frac{\sqrt{2}}{\Gamma(1/2)} \left( \frac{\|x - y\|}{\ell} \right)^{1/2} \sqrt{ \frac{\pi}{2} } \exp \left( -\frac{\|x - y\|}{\ell} \right) \left( \frac{\|x - y\|}{\ell} \right)^{-1/2} = \exp \left( -\frac{\|x - y\|}{\ell} \right).
\end{aligned}
\]
Obtaining the desired expression for \(k_\nu(x, y) \).
\end{proof}

We consider \( \X \) an invariant space of the dynamical system, and denote by \( \B_\X \) the trace \( \sigma \)-algebra of \( \B \) on \( \X \). We define \( \mu_\X: \B_\X \to [0,1] \) as a probability measure representing a random variable \( X \), where the data will be sampled. Also, we denote $X^+$ the random variable that, given $X = \mathbf{x}$, is represented by the measure $\rho(\mathbf{x}, \cdot)$, this models the posible states to which the system could evolve from a point $\mathbf{x}$. Let \( \Phi \) denote the canonical feature map associated with the kernel, that is, \( \Phi(x) = k(x, \cdot) \).

\subsection{Covariance and Koopman Operators}

From the theory of RKHS, covariance operators arise as key objects in our framework. To introduce them, we first define the Kronecker product in RKHS as a generalization of the Kronecker product of matrices. The results in this section follow from the works of Song, Fukumizu et al. \cite{Fukumizu2004DimensionalitySpaces, Fukumizu2013KernelKernels, Fukumizu2015NonparametricEmbedding, Song2009HilbertSystems, Song2013KernelModels}.

\begin{defn}[Kronecker Product]
The Kronecker product in an RKHS is defined as the following rank-one operator for fixed \( x, y \in \X \):
\[
\Phi(x) \otimes \Phi(y) : \H \to \H, \quad 
[\Phi(x) \otimes \Phi(y)] \psi = \langle \psi, \Phi(y) \rangle \Phi(x) = \psi(y)\, \Phi(x).
\]
\end{defn}

Taking the expectation of the embedded random variable \( X \) via the feature map defines an operator from \( \H \) to \( \H \), called the \textit{auto-covariance operator}.

\begin{defn}[Auto-covariance Operator]
The auto-covariance operator \( C_X : \H \to \H \) is defined as
\[
C_X = \E[\Phi(X) \otimes \Phi(X)] = \int_{\X} \Phi(x) \otimes \Phi(x) \, d\mu_\X(x).
\]
\end{defn}

We can also consider two random variables taking values in \( \X \), and embed each through the feature map. This leads to the definition of the \textit{cross-covariance operator}.

\begin{defn}[Cross-covariance Operator]
The cross-covariance operator associated with random variables \( X \) and \( X^+ \) is defined as the operator \( C_{XX^+} : \H \to \H \) given by
\[
C_{XX^+} = \mathbb{E} [\Phi(X) \otimes \Phi(X^+)] = \int_{\X} \int_{\X} \Phi(x) \otimes \Phi(y) \, \rho(x, dy) \, d\mu_\X(x),
\]
where \( \rho(x, dy) \) denotes the transition probability kernel from \( x \) to \( y \).
\end{defn}

The adjoint of these operators satisfies an elegant and useful identity, which will play a key role in the results of the next section.

\begin{prop} \label{prop:self_adjoint_kron}
For all \( x, y \in \X \), it holds that
\[
\left( \Phi(x) \otimes \Phi(y) \right)^* = \Phi(y) \otimes \Phi(x).
\]
\end{prop}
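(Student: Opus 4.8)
The plan is to verify the identity directly from the definition of the Hilbert-space adjoint, since $\Phi(x)\otimes\Phi(y)$ is a bounded (indeed rank-one) operator on $\H$ and therefore has a well-defined adjoint, uniquely characterized by the relation $\langle (\Phi(x)\otimes\Phi(y))\psi,\varphi\rangle_\H=\langle \psi,(\Phi(x)\otimes\Phi(y))^*\varphi\rangle_\H$ for all $\psi,\varphi\in\H$. The strategy is to compute the left-hand side using the explicit action $[\Phi(x)\otimes\Phi(y)]\psi=\langle\psi,\Phi(y)\rangle_\H\,\Phi(x)$ from the definition of the Kronecker product, to compute separately $\langle\psi,(\Phi(y)\otimes\Phi(x))\varphi\rangle_\H$ using the same formula with the roles of $x$ and $y$ interchanged, to show the two expressions coincide for every $\psi,\varphi$, and finally to invoke uniqueness of the adjoint.

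Concretely, I would first expand, using linearity of the inner product in its first argument,
\[
\langle (\Phi(x)\otimes\Phi(y))\psi,\varphi\rangle_\H=\big\langle\,\langle\psi,\Phi(y)\rangle_\H\,\Phi(x),\varphi\big\rangle_\H=\langle\psi,\Phi(y)\rangle_\H\,\langle\Phi(x),\varphi\rangle_\H .
\]
Then I would expand, using conjugate-linearity in the second argument,
\[
\langle\psi,(\Phi(y)\otimes\Phi(x))\varphi\rangle_\H=\big\langle\psi,\langle\varphi,\Phi(x)\rangle_\H\,\Phi(y)\big\rangle_\H=\overline{\langle\varphi,\Phi(x)\rangle_\H}\,\langle\psi,\Phi(y)\rangle_\H .
\]
Conjugate symmetry of the inner product gives $\overline{\langle\varphi,\Phi(x)\rangle_\H}=\langle\Phi(x),\varphi\rangle_\H$, so the two displays agree. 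Since $\psi,\varphi\in\H$ were arbitrary, $\Phi(y)\otimes\Phi(x)$ satisfies the defining property of $(\Phi(x)\otimes\Phi(y))^*$, and uniqueness of the adjoint yields the claimed equality.

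I do not expect a serious obstacle: the statement is a direct consequence of sesquilinearity, and the only point needing care is the bookkeeping of complex conjugates, i.e. keeping track of which slot of $\langle\cdot,\cdot\rangle_\H$ is conjugate-linear — here the second, consistent with the reproducing property $f(p)=\langle f,k_p\rangle_\H$ and with $[\Phi(x)\otimes\Phi(y)]\psi=\psi(y)\,\Phi(x)$. Alternatively, one may phrase the argument purely for abstract rank-one operators $u\otimes v\colon\psi\mapsto\langle\psi,v\rangle_\H u$, observing that $(u\otimes v)^*=v\otimes u$ is the standard identity for dyadic operators on a Hilbert space, and then specialize to $u=\Phi(x)$, $v=\Phi(y)$; this makes clear that the result uses nothing of the RKHS structure beyond the feature-map notation.
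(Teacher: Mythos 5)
Your proof is correct and follows essentially the same route as the paper's: a direct verification of the adjoint identity by computing $\langle(\Phi(x)\otimes\Phi(y))\psi,\varphi\rangle_\H$ and $\langle\psi,(\Phi(y)\otimes\Phi(x))\varphi\rangle_\H$ and matching them, then invoking uniqueness of the adjoint. If anything, your bookkeeping of the conjugate-linear slot is more careful than the paper's, which pulls scalars out of the second argument without conjugation (harmless for the conclusion, but only literally valid over the reals).
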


\begin{proof}[Proof of Proposition \ref{prop:self_adjoint_kron}]
Let \( h_1, h_2 \in \H \). Then, using the reproducing property, we compute
\[
\langle (\Phi(x) \otimes \Phi(y)) h_2, h_1 \rangle = \langle h_2(y) \Phi(x), h_1 \rangle = h_2(y) \langle \Phi(x), h_1 \rangle = h_2(y) h_1(x).
\]
On the other hand,
\[
\langle h_2, (\Phi(y) \otimes \Phi(x)) h_1 \rangle = \langle h_2, h_1(x) \Phi(y) \rangle = h_1(x) \langle h_2, \Phi(y) \rangle = h_1(x) h_2(y).
\]
Since the two expressions are equal, the identity follows.
\end{proof}

\begin{cor} \label{cor:adjoints_operator}
It holds that \( (C_{X})^* = C_X \) and \( (C_{XX^+})^* = C_{X^+X} \).
\end{cor}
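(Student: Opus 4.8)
The plan is to reduce everything to the rank-one identity of Proposition~\ref{prop:self_adjoint_kron} and then push it through the expectation defining the covariance operators. Both $C_X$ and $C_{XX^+}$ are given as (Bochner) integrals of the operator-valued maps $x\mapsto \Phi(x)\otimes\Phi(x)$ and $(x,y)\mapsto \Phi(x)\otimes\Phi(y)$, so the analytic fact I would invoke is the standard one that a bounded linear functional commutes with a Bochner integral: applied to the functional $A\mapsto \langle A h_1, h_2\rangle_{\H}$ for fixed $h_1,h_2\in\H$, this gives $\langle (\int \cdots)\,h_1, h_2\rangle_{\H} = \int \langle (\cdots)\,h_1,h_2\rangle_{\H}$. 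With that in hand the corollary is a one-line computation repeated twice.

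First I would treat $C_X$. Fix $h_1,h_2\in\H$ and write $\langle C_X^* h_2, h_1\rangle_{\H} = \overline{\langle C_X h_1, h_2\rangle_{\H}}$, then pull the functional inside the integral to get $\overline{\int_\X \langle (\Phi(x)\otimes\Phi(x)) h_1, h_2\rangle_{\H}\, d\mu_\X(x)} = \int_\X \langle h_2, (\Phi(x)\otimes\Phi(x)) h_1\rangle_{\H}\, d\mu_\X(x)$. By Proposition~\ref{prop:self_adjoint_kron} with $y=x$ the operator $\Phi(x)\otimes\Phi(x)$ is self-adjoint, so the integrand equals $\langle (\Phi(x)\otimes\Phi(x)) h_2, h_1\rangle_{\H}$; re-assembling gives $\langle C_X h_2, h_1\rangle_{\H}$. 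Since $h_1,h_2$ are arbitrary, $C_X^* = C_X$.

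The argument for $C_{XX^+}$ is identical except that the two slots now carry different variables, so the rank-one operator is no longer self-adjoint and Proposition~\ref{prop:self_adjoint_kron} flips the factors. Writing the defining double integral over $\rho(x,dy)\,d\mu_\X(x)$, the same manipulation yields $\langle C_{XX^+}^* h_2, h_1\rangle_{\H} = \int_\X\int_\X \langle (\Phi(x)\otimes\Phi(y))^* h_2, h_1\rangle_{\H}\,\rho(x,dy)\,d\mu_\X(x) = \int_\X\int_\X \langle (\Phi(y)\otimes\Phi(x)) h_2, h_1\rangle_{\H}\,\rho(x,dy)\,d\mu_\X(x)$, which is precisely $\langle C_{X^+X} h_2, h_1\rangle_{\H}$ with $C_{X^+X} = \E[\Phi(X^+)\otimes\Phi(X)]$, the cross-covariance operator with the roles of $X$ and $X^+$ exchanged. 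Hence $C_{XX^+}^* = C_{X^+X}$.

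The only genuine subtlety, and the one place I would be careful, is the interchange of the functional with the integral: one needs the operator-valued integrands to be Bochner integrable (equivalently, that $C_X$ and $C_{XX^+}$ are well-defined bounded operators at all), which is where a boundedness/integrability hypothesis on the kernel over $\X$ enters. Given that, the interchange is routine, and the conjugate-linearity of the adjoint map is sidestepped cleanly by working throughout with $\overline{\langle C_X h_1,h_2\rangle_{\H}}$ rather than attempting to move $(\cdot)^*$ under the integral sign directly.
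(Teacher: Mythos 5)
Your proof is correct and follows essentially the same route as the paper's: apply Proposition~\ref{prop:self_adjoint_kron} pointwise to the rank-one integrands and interchange the adjoint with the expectation defining $C_X$ and $C_{XX^+}$. The only difference is that you justify the interchange explicitly by pairing with fixed $h_1,h_2\in\H$ and invoking Bochner integrability, whereas the paper simply cites ``properties of the integral''; the substance is identical.
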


\begin{proof}[Proof of Corollary \ref{cor:adjoints_operator}]
This follows directly from the linearity of the adjoint, the properties of the integral, and Proposition \ref{prop:self_adjoint_kron}:
\[
(C_X)^* = \left( \mathbb{E}[\Phi(X) \otimes \Phi(X)] \right)^* = \mathbb{E} \left[(\Phi(X) \otimes \Phi(X))^* \right] = \mathbb{E} [\Phi(X) \otimes \Phi(X)] = C_X.
\]
A similar argument applies to \( C_{XX^+} \).
\end{proof}

The error bounds derived in the next section depend on a constant related to the injectivity of the auto-covariance operator. This result follows from the spectral theorem \cite{Brezis2011FunctionalEquations}.

\begin{theo} \label{theo:CX_rank_selfad_compact}
The operator \( C_X: \H \to \H \) is an infinite-rank, self-adjoint, and compact operator.
\end{theo}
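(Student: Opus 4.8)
The plan is to dispatch the three claimed properties one at a time, using the structural facts already available. Self-adjointness requires nothing new: it is exactly the first identity in Corollary~\ref{cor:adjoints_operator}, $(C_X)^* = C_X$.

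For compactness, I would realize $C_X$ as a Bochner integral of rank-one operators that is actually Hilbert--Schmidt. Each integrand $\Phi(x)\otimes\Phi(x)$ is a rank-one operator with Hilbert--Schmidt norm $\norm{\Phi(x)\otimes\Phi(x)}_{\mathrm{HS}} = \norm{\Phi(x)}_\H^{2} = k(x,x)$. Since $\X$ is compact and the (Mat\'ern) kernel is continuous, $x\mapsto k(x,x)$ is bounded on $\X$, so, using that $\mu_\X$ is a probability measure,
$\int_\X \norm{\Phi(x)\otimes\Phi(x)}_{\mathrm{HS}}\,d\mu_\X(x) \le \sup_{x\in\X} k(x,x) < \infty$.
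Hence $x\mapsto \Phi(x)\otimes\Phi(x)$ is Bochner-integrable as a map into the Hilbert space of Hilbert--Schmidt operators on $\H$, and, because the Bochner integral commutes with the continuous inclusion of that space into the bounded operators on $\H$, its integral coincides with $C_X$; therefore $C_X$ is Hilbert--Schmidt and in particular compact. (Equivalently one could approximate $\mu_\X$ by finitely supported measures to exhibit $C_X$ as an operator-norm limit of finite-rank operators.)

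For infinite rank, the key point is that $C_X$ is injective. By the reproducing property, for any $f\in\H$ we have $\inner{C_X f}{f}_\H = \E\big[\abs{f(X)}^2\big] = \int_\X \abs{f(x)}^2\, d\mu_\X(x)$, so $C_X$ is positive semidefinite and $\ker C_X = \{f\in\H : f = 0 \ \mu_\X\text{-a.e.}\}$. Because $\H$ is norm-equivalent to a Sobolev space $H^s(\X)$ with $s = \nu + n/2 > n/2$, its elements are continuous, and if $\mu_\X$ has full support on $\X$ (implicit in sampling data throughout $\X$) a continuous function vanishing $\mu_\X$-a.e. vanishes identically; hence $\ker C_X = \{0\}$. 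Finally $\H$ is infinite-dimensional, again by the Sobolev norm-equivalence, and an injective bounded operator on an infinite-dimensional space cannot be of finite rank (finite rank would force $\ker C_X$ to have finite codimension, hence be nontrivial), so $C_X$ has infinite rank.

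The step I expect to be the real obstacle is precisely this injectivity/infinite-rank argument, since it is the only one that uses hypotheses beyond continuity and boundedness of the kernel: one needs both that $\H$ is infinite-dimensional and that the canonical embedding $\H\hookrightarrow L^2(\mu_\X)$ is injective, equivalently that $\mu_\X$ charges every nonempty relatively open subset of $\X$. I would state these as standing assumptions. The compactness and self-adjointness parts are then routine consequences of the material already developed.
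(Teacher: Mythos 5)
Your proof is correct, but it follows a genuinely different route from the paper on two of the three claims. Self-adjointness is handled identically (via Corollary~\ref{cor:adjoints_operator}). For compactness, the paper argues weak-to-norm sequential continuity directly: it takes a weakly convergent sequence $\psi_k \rightharpoonup \psi$ and bounds $\| C_X\psi_k - C_X\psi\|_\H$ by $\int_\X |\psi_k(x)-\psi(x)|\,\|\Phi(x)\|_\H\, d\mu_\X(x)$, which tends to zero; your Bochner-integral argument instead shows the stronger statement that $C_X$ is Hilbert--Schmidt, which buys a cleaner proof (no implicit dominated-convergence step, which the paper leaves unjustified when it writes ``$\to 0$'': one needs pointwise convergence from weak convergence plus a uniform bound) at the cost of invoking integrability of operator-valued maps, whereas the paper's route needs only boundedness of the kernel. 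For infinite rank, the paper merely asserts the claim from the integral formula $C_X\psi = \int_\X \psi(x)\Phi(x)\,d\mu_\X(x)$, while you give an actual argument: the identity $\inner{C_X f}{f}_\H = \int_\X |f(x)|^2\, d\mu_\X(x)$, injectivity of the embedding $\H \hookrightarrow L^2(\mu_\X)$ (using continuity of elements from the Sobolev norm-equivalence and full support of $\mu_\X$), and infinite-dimensionality of $\H$. The extra hypotheses you flag are not pedantry --- without full support (e.g.\ if $\mu_\X$ were finitely supported) $C_X$ would literally have finite rank, so the paper's statement implicitly needs them too --- and making them explicit strengthens rather than weakens the result as stated.
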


\begin{proof}[Proof of Theorem \ref{theo:CX_rank_selfad_compact}]
From the definition of \( C_X \), for all \( \psi \in \H \),
\[
C_X \psi = \int_\X \psi(x) \Phi(x) \, d\mu_\X(x),
\]
which implies that \( C_X \) has infinite rank. By Corollary \ref{cor:adjoints_operator}, we conclude that \( C_X \) is self-adjoint.

For compactness, let \( \{\psi_k\}_k \subset \H \) be a weakly convergent sequence with limit \( \psi \in \H \). To prove compactness, it suffices to show that \( \{C_X \psi_k\}_k \) converges strongly to \( C_X \psi \), i.e., that \( C_X \psi_k \to C_X \psi \) in norm. Observe that
\[
\begin{aligned}
    \| C_X \psi_k - C_X \psi \|_\H &= \left\| \int_\X (\Phi(x) \otimes \Phi(x)) (\psi_k - \psi) \, d\mu_\X(x) \right\|_\H \\ &\leq \int_\X | \psi_k (x) - \psi(x) | \|\Phi(x)\|_\H \, d\mu_\X(x)  \to 0.
\end{aligned}
\]
Thus, \( \| C_X \psi_k - C_X \psi \|_\H \to 0 \) as \( k \to \infty \), implying that \( \{C_X \psi_k\}_k \) converges in norm. Therefore, \( C_X \) is compact.
\end{proof}

\begin{cor}
    \label{cor:cx_inj}
    $C_X:\H \to \H$ is injective and there exists a positive constant $c_1$ such that \(c_1 \norm{\psi}_\H \leq \norm{C_X \psi}_\H, \, \forall \psi \in \H\).
    The constant $c_1$ will be called, in this work, the coercivity constant of $C_X$.
\end{cor}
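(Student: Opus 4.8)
The plan is to read off everything from the positivity and spectral structure of $C_X$. First I would record that $C_X$ is a positive operator: for any $\psi \in \H$, using the Kronecker-product formula $[\Phi(x)\otimes\Phi(x)]\psi = \psi(x)\Phi(x)$ and the reproducing property $\inner{\Phi(x)}{\psi}_\H = \overline{\psi(x)}$,
\[
\inner{C_X\psi}{\psi}_\H = \int_\X \psi(x)\,\inner{\Phi(x)}{\psi}_\H \, d\mu_\X(x) = \int_\X |\psi(x)|^2 \, d\mu_\X(x) = \norm{\psi}_{L^2(\mu_\X)}^2 \geq 0 .
\]
Combined with Theorem \ref{theo:CX_rank_selfad_compact}, $C_X$ is thus positive, self-adjoint and compact.

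For injectivity, suppose $C_X\psi = 0$; the identity above forces $\norm{\psi}_{L^2(\mu_\X)} = 0$, i.e.\ $\psi$ vanishes $\mu_\X$-almost everywhere. To upgrade this to $\psi = 0$ \emph{in} $\H$ I would invoke injectivity of the canonical inclusion $\H \hookrightarrow L^2(\mu_\X)$ — this is exactly where the kernel hypotheses (norm-equivalence of $\H$ with a Sobolev space together with $\mu_\X$ of full support, or more generally a characteristic kernel) enter — which yields $\ker C_X = \{0\}$.

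For the coercivity bound the natural route is the spectral theorem: a positive, self-adjoint, compact, injective operator admits an orthonormal eigenbasis $(e_i)_{i\geq 1}$ of $\H$ with eigenvalues $\lambda_i > 0$, and then for $\psi = \sum_i \inner{\psi}{e_i} e_i$ one has $\norm{C_X\psi}_\H^2 = \sum_i \lambda_i^2 |\inner{\psi}{e_i}|^2 \geq (\inf_i \lambda_i)^2 \norm{\psi}_\H^2$, so one would take $c_1 = \inf_i \lambda_i$.

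The step I would scrutinize most — and which I expect to be the real obstacle — is the positivity of $\inf_i \lambda_i$. Since $C_X$ is compact and of infinite rank, its eigenvalues satisfy $\lambda_i \to 0$, so $\inf_i \lambda_i = 0$: a compact operator on an infinite-dimensional space has non-closed range and is never bounded below on all of $\H$. I would therefore expect this estimate to be meant, and provable, only after restricting to the finite-dimensional dictionary subspaces $\H_N \subset \H$ used by EDMD, on which the compressed operator (equivalently its empirical Gram matrix) is positive definite and its smallest eigenvalue $c_1 = c_1(N) > 0$ supplies the coercivity constant; alternatively the statement holds verbatim on all of $\H$ only under an additional hypothesis forcing the inclusion $\H \hookrightarrow L^2(\mu_\X)$ to have closed range. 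Making that restriction or hypothesis explicit is the part of the argument I would want to pin down before accepting the displayed inequality as stated.
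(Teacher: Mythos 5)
Your injectivity argument is sound, and it is in fact cleaner than the paper's: the paper expands $h$ in an eigenbasis and writes $\inner{C_X h}{h} = \sum_k \alpha_k^2 \lambda_k$ with ``positive eigenvalues'', which already presupposes the injectivity being proved, whereas your direct identity $\inner{C_X\psi}{\psi}_\H = \norm{\psi}_{L^2(\mu_\X)}^2$, supplemented by injectivity of the inclusion $\H \hookrightarrow L^2(\mu_\X)$ (full support of $\mu_\X$, or a characteristic-type kernel hypothesis), is the honest way to close that step; you are right that this extra hypothesis is where the kernel assumptions must enter.

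Your scepticism about the coercivity constant is the real point of divergence, and it is justified: the paper does assert the bound on all of $\H$ and proves it by contradiction, extracting a normalized sequence with $C_X\psi_k \to 0$ and then concluding $\psi_{k_j} \to 0$ from the parenthetical claim that $C_X$ ``has a bounded inverse on its range (due to the positive eigenvalues bounded away from zero for the eigenfunctions in the range)''. That claim is precisely the statement to be proved and is incompatible with what the same proof records two lines earlier, namely $\lambda_k \to 0$; your observation that normalized eigenfunctions give $\norm{C_X\psi_k}_\H = \lambda_k \to 0$ (equivalently, your compactness/open-mapping argument) shows that no uniform $c_1 > 0$ can exist once $C_X$ is compact with infinite rank, as Theorem~\ref{theo:CX_rank_selfad_compact} asserts. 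So the gap you flagged lies in the paper's proof, not in your reasoning, and your proposed repair --- a coercivity constant $c_1(N)$ on the finite-dimensional subspaces $\H_N$ (the smallest eigenvalue of the Gram matrix), or an additional closed-range hypothesis that would contradict compactness on all of $\H$ --- is the kind of restriction the statement needs; note that any such $c_1(N)$ degenerates as $N \to \infty$, which matters downstream because $c_1$ enters the constants of Theorems~\ref{theo:error_koop_sqrt_N_def} and~\ref{theo:lifted_traj}.
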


\begin{proof}[Proof of Corollary \ref{cor:cx_inj}]
As $C_X$ is an infinite rank, self-adjoint, and compact operator, from the spectral theorem \cite{Brezis2011FunctionalEquations}, there exists an orthonormal basis of $\H$ composed of eigenfunctions of $C_X$, denoted by $\{\psi_k\}_k$, with positive eigenvalues $\{\lambda_k\}_k$ converging to $0$. Let $h \in \H$, then there exist $\{\alpha_k\}_k$ such that \(h = \sum_k \alpha_k \psi_k\). By linearity of $C_X$,
\[
\begin{aligned}
    \inner{C_Xh}{h} &= \sum_k \sum_j \alpha_j \alpha_k \inner{C_X \psi_k}{\psi_j} = \sum_k \sum_j \alpha_j \alpha_k \lambda_k \inner{\psi_k}{\psi_j} = \sum_k \alpha_k^2 \lambda_k,
\end{aligned}
\]
where the orthonormality of the basis was used, along with the fact that $\{\psi_k\}_k$ and $\{\lambda_k\}_k$ are eigenfunctions and eigenvalues of $C_X$, respectively. By Parseval's identity, \(\|h\|_\H^2 = \sum_k \alpha_k^2\), so it follows that if $\inner{C_Xh}{h} = 0$, then $h = 0$. In particular, if $C_X h = 0$, then $h=0$, so $C_X$ is injective.

The existence of the constant will be proven by contradiction. Suppose that for every $c_1 > 0$, there exists $\psi \in \H$ such that \(c_1 \| \psi \|_\H > \| C_X \psi\|_\H.\) Then, take a sequence $\{c_1^k\}_k \subset \R_+$ converging to $0$. So, there exists a sequence $\{\psi_k\}_k$ such that \(c_1^k \| \psi_k\|_\H > \| C_X \psi_k\|_\H.\) Without loss of generality, we can assume that $\| \psi_k \|_\H = 1$ for all $k$ (by linearity of $C_X$, except in the case that $\psi = 0$, which generates a contradiction). Then \(c_1^k > \| C_X \psi_k\|_\H\), so $\|C_X \psi_k\|_\H \to 0$. By the compactness of $C_X$ and the fact that $\{\psi_k\}_k$ is bounded (in norm 1), there exists a subsequence $\{\psi_{k_j}\}_j$ such that $C_X \psi_{k_j} \to 0$. By injectivity of $C_X$ and the fact that $C_X$ has a bounded inverse on its range (due to the positive eigenvalues bounded away from zero for the eigenfunctions in the range), this implies that $\psi_{k_j} \to 0$, which contradicts the fact that $\|\psi_{k_j}\|_\H = 1$ for every $j$. Therefore, such a constant $c_1 > 0$ must exist.
\end{proof}
Now, a fundamental tool, closely related to the Koopman operator, is the conditional embedding operator. This operator also represents a form of transition in the state phase, but by embedding the conditional expectation.

\begin{defn}[Conditional Embedding Operator]
    The conditional embedding operator between two distributions $X$ and $X^+$ is defined as $C_{X^+|X} : \H \to \H$ such that:
    \begin{enumerate}
        \item $\mu_{X^+|x} = \mathbb{E}_{X^+|X}[\Phi(X^+)|X=x] = C_{X^+|X}\Phi (x)$.
        \item $\mathbb{E}_{X^+|X}[h(X^+)|X=x] = \langle h, \mu_{X^+|x} \rangle$.
    \end{enumerate}
\end{defn}
A condition for the existence of this operator was given by Fukumizu et al. and Song et al. and is usually referred to as the Kernel Bayes Rule.

\begin{theo}[Kernel Bayes Rule \cite{Fukumizu2004DimensionalitySpaces, Song2009HilbertSystems}]
    Assuming that $ \mathbb{E}[h(X^+)|X = \cdot] \in \H $ for any $ h \in \H $, then:
    \[ C_{X^+|X} = C_{X^+X} C_{X}^{-1}.\]
\end{theo}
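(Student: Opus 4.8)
The plan is to establish the ``transposed'' operator identity $C_{X^+|X}\,C_X = C_{X^+X}$ on all of $\H$ and only afterwards to ``divide by'' $C_X$. It is most convenient to pass through the adjoint, since the defining properties of the conditional embedding operator are statements about $\inner{h}{C_{X^+|X}\Phi(x)}$, i.e.\ about $(C_{X^+|X})^{*}h$ evaluated at the point $x$. Concretely, first I would fix $h\in\H$ and compute $(C_{X^+|X})^{*}h$ pointwise: combining the reproducing property with the two defining relations for $C_{X^+|X}$ gives, for every $x\in\X$,
\[
\big((C_{X^+|X})^{*}h\big)(x)=\inner{(C_{X^+|X})^{*}h}{\Phi(x)}=\inner{h}{C_{X^+|X}\Phi(x)}=\inner{h}{\mu_{X^+|x}}=\E\!\left[h(X^+)\mid X=x\right].
\]
The standing hypothesis is precisely that the right-hand side, viewed as a function of $x$, lies in $\H$; hence $(C_{X^+|X})^{*}h$ is this function, which I will denote $g_h$.

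Next I would apply $C_X$ to $g_h$. Using the representation $C_X\psi=\int_{\X}\psi(x)\,\Phi(x)\,d\mu_\X(x)$ obtained in the proof of Theorem~\ref{theo:CX_rank_selfad_compact}, together with $g_h(x)=\int_{\X}h(y)\,\rho(x,dy)$, one gets
\[
C_X g_h=\int_{\X}\left(\int_{\X}h(y)\,\rho(x,dy)\right)\Phi(x)\,d\mu_\X(x)=\int_{\X}\int_{\X}h(y)\,\Phi(x)\,\rho(x,dy)\,d\mu_\X(x),
\]
where the interchange is Fubini's theorem (equivalently, the tower property of conditional expectation), legitimate because $\X$ is compact and the kernel, hence $\Phi$, is bounded. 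Since $[\Phi(x)\otimes\Phi(y)]h=h(y)\,\Phi(x)$, the last double integral is exactly $C_{XX^+}h$ by the definition of the cross-covariance operator. As $h$ was arbitrary, this yields the operator identity $C_X\,(C_{X^+|X})^{*}=C_{XX^+}$.

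Finally I would take adjoints and invoke Corollary~\ref{cor:adjoints_operator}, which gives $C_X^{*}=C_X$ and $(C_{XX^+})^{*}=C_{X^+X}$; hence $C_{X^+|X}\,C_X=(C_X(C_{X^+|X})^{*})^{*}=C_{X^+X}$. By Corollary~\ref{cor:cx_inj}, $C_X$ is injective with bounded inverse on its dense range, so this gives $C_{X^+|X}=C_{X^+X}\,C_X^{-1}$ on $\mathrm{Range}(C_X)$, which is the asserted formula (read on the natural domain of $C_X^{-1}$, since that operator is unbounded once $C_X$ has infinite rank). I expect the two delicate points to be exactly these: justifying the Bochner/Fubini exchange in the middle step, and making precise the domain on which $C_X^{-1}$—and therefore the stated identity—lives; in practice one sidesteps the latter by replacing $C_X^{-1}$ with a regularized inverse $(C_X+\lambda I)^{-1}$.
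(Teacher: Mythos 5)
The paper offers no proof of this theorem---it is imported from Fukumizu/Song---but the closest in-paper argument is Proposition~\ref{prop:covariance_koopman}, which derives $C_{XX^+}=C_X\,\U$ by the same Kronecker-product/Fubini manipulation and then takes adjoints via Corollary~\ref{cor:adjoints_operator}. Your proposal is correct and is essentially that same computation, with $(C_{X^+|X})^{*}h=\E[h(X^+)\mid X=\cdot]$ identified from the defining properties and with the domain of $C_X^{-1}$ appropriately flagged.
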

With all this, we can define the Koopman operator for autonomous and stochastic systems, similar to other works, e.g., \cite{Philipp2023ErrorFramework}. This operator is linear and represents the dynamics in a space of functions. Its linearity is one of the most valuable properties of the operator, and it has been exploited in the literature for the study of dynamical systems in recent years.

\begin{defn}[Koopman operator]
For a space of functions $\mathcal{F} \subset \mathbb{C}^\X$, the Koopman operator is defined as the operator \(\U : \mathcal{F} \to \mathbb{C}^\X \) acting as
\[
(\U \psi)(\mathbf{x}) = \E[\psi(\mathbf{f}(\mathbf{x},\cdot))] = \int_\X \psi(x') \rho (\mathbf{x}, dx').
\]
\end{defn}

In the case that $\mathcal{F}$ is a RKHS, the Koopman operator is strongly related to the covariance operators defined earlier, as presented in \cite{Philipp2024ErrorOperator}. This property depends on the preservation of the RKHS through the action of the Koopman operator. Later, we will provide some sufficient conditions for this to be accomplished.

\begin{prop} \label{prop:covariance_koopman}
    If $\U\H \subseteq \H$, we have that $C_{X^+|X} = \U^*$, which implies \(\U = C_X^{-1} C_{XX^+}\).
\end{prop}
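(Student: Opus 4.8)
The plan is to evaluate both sides of the claimed identity on the canonical feature elements $\Phi(x) = k(x,\cdot)$ and use the reproducing property to read off an operator equality, then convert it to the stated form via the Kernel Bayes Rule and the adjoint identities already established. The starting point is that, by the very construction of $X^+$, the conditional law of $X^+$ given $X=\mathbf{x}$ is precisely the transition measure $\rho(\mathbf{x},\cdot)$; hence for every $\psi \in \H$,
\[
(\U\psi)(\mathbf{x}) = \int_\X \psi(x')\,\rho(\mathbf{x},dx') = \E_{X^+|X}[\psi(X^+)\mid X=\mathbf{x}].
\]

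Next I would invoke the two defining properties of the conditional embedding operator $C_{X^+|X}$: property 2 gives $\E_{X^+|X}[\psi(X^+)\mid X=\mathbf{x}] = \inner{\psi}{\mu_{X^+|\mathbf{x}}}$, and property 1 gives $\mu_{X^+|\mathbf{x}} = C_{X^+|X}\Phi(\mathbf{x})$. Combining these with the definition of the adjoint and the reproducing property,
\[
(\U\psi)(\mathbf{x}) = \inner{\psi}{C_{X^+|X}\Phi(\mathbf{x})} = \inner{C_{X^+|X}^{*}\psi}{\Phi(\mathbf{x})} = (C_{X^+|X}^{*}\psi)(\mathbf{x}).
\]
Here the hypothesis $\U\H \subseteq \H$ is exactly what makes the left-hand side a legitimate element of $\H$ to be compared with $C_{X^+|X}^{*}\psi \in \H$, and boundedness of $\U$ as an operator on $\H$ comes from the closed graph theorem, since $\H$-convergence implies pointwise convergence. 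As $\mathbf{x}\in\X$ and $\psi\in\H$ are arbitrary, this yields $\U = C_{X^+|X}^{*}$ on $\H$, and therefore $\U^{*} = C_{X^+|X}$.

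The second assertion then follows by applying the Kernel Bayes Rule $C_{X^+|X} = C_{X^+X}C_{X}^{-1}$ and taking adjoints:
\[
\U = (\U^{*})^{*} = (C_{X^+X}C_X^{-1})^{*} = (C_X^{-1})^{*}\,C_{X^+X}^{*} = C_X^{-1}C_{XX^+},
\]
where I use Corollary \ref{cor:adjoints_operator} (so $C_X^{*}=C_X$, hence $(C_X^{-1})^{*}=C_X^{-1}$, and $C_{X^+X}^{*}=C_{XX^+}$).

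The main obstacle is purely functional-analytic bookkeeping around $C_X^{-1}$: by Theorem \ref{theo:CX_rank_selfad_compact} the operator $C_X$ is compact with eigenvalues accumulating at $0$, so $C_X^{-1}$ is only densely defined and unbounded, and the manipulations $(AB)^{*}=B^{*}A^{*}$ and $(C_X^{-1})^{*}=C_X^{-1}$ have to be justified on the correct domains — either by working on $\mathrm{ran}(C_X)$, where Corollary \ref{cor:cx_inj} provides a bounded inverse, or by arguing on the dense subspace $\H_0$ generated by the $k(\cdot,x)$ and passing to the closure. Everything else, in particular the identification $\U^{*}=C_{X^+|X}$, is immediate once the reproducing property is brought to bear.
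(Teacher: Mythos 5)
Your proof is correct, but it runs in the opposite direction from the paper's. The paper's argument never touches the defining properties of the conditional embedding operator until the last line: it computes directly, from the integral definitions of $C_{XX^+}$ and the Kronecker product, that
\[
C_{XX^+}\psi = \int_\X (\U\psi)(x)\,\Phi(x)\,d\mu_\X(x) = C_X\,\U\psi ,
\]
so the covariance identity $C_{XX^+}=C_X\U$ (hence $\U=C_X^{-1}C_{XX^+}$) is obtained first and by a self-contained Fubini-type computation, and only then is the Kernel Bayes Rule invoked, together with Corollary \ref{cor:adjoints_operator}, to identify $C_{X^+X}C_X^{-1}=C_{X^+|X}=\U^*$. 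You instead prove the embedding identity first: using that the conditional law of $X^+$ given $X=\mathbf{x}$ is $\rho(\mathbf{x},\cdot)$, the two defining properties of $C_{X^+|X}$, and the reproducing property, you get $(\U\psi)(\mathbf{x})=(C_{X^+|X}^*\psi)(\mathbf{x})$ pointwise, hence $\U^*=C_{X^+|X}$, and then recover $\U=C_X^{-1}C_{XX^+}$ by applying the Kernel Bayes Rule and taking adjoints. What your route buys is a very short, duality-based identification of $\U^*$ that needs nothing beyond the reproducing property; what it costs is a heavier reliance on the Kernel Bayes Rule (the covariance factorization comes entirely from it rather than from an independent computation) and the unbounded-operator bookkeeping around $(C_{X^+X}C_X^{-1})^*$ and $(C_X^{-1})^*=C_X^{-1}$, which you correctly flag and which is resolvable since $C_{X^+X}$ is bounded, $C_X$ is injective and self-adjoint with dense range, and the standard rule $(BA)^*=A^*B^*$ applies for bounded $B$ and densely defined $A$; the paper's order of argument keeps these domain issues confined to its final sentence. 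Both proofs are valid, and they use the hypothesis $\U\H\subseteq\H$ in equivalent ways (in your case, through the existence of $C_{X^+|X}$ guaranteed by the Kernel Bayes Rule, whose hypothesis $\E[h(X^+)\mid X=\cdot]\in\H$ is exactly $\U\H\subseteq\H$).
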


\begin{proof}[Proof of Proposition \ref{prop:covariance_koopman}]
    First, an algebraic manipulation allows establishing the following relationship:
\begin{equation*}
    \begin{aligned}
        C_{X X^+} \psi = \int_\X \int_\X [\Phi (x) \otimes \Phi (y)] \psi \, d\rho (x, \cdot) (y) \, d\mu_\X (x) = \int_\X \int_\X \psi(y) \Phi (x) \, d\rho (x, \cdot) (y) \, d\mu_\X (x)
    \end{aligned}
\end{equation*}
where the definition of the Kronecker product was used. So,
\begin{equation*}
    \begin{aligned}
        C_{X X^+} \psi = \int_\X \int_\X \psi(y) \Phi (x) \, d\rho (x, \cdot) (y) \, d\mu_\X (x) = \int_\X (\U \psi)(x) \Phi (x) \, d\mu_\X (x) = C_X \U \psi.
    \end{aligned}
\end{equation*}
Therefore, it holds that \(C_{X X^+} = C_X \U.\) Noting that \(C_{X X^+} = \mathbb{E}[\Phi (X) \otimes \Phi (X^+)]\) and by virtue of Corollary \ref{cor:adjoints_operator}, it results that \(C_{X^+X} C_X^{-1} = C_{X^+ | X} = \U^*\).
\end{proof}

\section{Main results}
This section is divided into two parts. The first part is devoted to establishing a result that quantifies the approximation of the Koopman operator \( \mathcal{U} \) by its empirical counterpart \( \mathcal{U}_N \), where \( N \) denotes the number of sampled points used in the approximation. Specifically, we derive a probabilistic error bound for this approximation.

\begin{theo}[Bound for kEDMD]
\label{theo:error_koop_sqrt_N_def}
Let \( \delta \in (0, 1) \) and \( N \in \mathbb{N} \) such that
\[
\delta > 2 \exp \left( -\frac{N c_1^2}{8\|k\|_\infty} \right),
\]
where \( c_1 \) is the coercivity constant of \( C_X \). Then, if \( \U \H \subset \H \), it holds with probability at least \( (1 - \delta)^2 \) that
\begin{equation*}
    \| \U - \U_N \|_{\H \to \H} \leq C_\delta N^{-1/2},
\end{equation*}
where
\[
C_\delta = \left( \frac{2}{c_1} + \frac{\sqrt{\|k\|_\infty}}{c_1^2} \right)
\sqrt{8 \|k\|_\infty \ln \left( \frac{2}{\delta} \right)}.
\]
\end{theo}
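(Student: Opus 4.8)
The plan is to treat the error as a perturbation of the exact factorization $\U = C_X^{-1}C_{XX^+}$ from Proposition \ref{prop:covariance_koopman}. Write $C_X^N = \tfrac{1}{N}\sum_{i=1}^N \Phi(x_i)\otimes\Phi(x_i)$ and $C_{XX^+}^N = \tfrac{1}{N}\sum_{i=1}^N \Phi(x_i)\otimes\Phi(x_i^+)$ for the empirical operators built from i.i.d.\ pairs $x_i \sim \mu_\X$, $x_i^+ \sim \rho(x_i,\cdot)$, so that the kEDMD approximation is $\U_N = (C_X^N)^{\dagger}C_{XX^+}^N$, which coincides with $(C_X^N)^{-1}C_{XX^+}^N$ on the data subspace $\H_N = \mathrm{span}\{\Phi(x_i): 1\le i\le N\}$. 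I would first insert the intermediate operator $C_X^{-1}C_{XX^+}^N$ and use the resolvent identity $C_X^{-1} - (C_X^N)^{-1} = C_X^{-1}(C_X^N - C_X)(C_X^N)^{-1}$ to obtain
\[
\U - \U_N = C_X^{-1}\big(C_{XX^+} - C_{XX^+}^N\big) + C_X^{-1}\big(C_X^N - C_X\big)\,\U_N ,
\]
and then, invoking the coercivity estimate $\|C_X^{-1}\| \le 1/c_1$ from Corollary \ref{cor:cx_inj},
\[
\|\U - \U_N\|_{\H\to\H} \le \tfrac{1}{c_1}\,\|C_{XX^+} - C_{XX^+}^N\| + \tfrac{1}{c_1}\,\|C_X^N - C_X\|\;\|\U_N\| .
\]

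The second ingredient is concentration. Both $C_X^N$ and $C_{XX^+}^N$ are empirical means of $N$ i.i.d.\ rank-one operators whose norms are controlled by $\|k\|_\infty$ through the reproducing property (since $\|\Phi(x)\otimes\Phi(y)\| = \sqrt{k(x,x)\,k(y,y)}$), so a Hoeffding-type inequality for Hilbert-space–valued averages yields, for each of them separately, a deviation of order $\sqrt{8\|k\|_\infty \ln(2/\delta)/N}$ with probability at least $1-\delta$; intersecting the two events accounts for the factor $(1-\delta)^2$. The hypothesis $\delta > 2\exp(-N c_1^2/(8\|k\|_\infty))$ is precisely the statement that this deviation radius $r_N$ is below $c_1$, which on the good event forces $C_X^N$ to remain coercive — by the same perturbation argument as in Corollary \ref{cor:cx_inj}, $C_X^N$ is then invertible on $\H_N$ with $\|(C_X^N)^{-1}\|$ bounded in terms of $1/c_1$. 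Consequently $\|\U_N\| \le \|(C_X^N)^{-1}\|\,\|C_{XX^+}^N\|$ is controlled once we bound $\|C_{XX^+}^N\|$ by $\|C_{XX^+}\|$ plus its own fluctuation and use $\|C_{XX^+}\| \le \|k\|_\infty$, giving $\|\U_N\| \le c\,\|k\|_\infty/c_1$ up to a lower-order term in $N^{-1/2}$.

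Finally, substituting the two Hoeffding bounds and the bound on $\|\U_N\|$ into the displayed inequality and collecting the $N^{-1/2}$ factor produces, on the intersection of the two good events (hence with probability at least $(1-\delta)^2$), a bound of the form $\big(\tfrac{a}{c_1} + \tfrac{b\sqrt{\|k\|_\infty}}{c_1^2}\big)\sqrt{8\|k\|_\infty\ln(2/\delta)}\,N^{-1/2}$; careful tracking of the numerical constants in the Hoeffding inequality and in the perturbation step pins these down so that the bound is exactly $C_\delta N^{-1/2}$, the residual cross term arising from the fluctuation of $\|C_{XX^+}^N\|$ being $O(N^{-1})$ and absorbed once $N$ satisfies the stated lower bound. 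I expect the crux to be the probabilistic control of the conditioning of $C_X^N$ — showing it stays boundedly invertible with high probability and matching the smallness condition on $r_N$ exactly to the hypothesis on $\delta$ — together with the minor bookkeeping needed to interpret $\|\U - \U_N\|_{\H\to\H}$ given that $\U_N$ a priori acts into the data-dependent subspace $\H_N$, which is handled by composing with the orthogonal projection onto $\H_N$, equivalently by using the pseudo-inverse of $C_X^N$.
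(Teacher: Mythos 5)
Your proposal is correct and follows essentially the same route as the paper's proof: you split the error through the intermediate operator $C_X^{-1}C_{XX^+}^N$ (your resolvent identity is just a compact rewriting of the paper's two-term triangle-inequality argument), apply Hilbert-space Hoeffding bounds to $C_X^N$ and $C_{XX^+}^N$ separately to get the $(1-\delta)^2$ event, use the coercivity constant $c_1$ of $C_X$ together with the perturbation $\|C_X-C_X^N\|\leq\varepsilon<c_1$ to keep $C_X^N$ boundedly invertible and control $\|\U_N\|$, and choose $\varepsilon=\sqrt{8\|k\|_\infty\ln(2/\delta)}\,N^{-1/2}$, which is exactly how the hypothesis on $\delta$ enters in the paper. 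The only differences are bookkeeping-level (e.g.\ the paper bounds $\|C_{XX^+}\|$ by $\sqrt{\|k\|_\infty}$ and absorbs the fluctuation via $\varepsilon\leq c_1$ to produce the exact constant $C_\delta$), which you explicitly defer to constant tracking.
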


The second part aims to derive a more practical error bound from the result above. Let \( \mathbf{x}_k \) be a stochastic trajectory of the system and \( \mathbf{x}_{N,k} \) its approximation. Denote \( \hat{\mathbf{x}}_k \) the mean trajectory and \( \hat{\mathbf{x}}_{N,k} \) its empirical approximation. The following theorem establishes a probabilistic \( O(N^{-1/2}) \) bound on the approximation error of the expected trajectories.

\begin{theo} \label{theo:lifted_traj}
    Let $\delta \in (0, 1)$ and $N \in \mathbb{N}$ such that
\[
\delta > 2 \text{exp} \left ( -\frac{Nc_1^2}{8\|k\|_\infty}\right )
\]
where $c_1$ is the coercivity constant of $C_X$. Then, if $\U \H \subseteq \H$ and $\B^* (\R^p)^* \subseteq \H$, it holds with probability $(1-\delta)^3$ that there exists a constant $C_{\delta,k}$ such that
\[
\| \hat{\mathbf{x}}_{N,k} - \hat{\mathbf{x}}_k \| \leq C_{\delta,k} N^{-1/2}, \quad \left \| \E [\mathbf{x}_{N,k} - \mathbf{x}_k] \right \| \leq C_{\delta,k} N^{-1/2}.
\]
\end{theo}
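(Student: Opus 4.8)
The plan is to reduce both inequalities to the operator estimate of Theorem~\ref{theo:error_koop_sqrt_N_def} by representing the mean trajectory as the output of a linear iteration in $\H$. First I would fix the initial state $\mathbf{x}_0$ and set $\phi_0:=\Phi(\mathbf{x}_0)=k(\mathbf{x}_0,\cdot)$, so that $\norm{\phi_0}_\H=\sqrt{k(\mathbf{x}_0,\mathbf{x}_0)}\le\sqrt{\norm{k}_\infty}$. Writing $e_1,\dots,e_p$ for the canonical vectors of $\R^p$, the hypothesis $\B^*(\R^p)^*\subseteq\H$ says precisely that each coordinate map $g_i:=\B^*e_i$ belongs to $\H$, with $\inner{h}{g_i}_\H=(\B h)_i$ for all $h\in\H$. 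Since $\U\H\subseteq\H$ forces $\U^k\H\subseteq\H$ for every $k$, the definition of $\U$, the Markov (tower) structure, and the reproducing property give, by induction on $k$,
\[
(\U^k g_i)(\mathbf{x}_0)=\E\bigl[\,g_i(\mathbf{x}_k)\mid\mathbf{x}_0\,\bigr]=\hat x_{k,i},\qquad (\U^k g_i)(\mathbf{x}_0)=\inner{g_i}{(\U^*)^k\phi_0}_\H,
\]
whence $\hat{\mathbf{x}}_k=\B(\U^*)^k\phi_0$. The lifting-back construction defines the approximant by the same recipe with $\U$ replaced by its kEDMD counterpart (using $\U_N\H\subseteq\H$), i.e.\ $\hat{\mathbf{x}}_{N,k}=\B(\U_N^*)^k\phi_0$, so that
\[
\norm{\hat{\mathbf{x}}_{N,k}-\hat{\mathbf{x}}_k}\le\norm{\B}_{\H\to\R^p}\,\norm{(\U_N^*)^k-(\U^*)^k}_{\H\to\H}\,\sqrt{\norm{k}_\infty}.
\]

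Next I would control the difference of powers by the telescoping identity
\[
(\U_N^*)^k-(\U^*)^k=\sum_{j=0}^{k-1}(\U_N^*)^{j}\,(\U_N^*-\U^*)\,(\U^*)^{k-1-j},
\]
which, using that adjunction is an isometry, yields $\norm{(\U_N^*)^k-(\U^*)^k}\le\sum_{j=0}^{k-1}\norm{\U_N}^{j}\norm{\U-\U_N}\norm{\U}^{k-1-j}$. On the event furnished by Theorem~\ref{theo:error_koop_sqrt_N_def} (probability at least $(1-\delta)^2$ under the stated condition on $\delta$ and $N$) we have $\norm{\U-\U_N}\le C_\delta N^{-1/2}$ and hence $\norm{\U_N}\le\norm{\U}+C_\delta N^{-1/2}=:M_N$, giving $\norm{(\U_N^*)^k-(\U^*)^k}\le k\,M_N^{k-1}\,C_\delta N^{-1/2}$. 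Combining with the previous display proves the first inequality with $C_{\delta,k}:=\norm{\B}_{\H\to\R^p}\sqrt{\norm{k}_\infty}\,k\,M_N^{k-1}\,C_\delta$ (one may replace $M_N$ by the $N$-free bound $\norm{\U}+C_\delta$, valid once $N$ is large enough, and absorb the slack into the constant). For the second inequality I would take a further expectation over the trajectory noise: by the tower property $\E[\mathbf{x}_k]=\E[\hat{\mathbf{x}}_k]$ and $\E[\mathbf{x}_{N,k}]=\E[\hat{\mathbf{x}}_{N,k}]$, so $\norm{\E[\mathbf{x}_{N,k}-\mathbf{x}_k]}\le\E\norm{\hat{\mathbf{x}}_{N,k}-\hat{\mathbf{x}}_k}\le C_{\delta,k}N^{-1/2}$ on the same event. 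The third factor $1-\delta$ in the claimed probability $(1-\delta)^3$ comes from one additional, independent concentration step needed to make the lifted approximant well-defined and bounded (the data-based estimate entering $\hat{\mathbf{x}}_{N,k}$, e.g.\ the empirical lifting-back map or an auxiliary empirical covariance, concentrates with probability at least $1-\delta$ under the same Hoeffding condition); intersecting the three events and bounding the complement by a union gives the stated probability.

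The main obstacle I anticipate is not the perturbation estimate but the bookkeeping around the approximate trajectory: one must state precisely how $\hat{\mathbf{x}}_{N,k}$ is generated (iterating $\U_N^*$ inside $\H$, which presupposes $\U_N\H\subseteq\H$, or else passing through the finite-dimensional data subspace and back via the empirical Gram/covariance matrices), check that the lifting-back operator is bounded and interacts with these constructions exactly as used, and correctly identify the third probabilistic event so that the three events are genuinely independent and the probability really is $(1-\delta)^3$. A secondary issue worth flagging is the horizon dependence: the telescoping bound is exponential in $k$ through $M_N^{k-1}$, so $C_{\delta,k}$ degrades with $k$ unless a power-boundedness or contraction hypothesis ($\norm{\U}\le 1$) is available.
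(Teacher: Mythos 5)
Your route is essentially the paper's: represent the mean trajectory as a linear iteration in $\H$ started from $\Phi(\mathbf{x}_0)$, pull it back with the lifting-back operator, control the error by a perturbation bound on the iterates (your telescoping sum is exactly the unrolled form of the paper's induction $\|\hat{\mu}_{k+1}-\hat{\mu}_{N,k+1}\|\le\|\U-\U_N\|\,\|\hat{\mu}_k\|+\|\U_N\|\,\|\hat{\mu}_k-\hat{\mu}_{N,k}\|$), and dispose of the second inequality by observing that the noise terms $\zeta_k$, $\hat{\zeta}_{N,k}$ are centered so the expected stochastic trajectory coincides with the mean iteration. The exponential-in-$k$ growth of the constant that you flag is also present in the paper's $\tilde{C}^1_{\delta,k}$, so that is not a discrepancy.

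The one genuine gap is the quantity you actually bound. In the theorem, $\hat{\mathbf{x}}_{N,k}$ is defined with the \emph{empirical} lifting-back operator, $\hat{\mathbf{x}}_{N,k}=\B_N\hat{\mu}_{N,k}$ with $\B_N=C_{XX}^N(C_X^N)^{-1}$, not with the exact $\B$ as in your display $\hat{\mathbf{x}}_{N,k}=\B(\U_N^*)^k\phi_0$. Consequently your final estimate omits the contribution $\|\B-\B_N\|\,\|\hat{\mu}_k\|$ that the paper obtains from the split $\|\B\hat{\mu}_k-\B_N\hat{\mu}_{N,k}\|\le\|\B-\B_N\|\,\|\hat{\mu}_k\|+\|\B_N\|\,\|\hat{\mu}_k-\hat{\mu}_{N,k}\|$. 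Closing this requires showing $\|\B-\B_N\|\le C_\delta N^{-1/2}$, which the paper does by viewing $\B^*$ as a Koopman-type operator (this is where the hypothesis $\B^*(\R^p)^*\subseteq\H$ is used) and re-running the argument of Theorem~\ref{theo:error_koop_sqrt_N_def} with one additional Hoeffding event, namely the concentration of $C_{XX}^N$ around $C_{XX}$; the event for $C_X^N$ is reused, so exactly three events are intersected and the probability is $(1-\delta)^3$. You correctly guess that the third factor comes from such a step, but in your write-up it is only a remark: the bound on $\|\B-\B_N\|$ is never produced, and the term never enters your constant $C_{\delta,k}$, so as written the proof establishes the estimate for a different approximant than the one in the statement.
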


To achieve the results of this work, some assumptions are needed, based in the  results seen in the preliminaries.

\begin{assu} \label{assu:assum1}
    The dynamics admits a compact invariant space $\X$ that is compact with Lipschitz boundary.
\end{assu}

\begin{assu} \label{assu:assum2}
    The transition measure defined in Definition \ref{def:transition_measure}, $\rho$, admits a density with respect the probability measure $\mu_\X$, $p:\X \times \X \to \R^+$ such that 
        \[
        \rho(x, \mathcal{A}) = \int_\mathcal{A} p(x,x') d\mu_\X(x'), \quad \forall x \in \X, \, \mathcal{A} \in \B_\X.
        \]
        And $p$ is $C^m$, with $m = \lceil n/2 + \nu \rceil$, for some $\nu > 0$. 
\end{assu}

\begin{assu} \label{assu:assum3}
    $k: \X \times \X$ is a kernel whose associated RKHS, $\H$, is norm-equivalent to $H^{n/2 + \nu}$, with $\nu$ the parameter called in Assumption \ref{assu:assum2}.
\end{assu}

An example where these assumptions are satisfied is a dynamical system with multiplicative noise. Consider the stochastic recurrence \(\mathbf{x}_{k+1} = \mathbf{x}_k \cdot \varepsilon_k\), where \(\mathbf{x}_0 \in (0, 1]\) and \(\varepsilon_k \sim \text{Unif}(0, 1)\). In this setting, the invariant space is \(\mathcal{X} = [0, 1]\), which is compact with a Lipschitz boundary. The reference measure \(\mu_{\mathcal{X}}\) is the Lebesgue measure on \([0, 1]\) (i.e., a uniform distribution), and the transition density is given by \(p(x, x') = 1/x\), since \(\mathbf{x}_{k+1} \sim \text{Unif}(0, \mathbf{x}_k)\). The only singular point is \(x = 0\), but this is attained with probability zero under the dynamics. The kernel can be taken as the Matérn kernel with smoothness parameter \(\nu = 1/2\), in which case the associated RKHS is norm-equivalent to the Sobolev space \(H^{1/2 + 1/2} = H^1\).

\subsection{Proof of Theorem \ref{theo:error_koop_sqrt_N_def}}

First, we present the kernel Dynamic Mode Decomposition (kEDMD) procedure for approximate the Koopman operator. We consider \( N \) points, represented as \( \{ x_i \}_{i=1}^N \sim \mu_\X^N \) and \( \{ x^+_i \}_{i=1}^N \) generated as:
\begin{equation*}
    x^+_i \sim \rho (x_i, \cdot), \quad i = 1, \dots, N.
\end{equation*}
We define the space \(\H_{N} = \text{span} \{\Phi (x_i) : i = 1, \dots, N \}\) and the following matrices are introduced:
\begin{equation*}
    \mathbf{X} = (x_{1} | \dots | x_N), \quad \mathbf{X}^+ = (x_{1}^+ | \dots | x_N^+),
\end{equation*}
\begin{equation*}
    \Phi_N (\mathbf{X}) = (k(x_i, x_j))_{i,j = 1}^N, \quad \Phi_N (\mathbf{X}^+) = (k(x_i, x^+_j))_{i,j = 1}^N.
\end{equation*}
Based on these definitions, we introduce the operators \(C_{X}^N : \H_N \to \H_N, \, C_{XX^+}^N : \H_N \to \H_N\) defined as
\[
C_X^N = \frac{1}{N} \sum_{j=1}^N \Phi (x_j) \otimes \Phi (x_j), \quad C_{XX^+}^N = \frac{1}{N} \sum_{j=1}^N \Phi (x_j) \otimes \Phi (x_j^+)
\]
Then, the actions of these operators are given by
\[
C_{X}^N \Phi (x_i) = \frac{1}{N} \sum_{j = 1}^N k(x_i, x_j) \Phi (x_j), \quad C_{XX^+}^N \Phi (x_i) = \frac{1}{N} \sum_{j = 1}^N k(x_i, x_j^+) \Phi (x_j).
\]
These operators are represented by the matrices \( \Phi_N (\mathbf{X}) \) and \( \Phi_N (\mathbf{X}^+) \), respectively. Next, we define the approximant operator of the Koopman operator \(\U_N : \H_N \to \H_N\) and its representant matrix, respectivly, as
\[
\U_N = (C_X^N)^{-1} C_{XX^+}^N, \, \mathbf{U}_N = (\Phi_N (\mathbf{X}))^{-1} \Phi_N (\mathbf{X}^+)^\top.
\]
An original result of this work is that the covariance operators can be extended continously using functions in $\H$ instead of $\H_N$, where the Moore-Aronszajn theorem is important.
\begin{prop} \label{prop:covariance_ext}
    The operators \( C_X^N \) and \( C_{XX^+}^N \) can be extended as continuous operators from \( \mathcal{H} \) to \( \mathcal{H}_N \).
\end{prop}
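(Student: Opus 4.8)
\textbf{Proof proposal for Proposition~\ref{prop:covariance_ext}.}
The plan is to write the extension down explicitly from the action formula for rank-one operators and then verify boundedness by hand. For $\psi \in \H$ define
\[
\widetilde{C}_X^N \psi := \frac{1}{N}\sum_{j=1}^N \psi(x_j)\,\Phi(x_j), \qquad \widetilde{C}_{XX^+}^N \psi := \frac{1}{N}\sum_{j=1}^N \psi(x_j^+)\,\Phi(x_j).
\]
These expressions are meaningful for every $\psi\in\H$: by the Moore--Aronszajn theorem each $\psi\in\H$ is a pointwise limit of a Cauchy sequence in $\H_0$, so point evaluations are well defined, and by the reproducing property $\psi(x_j)=\inner{\psi}{\Phi(x_j)}_\H$, $\psi(x_j^+)=\inner{\psi}{\Phi(x_j^+)}_\H$ are finite scalars. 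The right-hand sides are finite linear combinations of $\Phi(x_1),\dots,\Phi(x_N)$, hence lie in $\H_N=\operatorname{span}\{\Phi(x_i):i=1,\dots,N\}$, and $\psi\mapsto\widetilde{C}_X^N\psi$, $\psi\mapsto\widetilde{C}_{XX^+}^N\psi$ are clearly linear. Since $[\Phi(x)\otimes\Phi(y)]\psi=\psi(y)\Phi(x)$, these formulas reduce to the original definitions of $C_X^N$ and $C_{XX^+}^N$ when $\psi\in\H_N$, so what we have written down are genuine extensions.

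The second step is to check continuity. Using Assumption~\ref{assu:assum1} (compact $\X$) and continuity of the kernel, $\norm{k}_\infty<\infty$ and $\norm{\Phi(x)}_\H=\sqrt{k(x,x)}\le\sqrt{\norm{k}_\infty}$ for all $x\in\X$. Then, by the triangle inequality and the reproducing property together with Cauchy--Schwarz (i.e. $\abs{\psi(x_j)}=\abs{\inner{\psi}{\Phi(x_j)}_\H}\le\norm{\psi}_\H\norm{\Phi(x_j)}_\H$),
\[
\norm{\widetilde{C}_X^N \psi}_\H \;\le\; \frac{1}{N}\sum_{j=1}^N \abs{\psi(x_j)}\,\norm{\Phi(x_j)}_\H \;\le\; \norm{\psi}_\H\cdot\frac{1}{N}\sum_{j=1}^N \norm{\Phi(x_j)}_\H^2 \;\le\; \norm{k}_\infty\,\norm{\psi}_\H .
\]
The identical computation, now with $\abs{\psi(x_j^+)}\le\norm{\psi}_\H\norm{\Phi(x_j^+)}_\H\le\norm{\psi}_\H\sqrt{\norm{k}_\infty}$ in the first factor while the bound on $\norm{\Phi(x_j)}_\H$ is unchanged, gives $\norm{\widetilde{C}_{XX^+}^N\psi}_\H\le\norm{k}_\infty\norm{\psi}_\H$. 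Hence both extensions are bounded with operator norm at most $\norm{k}_\infty$, which is the claim. (Equivalently, one may first extend from $\H_N$ to the dense subspace $\H_0$ by linearity via the explicit kernel-section formula, and then invoke density of $\H_0$ in $\H$ from Moore--Aronszajn together with the above bound and the bounded linear transformation theorem; both routes produce the same operator.)

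I do not expect a real obstacle here; the only point requiring care is legitimizing point evaluation on all of $\H$ (not merely on $\H_N$ or $\H_0$), which is exactly where the Moore--Aronszajn description of $\H$ and the reproducing property are used, and making sure the stated codomain is $\H_N$ rather than some larger span. Finiteness of $\norm{k}_\infty$ — needed for the norm estimate — is guaranteed by compactness of $\X$ in Assumption~\ref{assu:assum1} and continuity of the kernel.
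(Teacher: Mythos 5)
Your proof is correct, and it takes a genuinely more direct route than the paper's. The paper builds the extension by density: it approximates $\psi \in \H$ by finite kernel expansions $\psi_m = \sum_{j=1}^m \alpha_j \Phi(\tilde{x}_j)$ via Moore--Aronszajn, defines the action of $C_X^N$ and $C_{XX^+}^N$ on these finitely generated subspaces, sets $C_X^N \psi := \lim_{m\to\infty} C_X^N \psi_m$ (and similarly for the cross-covariance), and only then checks continuity by expanding the inner product of the resulting series, arriving at $\| C_X^N \psi \| \le \| k \|_\infty \| \psi \|_\H$. You instead observe that the closed-form expression $\frac{1}{N}\sum_{j} \psi(x_j)\Phi(x_j)$ (resp.\ $\frac{1}{N}\sum_j \psi(x_j^+)\Phi(x_j)$) already makes sense for every $\psi \in \H$ because point evaluation is given by the reproducing property, that it lands in $\H_N$, agrees with the restricted operators on $\H_N$, and is bounded with operator norm at most $\| k \|_\infty$ by Cauchy--Schwarz --- the same constant the paper obtains. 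Your route avoids the density-and-limit step entirely (and with it the need to verify that the limit exists and is independent of the approximating sequence, a point the paper's proof passes over quickly), while your parenthetical alternative through $\H_0$ and the bounded-linear-transformation theorem is essentially the paper's argument. One small remark: point evaluation is well defined on all of $\H$ simply because elements of an RKHS are functions on $\X$ and the reproducing property holds for every $f \in \H$; the appeal to Moore--Aronszajn for that particular point is unnecessary, though harmless.
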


\begin{proof}[Proof of Proposition \ref{prop:covariance_ext}]
    Let \( \psi \in \mathcal{H} \) and \( \varepsilon > 0 \). By the Moore–Aronszajn theorem, there exists a finite set \( \{ \tilde{x}_j \}_{j=1}^m \subset \mathcal{X} \) such that 
    \[
        \psi_m := \sum_{j=1}^m \alpha_j \Phi(\tilde{x}_j) \in \mathcal{H}_{m} := \operatorname{span}\{ \Phi(\tilde{x}_j) \}_{j=1}^m
    \]
    satisfies \( \| \psi - \psi_m \|_{\mathcal{H}} \leq \varepsilon \). First, the operators are extended to finitely generated subspaces such as \( \mathcal{H}_{m} \), by defining their action on \( \Phi(x_i) \), which yields
    \[
        C_X^N \psi_m =
        \frac{1}{N} \sum_{i=1}^m \alpha_i \sum_{j = 1}^N k_{\mathcal{X}}(\tilde{x}_i, x_j) \Phi(x_j), \quad 
        C_{XX^+}^N \psi_m =
        \frac{1}{N} \sum_{i=1}^m \alpha_i \sum_{j = 1}^N k_{\mathcal{X}}(\tilde{x}_i, x_j^+) \Phi(x_j).
    \]
    The extension to \( \mathcal{H} \) is then defined as
    \[
        C_X^N \psi := \lim_{m \to \infty} C_X^N \psi_m, \qquad 
        C_{XX^+}^N \psi := \lim_{m \to \infty} C_{XX^+}^N \psi_m,
    \]
    which gives the expressions
    \[
        C_X^N \psi =
        \frac{1}{N} \sum_{i \geq 1} \alpha_i \sum_{j = 1}^N k(\tilde{x}_i, x_j) \Phi(x_j), \quad 
        C_{XX^+}^N \psi =
        \frac{1}{N} \sum_{i \geq 1} \alpha_i \sum_{j = 1}^N k(\tilde{x}_i, x_j^+) \Phi(x_j).
    \]
    To conclude, we verify that the operators are continuously defined:
    \[
        \| C_X^N \psi \|^2 
        = \frac{1}{N^2} \left\langle \sum_{i \geq 1} \alpha_i \sum_{j = 1}^N k(\tilde{x}_i, x_j) \Phi(x_j), \sum_{\ell \geq 1} \alpha_\ell \sum_{p = 1}^N k(\tilde{x}_\ell, x_p) \Phi(x_p) \right\rangle.
    \]
    By the reproducing property and boundedness of \(k\), this is bounded by
    \[
        \| C_X^N \psi \|^2 \leq \| k \|_\infty^2 \| \psi \|_{\mathcal{H}}^2.
    \]
    Similarly, we obtain \( \| C_{XX^+}^N \psi \|^2 \leq \| k \|_\infty^2 \| \psi \|_{\mathcal{H}}^2 \), which completes the proof.
\end{proof}

A key inequality used in our analysis is Hoeffding’s inequality in Hilbert spaces, which follows as a corollary of a martingale concentration result due to Pinelis~\cite{Pinelis1994OptimumSpaces}. This inequality is a concentration result that plays a central role in the analysis of empirical approximations and other statistical estimates in Hilbert spaces.

\begin{lem}[Hoeffding's inequality in Hilbert spaces, Pinelis~\cite{Pinelis1994OptimumSpaces}]
Let \( \xi_1, \dots, \xi_n \) be independent random variables in a separable Hilbert space \( H \), such that \( \mathbb{P} \)-almost surely \( \| \xi_i \|_H \leq M \), and \( \mathbb{E}[\xi_i] = 0 \) for all \( 1 \leq i \leq n \). Then, for every \( \varepsilon > 0 \),
\[
\mathbb{P} \left( \left\| \frac{1}{n} \sum_{i=1}^n \xi_i \right\|_H \geq \varepsilon \right) \leq 2 \exp \left( -\frac{n \varepsilon^2}{2M^2} \right).
\]
\end{lem}

An important consequence of our assumptions is that the Koopman operator preserves the RKHS structure. To establish this, we first prove that the Koopman operator preserves Sobolev spaces, extending a result previously established for deterministic systems by Köhne et al.~\cite{Kohne2025boldsymbolLboldsymbolinftyDecomposition} to the stochastic case.

\begin{prop} \label{prop:koopman_preserve}
Under Assumptions~\ref{assu:assum1}, \ref{assu:assum2}, and \ref{assu:assum3}, and for \( s = \lceil n/2 + \nu \rceil \), we have that \( \mathcal{U} H^s \subseteq H^s \), where \( H^s \) denotes the Sobolev space of order \( s \).
\end{prop}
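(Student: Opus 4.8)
The plan is to invoke Assumption~\ref{assu:assum2} to realize $\U$ as an integral operator against a smooth kernel and then differentiate under the integral sign. Fix $\psi \in H^s(\X)$; since $\X$ is bounded this gives in particular $\psi \in L^2(\X) \subseteq L^1(\X)$. By Assumption~\ref{assu:assum2},
\[
(\U\psi)(\mathbf{x}) = \int_\X \psi(x')\, \rho(\mathbf{x}, dx') = \int_\X \psi(x')\, p(\mathbf{x}, x')\, d\mu_\X(x'),
\]
so that $\U$ acts by integration against the density $p$, which is $C^m$ with $m = \lceil n/2 + \nu\rceil = s$.

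First I would establish that $\U\psi \in C^s(\X)$, together with an explicit formula for its derivatives. For every multi-index $\alpha$ with $|\alpha| \le s$, the derivative $D^\alpha_\mathbf{x} p$ exists and is continuous on the compact set $\X \times \X$ (Assumption~\ref{assu:assum1}), hence bounded, and the relevant difference quotients of $p$ in the variable $\mathbf{x}$ are dominated by a constant. Since $\mu_\X$ is a probability measure and $\psi \in L^1(\mu_\X)$, the dominated convergence theorem permits repeated differentiation under the integral sign, giving
\[
D^\alpha_\mathbf{x}(\U\psi)(\mathbf{x}) = \int_\X \psi(x')\, D^\alpha_\mathbf{x} p(\mathbf{x}, x')\, d\mu_\X(x'), \qquad |\alpha| \le s,
\]
and a further application of dominated convergence shows each of these is continuous in $\mathbf{x}$. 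Thus $\U\psi$ has classical derivatives up to order $s$, which then also serve as its weak derivatives on $\X$.

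Next I would bound these derivatives in $L^2$. Using the Cauchy–Schwarz inequality in $L^2(\mu_\X)$ and $\mu_\X(\X) = 1$,
\[
\bigl| D^\alpha_\mathbf{x}(\U\psi)(\mathbf{x}) \bigr| \le \|\psi\|_{L^2(\mu_\X)}\, \bigl\| D^\alpha_\mathbf{x} p(\mathbf{x}, \cdot)\bigr\|_{L^2(\mu_\X)} \le \|\psi\|_{L^2(\mu_\X)} \max_{|\beta| \le s}\sup_{\X \times \X} \bigl| D^\beta_\mathbf{x} p \bigr|,
\]
uniformly in $\mathbf{x}$, the last quantity being finite by the $C^s$-regularity of $p$ and compactness of $\X \times \X$. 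Since $\X$ has finite Lebesgue measure, squaring, integrating over $\X$ and summing over all $|\alpha| \le s$ yields
\[
\|\U\psi\|_{H^s(\X)} \le C\, \|\psi\|_{L^2(\mu_\X)} \le C'\, \|\psi\|_{L^2(\X)} \le C'\, \|\psi\|_{H^s(\X)},
\]
with $C$ depending only on $n$, $s$, $|\X|$ and $\max_{|\alpha|\le s}\sup_{\X\times\X}|D^\alpha_\mathbf{x} p|$; here the middle inequality uses that $\mu_\X$ has bounded density with respect to Lebesgue measure. In fact this shows the stronger statement that $\U \colon L^2(\X) \to H^s(\X)$ is bounded, which a fortiori gives $\U H^s \subseteq H^s$.

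The main obstacle is the rigorous justification of differentiation under the integral sign up to the boundary: one must ensure the $C^m$ regularity of $p$ is joint in $(\mathbf{x},x')$ (or at least that the $\mathbf{x}$-derivatives are continuous on the closed set $\X\times\X$), so that uniform dominating functions are available on all of $\X$, and that the resulting classical derivatives legitimately qualify as weak derivatives in the sense defining $H^s(\X)$ on a domain with Lipschitz boundary. A secondary point worth making explicit is the reconciliation of the reference measure $\mu_\X$ of Assumption~\ref{assu:assum2} with the Lebesgue measure underlying $H^s(\X)$; this is harmless provided the two are mutually absolutely continuous with bounded Radon–Nikodym densities, which is implicit in Assumptions~\ref{assu:assum1}--\ref{assu:assum3}.
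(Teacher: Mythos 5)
Your proof is correct and rests on the same core mechanism as the paper's: write $\U\psi$ as integration against the $C^m$ density $p$, differentiate under the integral sign by dominated convergence using the boundedness of $\partial_x^\alpha p$ on the compact set $\X\times\X$, and bound the resulting derivatives in $L^2$ by $\|\psi\|_{L^2}$. The difference is in how the conclusion is organized. The paper proves $\U H^m\subseteq H^m$ for every integer $m\le \lceil s\rceil$ and then invokes the Brenner--Scott interpolation lemma (Lemma~\ref{lema:interpolation}) to obtain invariance of $H^\ell$ for all $\ell\in[0,s]$, including fractional orders; this fractional case is what Corollary~\ref{cor:koopman_preserve_rkhs} actually needs, since $\H$ is norm-equivalent to $H^{n/2+\nu}$, which is generally non-integer. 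You instead prove the single integer order $s$ directly, but you observe the stronger smoothing bound $\|\U\psi\|_{H^s}\le C\|\psi\|_{L^2}$, i.e.\ $\U\colon L^2(\X)\to H^s(\X)$ is bounded. This is actually enough to recover everything the interpolation step delivers, without interpolation: for any $\sigma\in[0,s]$ one has $H^\sigma(\X)\hookrightarrow L^2(\X)$ and $H^s(\X)\hookrightarrow H^\sigma(\X)$, so $\U H^\sigma\subseteq H^\sigma$ with an operator-norm bound, covering the fractional order $n/2+\nu$ as well. So your route is slightly more economical (no interpolation machinery), at the cost of making explicit two points the paper leaves implicit and which you rightly flag: joint $C^m$ regularity of $p$ up to the boundary so that dominating functions exist on all of $\X$, and the identification (or mutual absolute continuity with bounded densities) of $\mu_\X$ with the Lebesgue measure underlying $H^s(\X)$ — the paper's own estimate conflates $L^2(\mu_\X)$ and $L^2(\X)$ in the same way, so this is not a gap relative to the paper's argument.
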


The proof of Proposition~\ref{prop:koopman_preserve} relies on an interpolation theorem for Sobolev spaces due to Brenner and Scott~\cite{Brenner2008TheMethods}.

\begin{lem} \label{lema:interpolation}
Let \( \mathcal{X} \) be a compact domain with Lipschitz boundary, and let \( H^s(\mathcal{X}) \) denote the Sobolev space over \( \mathcal{X} \) with integrability index \( p = 2 \) and regularity \( s \). Then, for all \( s, k > 0 \),
\[
H^{s + \theta k}(\mathcal{X}) = [H^s(\mathcal{X}), H^{s + k}(\mathcal{X})]_\theta,
\]
for any \( \theta \in [0, 1] \). In particular, if \( \psi \in H^s(\mathcal{X}) \cap H^{s + k}(\mathcal{X}) \), then \( \psi \in H^{s + \ell}(\mathcal{X}) \) for all \( \ell \in [0, k] \).
\end{lem}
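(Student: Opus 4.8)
The plan is to deduce the interpolation identity on the Lipschitz domain $\X$ from the corresponding identity on all of $\R^n$, where it reduces to a computation with the Fourier transform. Throughout, $[\cdot,\cdot]_\theta$ will denote the complex interpolation functor; since every space in play is an $L^2$-based Hilbert space, the real $K$-method yields the same spaces up to equivalence of norms, so the choice is immaterial.

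First I would establish that $[H^{s}(\R^n), H^{s+k}(\R^n)]_\theta = H^{s+\theta k}(\R^n)$. By Definition~\ref{def:frac_sob}, the Bessel-potential operator $\Lambda^{t} : f \mapsto \mathcal{F}^{-1}\big[(1+\|\cdot\|_2^2)^{t/2}\widehat{f}\big]$ is an isometric isomorphism $H^{t}(\R^n) \to L^2(\R^n)$ for every $t$, and $\Lambda^{s}$ carries the couple $(H^{s}(\R^n), H^{s+k}(\R^n))$ isometrically onto $(L^2(\R^n), H^{k}(\R^n))$. It therefore suffices to treat the base case $[L^2(\R^n), H^{k}(\R^n)]_\theta = H^{\theta k}(\R^n)$, which after conjugation by the Fourier transform is exactly complex interpolation between the weighted spaces $L^2(\R^n, d\xi)$ and $L^2(\R^n, (1+\|\xi\|_2^2)^{k}\, d\xi)$; the standard formula for complex interpolation of weighted $L^2$ spaces identifies the result as $L^2(\R^n, (1+\|\xi\|_2^2)^{\theta k}\, d\xi)$, i.e.\ $H^{\theta k}(\R^n)$. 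Pulling back by $\Lambda^{-s}$ gives the claimed identity on $\R^n$.

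Next I would transfer this to $\X$ by a retraction argument. Since $\X$ is compact with Lipschitz boundary, there is a bounded linear extension operator $E$ acting simultaneously and boundedly on the whole scale $H^{t}(\X) \to H^{t}(\R^n)$ for $t$ in the relevant range, while the restriction $R : H^{t}(\R^n) \to H^{t}(\X)$ satisfies $R\circ E = \mathrm{id}$ on each $H^{t}(\X)$. Hence $(H^{s}(\X), H^{s+k}(\X))$ is a retract of $(H^{s}(\R^n), H^{s+k}(\R^n))$, and because interpolation functors commute with retractions we obtain $[H^{s}(\X), H^{s+k}(\X)]_\theta = R\big([H^{s}(\R^n), H^{s+k}(\R^n)]_\theta\big) = R\big(H^{s+\theta k}(\R^n)\big) = H^{s+\theta k}(\X)$ with equivalent norms, which is the cited theorem of Brenner and Scott~\cite{Brenner2008TheMethods}. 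For the ``in particular'' claim, given $\psi \in H^{s}(\X)\cap H^{s+k}(\X)$ and $\ell \in [0,k]$, put $\theta = \ell/k \in [0,1]$ (the endpoints being trivial); the elementary continuous inclusion $X_0 \cap X_1 \hookrightarrow [X_0, X_1]_\theta$, valid for any interpolation couple, then gives $\psi \in [H^{s}(\X), H^{s+k}(\X)]_\theta = H^{s+\ell}(\X)$.

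The main obstacle is the transfer step: one needs a single extension operator that is bounded on the \emph{entire} scale of fractional Sobolev spaces over a merely Lipschitz domain. Calderón's extension handles integer orders, but securing uniform boundedness across fractional $s$ is precisely what makes the Lipschitz-boundary hypothesis essential (via, e.g., the Stein or Rogers extension operator), and it is this ingredient — together with the stability of interpolation under retractions — that we import from \cite{Brenner2008TheMethods}. Everything else is a routine Fourier-side computation.
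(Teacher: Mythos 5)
Your proposal is correct, but there is nothing in the paper to compare it against: Lemma~\ref{lema:interpolation} is stated as an imported result, cited from Brenner and Scott~\cite{Brenner2008TheMethods}, and the paper gives no proof of it. Your argument is the standard self-contained derivation: Bessel potentials reduce the full-space identity to $[L^2(\R^n),H^k(\R^n)]_\theta = H^{\theta k}(\R^n)$, which on the Fourier side is Stein--Weiss interpolation of weighted $L^2$ spaces, and the passage to the domain is the usual retract/coretract argument with a universal extension operator. Two small points deserve care. First, the retraction step implicitly uses that $H^t(\X)$ is the space of restrictions of $H^t(\R^n)$ functions (with quotient norm), or an intrinsic definition known to be equivalent on Lipschitz domains -- the paper only defines $H^s(\R^n)$ (Definition~\ref{def:frac_sob}), so this should be made explicit; with that convention, a single extension operator bounded at the two endpoints $H^s$ and $H^{s+k}$ (e.g.\ Rychkov's universal extension for Lipschitz domains, valid for all fractional orders) suffices, and your appeal to such an operator is legitimate. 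Second, the ``in particular'' clause does not actually need interpolation at all: since $\X$ is bounded, the scale is nested, $H^{s+k}(\X)\subseteq H^{s+\ell}(\X)$ for $\ell\in[0,k]$, so $\psi\in H^s(\X)\cap H^{s+k}(\X)=H^{s+k}(\X)$ already gives the conclusion; your route via the embedding of the intersection into $[X_0,X_1]_\theta$ is also fine, provided one uses the usual damping factor $e^{\delta(z-\theta)^2}$ to produce an admissible function in the complex-method construction. In short, the proof is sound and fills in a citation the paper leaves to the literature.
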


\begin{proof}[Proof of Proposition~\ref{prop:koopman_preserve}]
Let \( m \in \mathbb{N} \), \( m \leq k \), and let \( |\alpha| = m \) be a multi-index. For \( h \in H^m(\mathcal{X}) \), the dominated convergence theorem yields
\[
\partial_1^\alpha (\mathcal{U} h)(x) = \int_{\mathcal{X}} h(y) \, \partial_1^\alpha p(x, y) \, d\mu_{\mathcal{X}}(y).
\]
Hence,
\[
\| \partial_1^\alpha (\mathcal{U} h) \|_{L^2}^2 
\leq \int_{\mathcal{X}} \left( \int_{\mathcal{X}} |h(y)| \, |\partial_1^\alpha p(x, y)| \, d\mu_{\mathcal{X}}(y) \right)^2 d\mu_{\mathcal{X}}(x)
\leq \| \partial_1^\alpha p \|_{C^k}^2 \, \mu_{\mathcal{X}}(\mathcal{X}) \, \| h \|_{L^2}^2.
\]
This shows that \( \mathcal{U} h \in H^m(\mathcal{X}) \), with
\[
\| \mathcal{U} \|_{H^m(\mathcal{X}) \to H^m(\mathcal{X})} \leq \mu_{\mathcal{X}}(\mathcal{X})^{1/2} \sum_{|\alpha| \leq k} \| \partial_1^\alpha p \|_{C^k}.
\]
Therefore, \( \mathcal{U} H^m(\mathcal{X}) \subseteq H^m(\mathcal{X}) \) for all \( m \in \{0, \dots, k\} \). By Lemma~\ref{lema:interpolation}, for every \( \ell \in [0, k] \), we have that
\[
H^\ell(\mathcal{X}) = [H^0(\mathcal{X}), H^k(\mathcal{X})]_\theta, \quad \text{for } \theta = \ell / k,
\]
which implies \( \mathcal{U} H^\ell(\mathcal{X}) \subseteq H^\ell(\mathcal{X}) \). In particular, this holds for all \( \ell \in [0, s] \) since \( k = \lceil s \rceil \).
\end{proof}

Then, it follows directly from the norm equivalence between the RKHS and the Sobolev space that the Koopman operator preserves the RKHS.

\begin{cor} \label{cor:koopman_preserve_rkhs}
Under Assumptions~\ref{assu:assum1}, \ref{assu:assum2}, and \ref{assu:assum3}, it holds that \( \U \H \subseteq \H \).
\end{cor}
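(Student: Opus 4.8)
The plan is to combine Proposition~\ref{prop:koopman_preserve} with the norm equivalence provided by Assumption~\ref{assu:assum3}. First I would record that Assumption~\ref{assu:assum3} asserts not merely an inclusion but that the RKHS $\H$ and the Sobolev space $H^{n/2+\nu}(\X)$ consist of exactly the same functions, with equivalent norms; in particular $\H = H^{n/2+\nu}(\X)$ as sets. Consequently it is enough to prove the single inclusion $\U H^{n/2+\nu}(\X) \subseteq H^{n/2+\nu}(\X)$, and then read it back through the equivalence.

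Next I would appeal to Proposition~\ref{prop:koopman_preserve}. Although its statement is phrased for the integer index $s = \lceil n/2 + \nu\rceil$, its proof in fact establishes $\U H^{\ell}(\X) \subseteq H^{\ell}(\X)$ for \emph{every} $\ell \in [0, s]$, by interpolating (via Lemma~\ref{lema:interpolation}) between the integer-order inclusions $\U H^{m}(\X)\subseteq H^m(\X)$ obtained from the differentiation-under-the-integral estimate. Since $n/2 + \nu \le \lceil n/2+\nu\rceil = s$, the value $\ell = n/2+\nu$ lies in $[0,s]$, so we get exactly $\U H^{n/2+\nu}(\X) \subseteq H^{n/2+\nu}(\X)$.

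Finally I would transport this to $\H$: for an arbitrary $\psi \in \H$, the norm equivalence gives $\psi \in H^{n/2+\nu}(\X)$; hence $\U\psi \in H^{n/2+\nu}(\X)$ by the previous step; and applying the equivalence in the reverse direction yields $\U\psi \in \H$. As $\psi$ was arbitrary, $\U\H \subseteq \H$.

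The only point needing any care — and the reason this is not entirely a one-line corollary — is that one must use the full strength of the \emph{proof} of Proposition~\ref{prop:koopman_preserve} (namely the interpolation step covering all real orders $\ell\in[0,s]$), rather than just its stated integer-order conclusion, because the Sobolev order $n/2+\nu$ appearing in Assumption~\ref{assu:assum3} is in general fractional. It is also worth noting, for well-posedness, that $\U$ is indeed defined on $H^{n/2+\nu}(\X)$: since $n/2+\nu > n/2$ and $\X$ is compact with Lipschitz boundary (Assumption~\ref{assu:assum1}), the Sobolev embedding gives $H^{n/2+\nu}(\X)\hookrightarrow C^0(\X)$, so each such $\psi$ is bounded and the defining integral $(\U\psi)(\mathbf{x}) = \int_\X \psi(x')\,\rho(\mathbf{x},dx')$ converges.
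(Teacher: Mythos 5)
Your proposal is correct and follows essentially the same route as the paper: deduce the result from Proposition~\ref{prop:koopman_preserve} together with the norm equivalence of Assumption~\ref{assu:assum3}, the paper merely phrasing the transfer as a chain of norm inequalities (thereby also recording the bound $\|\U\|_{\H\to\H} \leq C_s^2 \|\U\|_{H^s\to H^s}$) where you argue set-theoretically. Your explicit remark that the fractional order $n/2+\nu$ is covered by the interpolation step inside the proof of Proposition~\ref{prop:koopman_preserve} (not just its stated integer-order conclusion) is a point the paper glosses over, but it rests on exactly the same lemma, so the approaches coincide.
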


\begin{proof}[Proof of Corollary \ref{cor:koopman_preserve_rkhs}]
Let \( \psi \in \mathcal{H}_\mathcal{X} \). Since \( \mathcal{H}_\mathcal{X} \) is norm-equivalent to \( H^s(\mathcal{X}) \), there exists a constant \( C_s > 0 \) such that
\[
\| \mathcal{U} \psi \|_{\mathcal{H}_\mathcal{X}} \leq C_s \| \mathcal{U} \psi \|_{H^s(\mathcal{X})} 
\leq C_s \| \mathcal{U} \|_{H^s(\mathcal{X}) \to H^s(\mathcal{X})} \| \psi \|_{H^s(\mathcal{X})}
\leq C_s^2 \| \mathcal{U} \|_{H^s(\mathcal{X}) \to H^s(\mathcal{X})} \| \psi \|_{\mathcal{H}_\mathcal{X}}.
\]
Therefore, \( \mathcal{U} \psi \in \mathcal{H}_\mathcal{X} \), and in particular,
\[
\| \mathcal{U} \|_{\mathcal{H}_\mathcal{X} \to \mathcal{H}_\mathcal{X}} \leq C_s^2 \| \mathcal{U} \|_{H^s(\mathcal{X}) \to H^s(\mathcal{X})},
\]
which proves the result.
\end{proof}

The following lemma, proved by Philipp et al.~\cite{Philipp2024ErrorOperator}, will be instrumental for the propositions that follow. It provides an equivalence between the condition \( \U \H \subseteq \H \) and the relation between the ranks of the covariance and cross-covariance operators.

\begin{lem}[Philipp et al.~\cite{Philipp2024ErrorOperator}] \label{lemma: rank_inclusion}
The following statements are equivalent:
\begin{enumerate}
    \item \( \U \H \subseteq \H \),
    \item \( \U \in \mathcal{L}(\H, \H) \),
    \item \( \operatorname{Range}(C_{XX^+}) \subseteq \operatorname{Range}(C_X) \).
\end{enumerate}
\end{lem}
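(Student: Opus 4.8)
The plan is to prove the cycle $(2)\Rightarrow(1)\Rightarrow(3)\Rightarrow(2)$, which delivers the full equivalence. The implication $(2)\Rightarrow(1)$ is immediate: an operator in $\mathcal{L}(\H, \H)$ in particular maps $\H$ into $\H$.

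For $(1)\Rightarrow(3)$ I would reuse the computation in the proof of Proposition~\ref{prop:covariance_koopman}. Unfolding the Kronecker product and applying Fubini shows that for every $\psi\in\H$ one has $C_{XX^+}\psi=\int_\X(\U\psi)(x)\,\Phi(x)\,d\mu_\X(x)$, and this right-hand side is a genuine element of $\H$ because $x\mapsto(\U\psi)(x)\Phi(x)$ is Bochner integrable (its norm is bounded by $\sqrt{\|k\|_\infty}\,\|\U\psi\|_\infty\le\|k\|_\infty\|\psi\|_\H$ and $\mu_\X$ is finite). Assuming $(1)$, $\U\psi\in\H$, so this element equals $C_X(\U\psi)$; hence $C_{XX^+}=C_X\U$ and $\operatorname{Range}(C_{XX^+})=C_X\big(\U(\H)\big)\subseteq C_X(\H)=\operatorname{Range}(C_X)$.

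The substantial step is $(3)\Rightarrow(2)$. Both $C_X$ and $C_{XX^+}$ are bounded on $\H$ (e.g.\ $\|C_{XX^+}\psi\|_\H\le\|k\|_\infty\|\psi\|_\H$, and similarly for $C_X$), so by Douglas' range-inclusion lemma the inclusion $\operatorname{Range}(C_{XX^+})\subseteq\operatorname{Range}(C_X)$ is equivalent to the existence of $W\in\mathcal{L}(\H, \H)$ with $C_{XX^+}=C_XW$; since $C_X$ is injective (Corollary~\ref{cor:cx_inj}) we may take $W=C_X^{-1}C_{XX^+}$, whose boundedness also follows from the closed graph theorem, $C_X^{-1}$ being closed on $\operatorname{Range}(C_X)$. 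It then remains to identify $W$ with $\U$. From $C_XW\psi=C_{XX^+}\psi=\int_\X(\U\psi)(x)\Phi(x)\,d\mu_\X(x)$, pairing against $\Phi(y)$ and using the reproducing property gives $\int_\X\big((W\psi)(x)-(\U\psi)(x)\big)k(x,y)\,d\mu_\X(x)=0$ for every $y\in\X$, i.e.\ the continuous function $W\psi-\U\psi$ is $L^2(\mu_\X)$-orthogonal to every $\Phi(y)$, hence to the span $\H_0$, which is dense in $\H$. Using that $\H_0$ is also dense in $L^2(\mu_\X)$ — a consequence of Assumption~\ref{assu:assum3} ($\H$ is norm-equivalent to $H^{n/2+\nu}(\X)$, which is dense in $L^2$) together with $\mu_\X$ having full support on $\X$ — we conclude $W\psi=\U\psi$ $\mu_\X$-almost everywhere; since both are continuous on the compact set $\X$ ($W\psi\in\H$ with the continuous Matérn kernel, $\U\psi$ continuous by the dominated-convergence argument in the proof of Proposition~\ref{prop:koopman_preserve}), they agree everywhere. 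Therefore $\U\psi=W\psi\in\H$ and $\U=W\in\mathcal{L}(\H, \H)$, which closes the cycle and simultaneously records the formula $\U=C_X^{-1}C_{XX^+}$.

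I expect the main obstacle to be precisely this last identification: the Douglas factorization is a statement about operators on $\H$, whereas $\U$ is defined pointwise on functions, so bridging the two forces one to pass through the $L^2(\mu_\X)$ pairing and then upgrade an almost-everywhere identity to a genuine pointwise one — which is where the standing regularity and full-support hypotheses on $\mu_\X$ and the continuity of the kernel are actually used. An alternative route, if one prefers to sidestep the density-in-$L^2$ argument, is to prove $(1)\Leftrightarrow(2)$ directly by the closed graph theorem (convergence in an RKHS implies pointwise convergence, and $\|k\|_\infty<\infty$ licenses passing the limit inside $(\U\psi_n)(x)=\int\psi_n(x')\,\rho(x,dx')$ by dominated convergence), and then obtain $(2)\Leftrightarrow(3)$ from the identity $C_{XX^+}=C_X\U$ together with the boundedness of $C_X^{-1}C_{XX^+}$; but this still requires the same injectivity/closedness bookkeeping for $C_X^{-1}$ on $\operatorname{Range}(C_X)$.
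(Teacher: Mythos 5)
The paper does not actually prove this lemma---it is imported verbatim from Philipp et al.~\cite{Philipp2024ErrorOperator}---so your proposal can only be compared with the cited source. Your cycle $(2)\Rightarrow(1)\Rightarrow(3)\Rightarrow(2)$ is sound in outline and close in spirit to the original argument: $(1)\Rightarrow(3)$ via the factorization $C_{XX^+}=C_X\U$ (exactly the computation of Proposition~\ref{prop:covariance_koopman}), and $(3)\Rightarrow(2)$ via Douglas' range-inclusion lemma (equivalently, the closed graph theorem applied to $C_X^{-1}C_{XX^+}$, using the injectivity of $C_X$ from Corollary~\ref{cor:cx_inj}). Your alternative remark that $(1)\Leftrightarrow(2)$ follows directly from the closed graph theorem, because $\H$-convergence implies pointwise convergence and $\|k\|_\infty<\infty$ permits dominated convergence inside $(\U\psi_n)(x)$, is also correct and is the standard way that implication is obtained.

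The one step that deserves scrutiny is the identification $W=\U$ in $(3)\Rightarrow(2)$. Your pairing argument only yields $\int\big((W\psi)(x)-(\U\psi)(x)\big)k(x,y)\,d\mu_\X(x)=0$ for all $y$, hence $W\psi=\U\psi$ only $\mu_\X$-almost everywhere; upgrading this to a pointwise identity on all of $\X$ (which is what statement $(1)$, with $\U$ defined pointwise on $\mathbb{C}^\X$, requires) needs \emph{both} continuity of $W\psi$ and $\U\psi$ \emph{and} that $\operatorname{supp}\mu_\X=\X$. You do invoke full support, but you attach it to the density of $\H_0$ in $L^2(\mu_\X)$, where it is not needed (continuous functions are dense in $L^2$ of any finite Borel measure on a compact set); it is the a.e.-to-everywhere step that genuinely uses it, and without it the implication $(3)\Rightarrow(1)$ can fail, since the covariance operators carry no information about $\U\psi$ off $\operatorname{supp}\mu_\X$. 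Neither full support nor the kernel regularity of Assumptions~\ref{assu:assum2}--\ref{assu:assum3} appears among the hypotheses of the lemma as stated, so your proof establishes a slightly weaker (though, in the paper's sampling context, reasonable) version; Philipp et al.\ sidestep this by working through the embedding of $\H$ into $L^2(\X;\mu_\X)$, where the identification is made up to $\mu_\X$-null sets and no continuity or support assumptions are required. Flagging that discrepancy---either add the full-support hypothesis or restate $(1)$ modulo $\mu_\X$-null sets---would make your argument complete.
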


We proceed following the approach of Philipp et al.~\cite{Philipp2024ErrorOperator}, using Hoeffding's inequality to derive a probabilistic bound for the approximation of the covariance operators. Our result differs from theirs in that we employ the operator norm rather than the Hilbert--Schmidt norm.

\begin{prop}
\label{prop:app_covariances}
Let \( \varepsilon > 0 \) and \( N \in \mathbb{N} \). Then, with probability at least \( (1 - \delta)^2 \), we have
\[
\| C_{X} - C_{X}^N \| \leq \varepsilon, \quad \| C_{XX^+} - C_{XX^+}^N \| \leq \varepsilon,
\]
where
\[
\delta = 2 \exp\left( -\frac{N \varepsilon^2}{8 \| k \|_\infty} \right).
\]
\end{prop}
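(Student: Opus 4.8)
The plan is to apply the Hoeffding inequality in Hilbert spaces (the Pinelis corollary stated above) twice, once for each covariance operator, and then combine the two events via a union bound to obtain the $(1-\delta)^2$ probability. The natural ambient Hilbert space for this argument is not $\H$ itself but the space of Hilbert--Schmidt operators on $\H$ (or, since we only need the operator norm as a lower bound, we can work with the Hilbert--Schmidt norm and then use $\|\cdot\|\le\|\cdot\|_{HS}$). Concretely, for $C_X$ I would set $\xi_j := \Phi(x_j)\otimes\Phi(x_j) - C_X$ for $j=1,\dots,N$. These are i.i.d.\ (since the $x_j\sim\mu_\X$ are i.i.d.), they take values in the separable Hilbert space of Hilbert--Schmidt operators, and $\E[\xi_j] = \E[\Phi(x_j)\otimes\Phi(x_j)] - C_X = 0$ by the definition of $C_X$. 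Moreover $C_X^N - C_X = \frac1N\sum_{j=1}^N \xi_j$, so the left-hand side $\|C_X - C_X^N\|$ is bounded by $\|\frac1N\sum_j\xi_j\|_{HS}$.

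The next step is the almost-sure bound $\|\xi_j\|_{HS}\le M$. Here $\|\Phi(x)\otimes\Phi(x)\|_{HS} = \|\Phi(x)\|_\H^2 = k(x,x)\le\|k\|_\infty$, so $\|C_X\|\le\|k\|_\infty$ as well (it is an average/expectation of such terms), and by the triangle inequality $\|\xi_j\|_{HS}\le 2\|k\|_\infty$. Thus $M = 2\|k\|_\infty$, and Hoeffding gives
\[
\P\!\left(\|C_X - C_X^N\| \ge \varepsilon\right) \le \P\!\left(\left\|\tfrac1N\textstyle\sum_j\xi_j\right\|_{HS}\ge\varepsilon\right) \le 2\exp\!\left(-\frac{N\varepsilon^2}{2(2\|k\|_\infty)^2}\right) = 2\exp\!\left(-\frac{N\varepsilon^2}{8\|k\|_\infty^2}\right).
\]
For $C_{XX^+}$ the argument is identical with $\xi_j := \Phi(x_j)\otimes\Phi(x_j^+) - C_{XX^+}$; the pairs $(x_j,x_j^+)$ are i.i.d., the mean is zero by definition of $C_{XX^+}$, and $\|\Phi(x)\otimes\Phi(y)\|_{HS} = \|\Phi(x)\|_\H\|\Phi(y)\|_\H \le \|k\|_\infty$, giving again $M=2\|k\|_\infty$ and the same tail bound. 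Setting each tail probability equal to $\delta$ and solving yields $\varepsilon = \sqrt{8\|k\|_\infty^2 \ln(2/\delta)/N}$, i.e.\ $\delta = 2\exp(-N\varepsilon^2/(8\|k\|_\infty^2))$; a union bound over the two events then gives that both operator-norm inequalities hold simultaneously with probability at least $1 - 2\delta$, and the stated form $(1-\delta)^2$ follows (it is a slightly weaker but cleaner bound, since $(1-\delta)^2 \le 1-2\delta$... one should double-check the intended bookkeeping here).

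The only genuine subtlety I anticipate is the discrepancy between the $\|k\|_\infty$ written in the statement and the $\|k\|_\infty^2$ that the computation above naturally produces in the denominator of the exponent: one must be careful about whether $\|k\|_\infty$ denotes $\sup_x k(x,x)$ or $\sup_x\|\Phi(x)\|_\H$, and whether the bound $M$ is $2\|k\|_\infty$ or $2\sqrt{\|k\|_\infty}$ depending on that convention — this is the book-keeping step to get exactly right so that the constant matches what is used downstream in Theorem \ref{theo:error_koop_sqrt_N_def}. The rest is routine: verifying measurability and separability of the Hilbert--Schmidt space (standard), verifying the zero-mean property (immediate from the definitions of $C_X$ and $C_{XX^+}$), and the union bound. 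No new ideas beyond the correct application of Pinelis' inequality are needed.
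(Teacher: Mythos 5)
Your proposal follows essentially the same route as the paper: center the rank-one operators $\Phi(x_j)\otimes\Phi(x_j)$ and $\Phi(x_j)\otimes\Phi(x_j^+)$, apply Pinelis' Hilbert-space Hoeffding inequality once for $C_X$ and once for $C_{XX^+}$, and combine the two events. Two comments. First, your Hilbert--Schmidt framing is actually the cleaner way to invoke Pinelis (the bounded operators under the operator norm do not form a Hilbert space; the paper applies the inequality to operator norms directly, implicitly using the comparison $\|\cdot\|\le\|\cdot\|_{HS}$ that you make explicit). Second, the constant discrepancy you flag is real: with the standard convention $\|k\|_\infty=\sup_x k(x,x)$, the operator (and Hilbert--Schmidt) norm of $\Phi(x)\otimes\Phi(x)$ equals $k(x,x)\le\|k\|_\infty$, not $\sqrt{\|k\|_\infty}$, so the honest almost-sure bound is $M=2\|k\|_\infty$ and the exponent denominator is $8\|k\|_\infty^2$, exactly as you compute (and as appears in the quoted result of Philipp et al.). The paper reaches $M=2\sqrt{\|k\|_\infty}$ through the chain $\|\psi\|_\infty^2\,k(x_j,x_j)\le\|k\|_\infty\|\psi\|_\H^2$, which is valid only when $k(x,x)\le 1$; this holds for the Mat\'ern kernels used later (where $\|k\|_\infty=1$, so the two constants coincide) but not in general. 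So your argument, as written, proves the proposition with $8\|k\|_\infty^2$ rather than $8\|k\|_\infty$; the gap is in the statement's normalization rather than in your reasoning, but it should be resolved explicitly since $C_\delta$ in Theorem \ref{theo:error_koop_sqrt_N_def} inherits this constant.

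The one genuine slip in your write-up is the final probability bookkeeping: $(1-\delta)^2=1-2\delta+\delta^2\ge 1-2\delta$, so your inequality is backwards, and the stated $(1-\delta)^2$ does \emph{not} follow from the union bound, which only yields the slightly weaker guarantee $1-2\delta$. The paper asserts $(1-\delta)^2$ without justification; since both events are built from the same sample $\{x_j\}$ (the cross-covariance event additionally involving $\{x_j^+\}$), independence is not available, and the defensible conclusion via your argument is $1-2\delta$. Your hedge about double-checking the bookkeeping is warranted, but the direction of the comparison as you wrote it is wrong.
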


\begin{proof}[Proof of Proposition~\ref{prop:app_covariances}]
First, for any \( \psi \in \H \), we compute
\[
\| (\Phi(x_j) \otimes \Phi(x_j)) \psi \|^2 = \| \psi(x_j) \Phi(x_j) \|^2 \leq \| \psi \|_\infty^2 k(x_j, x_j) \leq \| k \|_\infty \| \psi \|_\H^2,
\]
which implies \( \| \Phi(x_j) \otimes \Phi(x_j) \| \leq \sqrt{ \| k \|_\infty } \). Similarly, since \( \| \Phi(x_j^+) \| \leq \sqrt{ \| k \|_\infty } \), we also have
\[
\| \Phi(x_j) \otimes \Phi(x_j^+) \| \leq \sqrt{ \| k \|_\infty }.
\]
Taking expectations, we obtain
\[
\| \E[ \Phi(x_j) \otimes \Phi(x_j) ] \| \leq \E[ \| \Phi(x_j) \otimes \Phi(x_j) \| ] \leq \sqrt{ \| k \|_\infty },
\]
\[
\| \E[ \Phi(x_j) \otimes \Phi(x_j^+) ] \| \leq \E[ \| \Phi(x_j) \otimes \Phi(x_j^+) \| ] \leq \sqrt{ \| k \|_\infty }.
\]
It follows that
\[
\| \Phi(x_j) \otimes \Phi(x_j) - C_X \| \leq 2 \sqrt{ \| k \|_\infty }, \quad \| \Phi(x_j) \otimes \Phi(x_j^+) - C_{XX^+} \| \leq 2 \sqrt{ \| k \|_\infty }.
\]

Now, let \( \varepsilon > 0 \) and define
\[
\delta = 2 \exp\left( -\frac{N \varepsilon^2}{8 \| k \|_\infty} \right).
\]
Applying Hoeffding's inequality in Hilbert spaces, we get
\[
\mathbb{P}\left( \| C_X - C_X^N \| > \varepsilon \right) \leq \delta, \quad 
\mathbb{P}\left( \| C_{XX^+} - C_{XX^+}^N \| > \varepsilon \right) \leq \delta.
\]
Therefore, with probability at least \( (1 - \delta)^2 \), we conclude
\[
\| C_X - C_X^N \| \leq \varepsilon, \quad \| C_{XX^+} - C_{XX^+}^N \| \leq \varepsilon.
\]
\end{proof}

Now we present the proof of Theorem \ref{theo:error_koop_sqrt_N_def} for kEDMD, where the injectivity of $C_X$ is critical, using the coercivity constant. This is the most important difference with the work of Philipp et al.  

\begin{proof}[Proof of Theorem \ref{theo:error_koop_sqrt_N_def}]
    Let $\psi \in \H$, then
\[
\begin{aligned}
    \| \U \psi - \U_N \psi \| &= \| C_X^{-1} C_{XX^+} \psi - \left (C_X^N \right )^{-1} C_{XX^+}^N \psi \| \\
    & \leq \| C_X^{-1} C_{XX^+} \psi - C_X^{-1} C_{XX^+}^N \psi \| + \| C_X^{-1} C_{XX^+}^N \psi - \left (C_X^N \right )^{-1} C_{XX^+}^N \psi \|.
\end{aligned}
\]
For the first term: \(\| C_X^{-1} C_{XX^+} \psi - C_X^{-1} C_{XX^+}^N \psi \|\),
let \(\Tilde{\psi} = C_{XX^+} \psi, \,  \Tilde{\psi}_N = C_{XX^+}^N \psi\) so that
\[
\| C_X^{-1} C_{XX^+} \psi - C_X^{-1} C_{XX^+}^N \psi \| = \| C_X^{-1} \Tilde{\psi} - C_X^{-1} \Tilde{\psi}_N \|,
\]
then let $\hat{\psi}$ and $\hat{\psi}_N$ such that \(C_X \hat{\psi} = \Tilde{\psi}, \, C_X \hat{\psi}_N = \Tilde{\psi}_N\), 
which exist since $\text{Ran}(C_{XX^+}) \subseteq \text{Ran}(C_X)$. Thus,
\[
\| C_X^{-1} C_{XX^+} \psi - C_X^{-1} C_{XX^+}^N \psi \| = \|\hat{\psi} - \hat{\psi}_N \|.
\]
By the injectivity of $C_X$, there exists $c_1$ such that
\[
c_1 \| \hat{\psi} - \hat{\psi}_N  \| \leq \| C_X \hat{\psi} - C_X \hat{\psi}_N \| = \| \Tilde{\psi} - \Tilde{\psi}_N \| \leq \| C_{XX^+} - C_{XX^+}^N \| \| \psi \| \leq \varepsilon \| \psi \|,
\]
from which we obtain \( \displaystyle\| C_X^{-1} C_{XX^+} \psi - C_X^{-1} C_{XX^+}^N \psi \| \leq \frac{\varepsilon}{c_1} \| \psi \|\).
\\
\\
Now, for the term  \(\| C_X^{-1} C_{XX^+}^N \psi - \left (C_X^N \right )^{-1} C_{XX^+}^N \psi \|\) denote \(\Tilde{\psi}_N = C_{XX^+}^N \psi\) so that  
\[
\| C_X^{-1} C_{XX^+}^N \psi - \left (C_X^N \right )^{-1} C_{XX^+}^N \psi \| = \| C_X^{-1} \Tilde{\psi}_N - \left (C_X^N \right )^{-1} \Tilde{\psi}_N \|.
\]
Note that  \(
\Tilde{\psi}_N \in 
\H_N = \text{Ran}(C_{X}^N) = \text{Ran}(C_{XX^+}^N) \subseteq \text{Ran}(C_{XX^+}) \subseteq \text{Ran}(C_X),
\) so that $\Tilde{\psi}_N \in \text{Ran}(C_{X}^N)$ and $\Tilde{\psi}_N \in \text{Ran}(C_{X})$. Thus, there exist $\hat{\psi}_N$ and $\hat{\psi}$ such that \(C_X^N \hat{\psi}_N = \Tilde{\psi}_N, \, C_X \hat{\psi} = \Tilde{\psi}_N,\)
so that  
\[
\| C_X^{-1} C_{XX^+}^N \psi - \left (C_X^N \right )^{-1} C_{XX^+}^N \psi \| = \| \hat{\psi}_N - \hat{\psi}\|,
\]
then, since $C_X^N \hat{\psi}_N = C_X \hat{\psi}$, it follows that  \(C_X \hat{\psi} - C_X \hat{\psi}_N = C_X^N \hat{\psi}_N - C_X \hat{\psi}.\) By the injectivity of \( C_X \), we have  
\[
c_1 \| \hat{\psi} - \hat{\psi}_N \| \leq  \| C_X \hat{\psi} - C_X \hat{\psi}_N \| \leq \| C_X^N \hat{\psi}_N - C_X \hat{\psi}_N \| \leq \| C_X^N - C_X\| \| \hat{\psi}_N \| \leq \varepsilon \| \hat{\psi}_N \|,
\]
that is \(\displaystyle \| \hat{\psi} - \hat{\psi}_N \| \leq \frac{\varepsilon}{c_1} \| \hat{\psi}_N \|.\)
\\
\\
On the other hand, \(C_X^N \hat{\psi}_N = C_X \hat{\psi}_N + (C_X^N - C_X) \hat{\psi}_N,\) and therefore \(-C_X \hat{\psi}_N = -C_X^N \hat{\psi}_N + (C_X^N - C_X) \hat{\psi}_N,\) which implies  
\[
\| C_X \hat{\psi}_N \| \leq \| C_X^N \hat{\psi}_N \| + \| (C_X^N - C_X) \hat{\psi}_N \| \leq \| C_X^N \hat{\psi}_N \|+ \| C_X^N - C_X \| \| \hat{\psi}_N \| \leq \| C_X^N \hat{\psi}_N \| + 
\varepsilon \| \hat{\psi}_N \|,
\]
from which it follows that \(\| C_X \hat{\psi}_N \| - \varepsilon \| \hat{\psi}_N \| \leq \| C_X^N \hat{\psi}_N \|.\) By the injectivity of \( C_X \), we have \(c_1 \| \hat{\psi}_N \| - \varepsilon \| \hat{\psi}_N \|  \leq \| C_X^N \hat{\psi}_N \|\).
Recalling that \( C_X^N \hat{\psi}_N = \Tilde{\psi}_N \) and \( \Tilde{\psi}_N = C_{XX^+}^N \psi \), it follows that  
\[
(c_1 - \varepsilon) \| \hat{\psi}_N \| \leq \| C_X^N \hat{\psi}_N \| = \| \Tilde{\psi}_N \| = \| C_{XX^+}^N \psi \| \leq \| C_{XX^+}^N \| \| \psi \|.
\]
If \( \varepsilon < c_1 \), we obtain \(\displaystyle \| \hat{\psi}_N \| \leq \frac{1}{c_1 - \varepsilon}
(\| C_{XX^+} \| + \| C_{XX^+} - C_{XX^+} \| ) \| \psi \|\). With all this, we obtain  
\[
\begin{aligned}
    \| \hat{\psi} - \hat{\psi}_N \| \leq \frac{\varepsilon}{c_1(c_1 - \varepsilon)}
(\| C_{XX^+} \| + \| C_{XX^+} - C_{XX^+} \| ) \| \psi \| & \leq \frac{\varepsilon}{c_1^2}
(\| C_{XX^+} \| + c_1 ) \| \psi \|.
\end{aligned}
\]
Thus, \(\displaystyle \| C_X^{-1} C_{XX^+}^N \psi - \left (C_X^N \right )^{-1} C_{XX^+}^N \psi \| \leq \frac{\varepsilon}{c_1^2}
(\| C_{XX^+} \| + c_1 ) \| \psi \|\) and therefore,  
\[
\| \U \psi - \U_N \psi \| \leq \frac{\varepsilon}{c_1} \| \psi \| + \frac{\varepsilon}{c_1^2}
(\| C_{XX^+} \| + c_1 ) \| \psi \|,
\]
from which we conclude that  
\[
\| \U - \U_N \| \leq \left ( \frac{1}{c_1} + \frac{1}{c_1^2}
(\| C_{XX^+} \| + c_1 ) \right ) \varepsilon \leq \left ( \frac{1}{c_1} + \frac{1}{c_1^2}
(\sqrt{\| k \|_{\infty}} + c_1 ) \right ) \varepsilon.
\]
We have that 
\[
\delta > 2\text{exp} \left ( - \frac{N c_1^2}{8 \| k \|_\infty} \right ) \implies \sqrt{8 \| k \|_{\infty} \ln \left ( \frac{2}{\delta} \right )} < N^{1/2} c_1.
\]
Defining \(\varepsilon = \sqrt{8 \| k \|_{\infty} \ln \left ( \frac{2}{\delta} \right )} N^{-1/2} < c_1 \), we obtain  
\[
\| \U - \U_N \| \leq \left ( \frac{2}{c_1} + \frac{\sqrt{\| k \|_{\infty}}}{c_1^2}
  \right )\sqrt{8 \| k \|_\infty \ln \left ( \frac{2}{\delta}\right ) } N^{-1/2}.
\]
\end{proof}

\subsection{Remarks of Theorem \ref{theo:error_koop_sqrt_N_def}}

The bound established by Philipp et al.~\cite{Philipp2024ErrorOperator}, which serves as the primary point of comparison for this work, is presented in the following theorem. Although their result pertains to stochastic systems in continuous time, it concerns the approximation of the Koopman operator---not the associated evolution semigroup. Nevertheless, the underlying definitions and analytical techniques are analogous.

\begin{theo*}[Philipp et al.~\cite{Philipp2024ErrorOperator}] \label{teo:error_koop}
    Let \( N \in \mathbb{N} \) be arbitrary. Assume that the first \( N + 1 \) eigenvalues \( \lambda_j \) of \( C_X \) are simple, i.e., \( \lambda_{j+1} < \lambda_j \) for all \( j = 1, \ldots, N \). Define:
    \[
    \delta_N = \min_{j=1, \ldots, N} \frac{\lambda_j - \lambda_{j+1}}{2}, \quad \| k \|_1 = \int_{\X}  |k(x,x)| \, d\mu_\X (x), \quad c_N = \frac{1}{\sqrt{\lambda_N}} + \frac{N + 1}{\delta_N \lambda_N} (1 + \|k\|_{1}) \|k \|^{1/2}_{1}.
    \]
    Let \( \varepsilon \in (0, \delta_N) \) and \( \delta \in (0, 1) \) be arbitrary, and assume that
    \[
    N \geq  \frac{8\|k\|^2_\infty \ln(2/\delta)}{\varepsilon^2}.
    \]
    If \( \U \H \subset \H \), then with probability at least \( (1 - \delta)^2 \), the following bound holds:
    \[
    \|\U - \U_N \|_{\H_N \to L^2(\X; \mu_\X)} \leq \sqrt{\lambda_{N+1}} \|\U \|_{\H_{X} \to \mathcal{H}_\X} + c_N \varepsilon.
    \]
\end{theo*}

The proof of this result employs techniques similar to those used in the present work, in the sense that Hoeffding's inequality plays a central role in deriving a probabilistic bound. However, the authors rely on the simplicity of the eigenvalues of \( C_X \), which allows for bounds involving the spectral gap. This assumption introduces a significant limitation: verifying the simplicity of the eigenvalues in practical applications is often infeasible.

Additionally, suppose we consider the case where the right-hand side of the inequality is \( O(N^{-1/2}) \). To ensure this convergence rate, the success probability must be severely restricted. Moreover, the bound does not exhibit asymptotic consistency in probability: it does not guarantee that the probability of success tends to 1 as \( N \to \infty \).

It is important to highlight some aspects regarding the bound in this work. First, it is crucial to note that not much is known about the constant \( c_1 \), except that it must be bounded by the norm of the operator \( C_X \), as shown by the inequality
\[
c_1 \| \psi \| \leq \| C_X \psi \| \leq \| C_X \| \| \psi \| \leq \sqrt{\| k \|_\infty} \| \psi \|, \quad \forall \psi \in \mathcal{H} \setminus \{ 0 \},
\]
which implies \( c_1 \leq \sqrt{\| k \|_\infty} \). While this is not particularly useful for the bounds proven here, it is helpful for determining the maximum probability of success given an \( N \in \mathbb{N} \). Indeed, we have
\[
2\exp \left( - \frac{N c_1^2}{8 \| k \|_\infty} \right) \geq 2\exp \left( - \frac{N \| k \|_\infty}{8 \| k \|_\infty} \right) = 2\exp \left( - \frac{N}{8} \right),
\]
which leads to the maximum probability of success given by
\[
\delta_{\text{max}} = \left( 1 - 2\exp \left( - \frac{N}{8} \right) \right)^2,
\]
and it converges to 1 as \( N \to \infty \).

Another relevant point is that, for some \( N > 1 \), there always exists a \( \delta \) that satisfies the condition required by the theorem. This condition is
\[
\delta_{\text{adm}, N} = 2\exp \left( - \frac{(N-1) c_1^2}{8 \| k \|_\infty} \right),
\]
which, although practically unknown due to the fact that \( c_1 \) is not explicitly known, exactly describes the exponential decay of the probability of failure as \( N \to \infty \).

Additionally, a lower bound for the constant \( C_\delta \) can be derived, and its growth can be estimated as
\[
\Tilde{C}_\delta = \frac{3}{\sqrt{\| k \|_\infty}} \sqrt{8 \| k \|_\infty \ln \left( \frac{2}{\delta} \right)} \leq \left( \frac{2}{c_1} + \frac{\sqrt{\| k \|_\infty}}{c_1^2} \right) \sqrt{8 \| k \|_\infty \ln \left( \frac{2}{\delta} \right)} \leq C_\delta.
\]
As shown in Table~\ref{tab:ctes_cota_kEDMD}, the growth of this constant is relatively slow, characteristic of logarithmic growth within a square root, while the probability of success grows exponentially, considering \( \delta_{\text{adm}, N} \).

\begin{table}[]
    \centering
    \begin{tabular}{|c|c|c|}
\hline
\( N \) & \( (1 - \delta_{\text{adm}, N})^2 \cdot 100 \% \) & \( \Tilde{C}_{\delta_{\text{adm}, N}} \) \\ \hline
10 & 25.97\% & 4.50 \\ \hline
50 & 32.20\% & 10.50 \\ \hline
100 & 82.68\% & 14.92 \\ \hline
200 & 99.20\% & 21.16 \\ \hline
300 & 99.96\% & 25.94 \\ \hline
\end{tabular}
    \caption{Values of \( (1 - \delta_{\text{adm}, N})^2 \cdot 100 \% \) and \( \Tilde{C}_{\delta_{\text{adm}, N}} \) for different values of \( N \). Assumed that \( \| k \|_\infty = 1 \) (for the Matérn kernel) and \( c_1 = 0.5 \).} 
  \label{tab:ctes_cota_kEDMD}
\end{table}

\begin{figure}
    \centering
\includegraphics[width=0.7\linewidth]{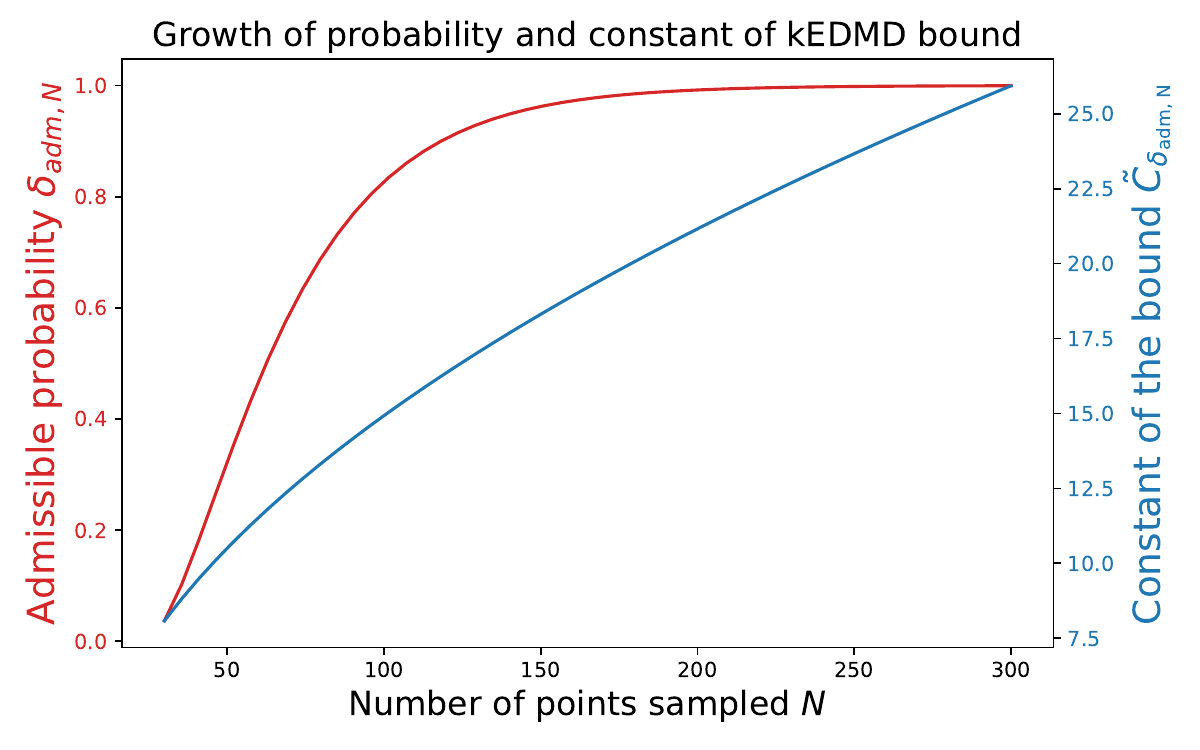} 
    \caption{Plot of the growth of the admissible probability of success \( (1 - \delta_{\text{adm}, N})^2 \) and the constant \( \Tilde{C}_{\delta_{\text{adm}, N}} \).} 
  \vspace{0.5em} 
\end{figure}

\subsection{Proof of Theorem \ref{theo:lifted_traj}}

Given a bound for the Koopman operator, we can straightforwardly generate a bound for lifted trajectories. Once the embedding of an initial point is done in \( \mathcal{H} \), the system evolution can be represented linearly, i.e.,
\begin{equation}
    \label{eq:mean_exact}
    \hat{\mu}_{k+1} = \U^* \hat{\mu}_k, \quad \hat{\mu}_0 = \Phi(\mathbf{x}_0).
\end{equation}
In the stochastic setting, this will only recover the mean trajectory, due to the properties of the conditional embedding operator. For the first evolution, we have
\[
\hat{\mu}_1 = C_{X^+|X} \Phi(\mathbf{x}_0) = \mathbb{E}[\Phi(X^+)|X = \mathbf{x}_0] = \mathbb{E}[\Phi(\mathbf{x}_1)],
\]
and for the second
\[
\hat{\mu}_2 = C_{X^+|X} \hat{\mu}_1 = C_{X^+|X} \mathbb{E}[\Phi(\mathbf{x}_1)] = \mathbb{E}[C_{X^+|X} \Phi(\mathbf{x}_1)] = \mathbb{E}[\mathbb{E}[\Phi(X^+)|X = \mathbf{x}_1]] = \mathbb{E}[\mathbb{E}[\Phi(\mathbf{x}_2)]].
\]
Iteratively, we have \( \hat{\mu}_k = \mathbb{E}[\dots \mathbb{E}[\mathbb{E}[\Phi(\mathbf{x}_k)]]] \).

However, a realization of the trajectory can also be embedded, following a linear and stochastic dynamic in \( \mathcal{H} \). This is given by \( \mu_k = \Phi(\mathbf{x}_k) \), and we can add a null term to the recurrence relation for the term \( k+1 \):
\[
\mu_{k+1} = C_{X^+|X} \Phi(\mathbf{x}_k) + \Phi(\mathbf{x}_{k+1}) - C_{X^+|X} \Phi(\mathbf{x}_k),
\]
which leads to
\begin{equation}
    \label{eq:stoch_exact}
    \mu_{k+1} = \U^* \mu_k + \zeta_k, \quad \mu_0 = \Phi(\mathbf{x}_0),
\end{equation}
where \( \zeta_k = \Phi(\mathbf{x}_{k+1}) - C_{X^+|X} \Phi(\mathbf{x}_k) \) is a zero-mean random variable because
\[
\begin{aligned}
    \mathbb{E}[\zeta_k] 
    &= \mathbb{E}[\Phi(X^+)|X = \mathbf{x}_k] - \mathbb{E}[C_{X^+|X} \Phi(\mathbf{x}_k)] \\
    &= C_{X^+|X} \Phi(\mathbf{x}_k) - C_{X^+|X} \Phi(\mathbf{x}_k) = 0,
\end{aligned}
\]
where the measurability of \( C_{X^+|X} \Phi(\mathbf{x}_k) \) with respect to the expectation of \( \zeta_k \) and Kernel Bayes' Rule are applied.

Therefore, a \textit{lifting-back} operator is required to recover the trajectories generated in the lifted space. This operator \( \B : \mathcal{H} \to \mathbb{R}^n \) must satisfy \( \B \Phi(\mathbf{x}) = \mathbf{x} \), i.e., \( \B \) acts as a left inverse of the feature map. In fact, this operator can be seen as a covariance operator. For this purpose, we define a new kernel \( \Tilde{k}: \X \times \X \to \mathbb{R} \) with RKHS \( \Tilde{\mathcal{H}} \), which acts as the dot product kernel, i.e., \( \Tilde{k}(x, y) = \langle x, y \rangle \), with the associated canonical feature map \( \Tilde{\phi}: \X \to \Tilde{\mathcal{H}} \), where \( \Tilde{\phi}(x) = \Tilde{k}(x, \cdot) = \langle x, \cdot \rangle \). This gives the RKHS \( \Tilde{\mathcal{H}} = (\mathbb{R}^n)^* \), the dual of \( \mathbb{R}^n \).

Figure~\ref{fig:KoopDiag} illustrates the entire process for obtaining trajectories in the original space. Using this, we can define a new covariance operator.
\begin{figure}[ht]
    \centering
    \includegraphics[width=0.8\linewidth]{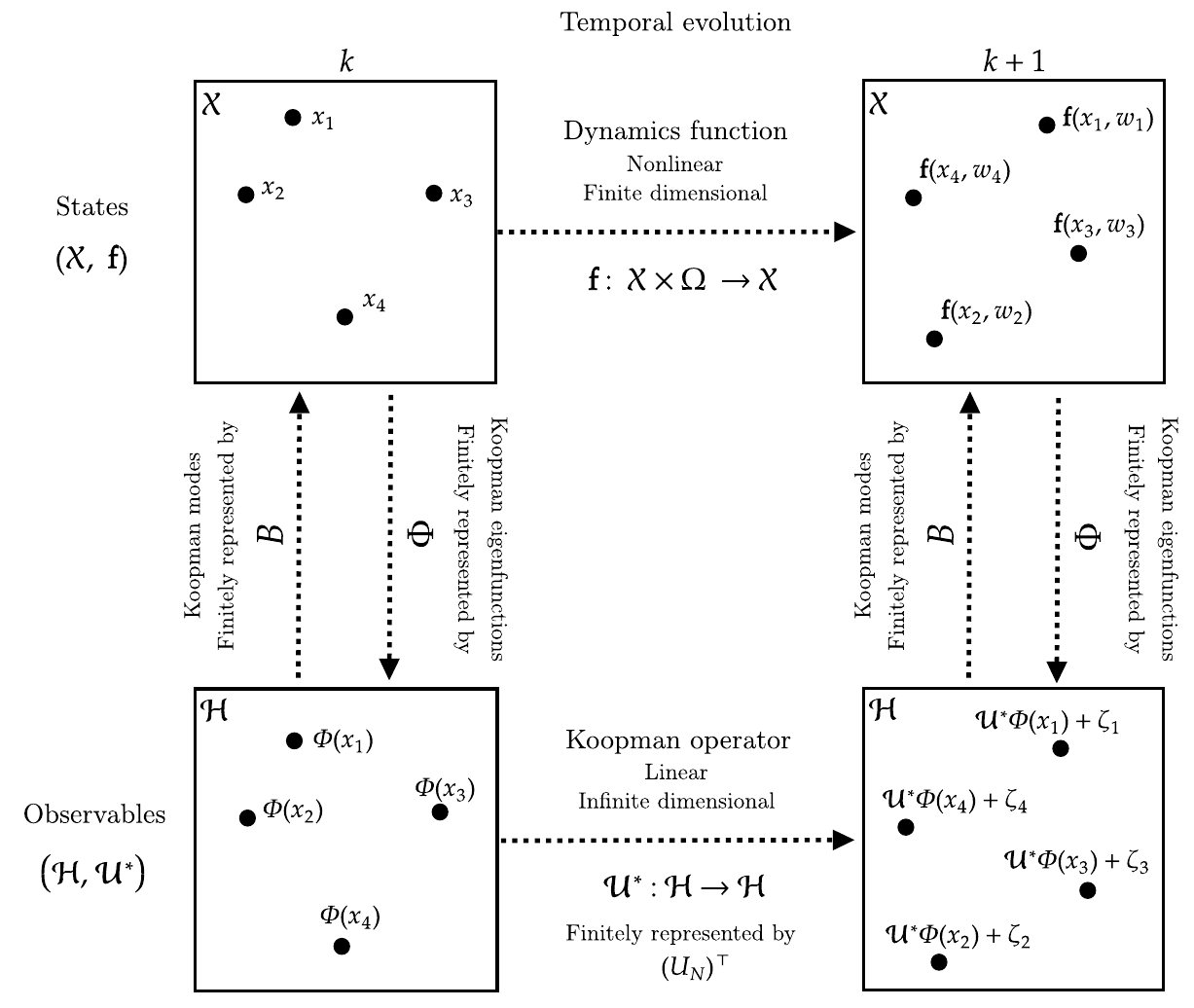}
    \caption{Diagram explaning Extended Dynamic Mode Decomposition with this notation. Adapted from Williams et al. \cite{Williams2015ADecomposition}.}
    \label{fig:KoopDiag}
\end{figure}
\begin{defn}
    We define the covariance operator \( C_{XX}: \mathcal{H} \to \Tilde{\mathcal{H}} \) as
    \[
    C_{XX} = \mathbb{E}[\Tilde{\phi}(X) \otimes \Phi(X)],
    \]
    and the conditional embedding operator \( C_{X|X} = C_{XX} C_X^{-1} \).
\end{defn}

By the Kernel Bayes' Rule, this operator acts as a left inverse of the feature map, thereby defining the desired lifting-back operator.

\begin{prop} \label{prop:left_inverse_feature}
    \( C_{X|X}\Phi(\mathbf{x}) = \mathbf{x} \), \(\forall \mathbf{x} \in \mathcal{X} \).
\end{prop}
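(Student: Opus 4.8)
\textbf{Proof proposal for Proposition~\ref{prop:left_inverse_feature}.}

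The plan is to mirror the argument already used to identify the conditional embedding operator $C_{X^+|X}$ with the adjoint of the Koopman operator, but now in the degenerate case where the ``output'' random variable coincides with the input $X$ and the output kernel is the dot-product kernel $\Tilde{k}(x,y) = \langle x, y\rangle$. The key observation is that the Kernel Bayes' Rule applies verbatim: since $\Tilde{k}$ is the linear kernel, its RKHS $\Tilde{\mathcal{H}} = (\R^n)^*$ consists of linear functionals $x \mapsto \langle a, x\rangle$, and for any such functional $h_a(\cdot) = \langle a, \cdot\rangle$ the conditional expectation $\E[h_a(X)\mid X = \mathbf{x}] = \langle a, \mathbf{x}\rangle = h_a(\mathbf{x})$ is trivially again an element of $\Tilde{\mathcal{H}}$, so the hypothesis needed for the Kernel Bayes' Rule is automatic. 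Hence $C_{X|X} = C_{XX}C_X^{-1}$ is the conditional embedding operator between $X$ (embedded via $\Phi$ into $\H$) and $X$ (embedded via $\Tilde{\phi}$ into $\Tilde{\mathcal{H}}$), and it satisfies the defining properties of the conditional embedding operator, namely $C_{X|X}\Phi(\mathbf{x}) = \mu_{X|\mathbf{x}} = \E[\Tilde{\phi}(X)\mid X = \mathbf{x}]$.

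From here I would carry out the following steps in order. First, compute $C_{XX}\psi$ for $\psi \in \H$ directly from the definition of the Kronecker product and the definition $C_{XX} = \E[\Phi(X)\otimes\Phi(X)]$; using $[\Phi(x)\otimes\Phi(x)]\psi = \psi(x)\,\Phi(x)$ one gets, exactly as in Proposition~\ref{prop:covariance_koopman}, that $C_{XX}\psi = \int_\X \psi(x)\,\Tilde{\phi}(x)\,d\mu_\X(x) = C_X'\,\psi$ where I must be careful about which feature map appears on which side (the ``first slot'' lands in $\Tilde{\mathcal{H}}$). Second, establish the analogue of the identity $C_{XX^+} = C_X\,\U$: here it reads $C_{XX} = T\,C_X$ where $T$ is the (bounded) operator sending $\psi \in \H$ to the conditional expectation $x \mapsto \E[\psi(X)\mid X=x] = \psi(x)$ composed appropriately, i.e.\ the relevant identity is simply $C_{XX} = C_{X|X}\,C_X$ once we know $C_{X|X}$ exists, which is the content of the Kernel Bayes' Rule applied with the linear output kernel. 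Third, evaluate on the feature map: $C_{X|X}\Phi(\mathbf{x}) = \mu_{X|\mathbf{x}} = \E[\Tilde{\phi}(X)\mid X = \mathbf{x}] = \Tilde{\phi}(\mathbf{x}) = \langle \mathbf{x}, \cdot\rangle$, using that conditioning on $X = \mathbf{x}$ makes the embedded variable deterministic. Fourth, identify the element $\langle \mathbf{x}, \cdot\rangle \in \Tilde{\mathcal{H}} = (\R^n)^*$ with the vector $\mathbf{x} \in \R^n$ via the canonical Riesz isomorphism, which is exactly the identification implicit in writing $\B : \H \to \R^n$; under this identification $C_{X|X}\Phi(\mathbf{x}) = \mathbf{x}$, as claimed.

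The main obstacle is bookkeeping rather than any deep difficulty: one must be scrupulous about the fact that $C_{XX}$ maps $\H \to \Tilde{\mathcal{H}}$ (two different Hilbert spaces with two different feature maps), so the Kronecker product $\Phi(X)\otimes\Phi(X)$ appearing in its definition is really $\Tilde{\phi}(X)\otimes\Phi(X)$ in the notation of the Kronecker-product definition — i.e.\ the ``output'' feature map on the left and the ``input'' feature map on the right — and correspondingly the Kernel Bayes' Rule $C_{X|X} = C_{XX}C_X^{-1}$ is the mixed-kernel version. I would state this convention explicitly at the start of the proof. A secondary point to verify carefully is that $C_X^{-1}$ is applied only to elements of its range: since $\Tilde{\phi}(\mathbf{x}) = \Phi(\mathbf{x})$ pre-image considerations are the same as in Proposition~\ref{prop:covariance_koopman}, and the hypothesis $\Tilde{\mathcal{H}} = (\R^n)^* = \B^*(\R^p)^* \subseteq \H$ (assumed in Theorem~\ref{theo:lifted_traj}) guarantees $\mathrm{Range}(C_{XX}) \subseteq \mathrm{Range}(C_X)$ by the same rank-inclusion argument as Lemma~\ref{lemma: rank_inclusion}, so the composition is well defined. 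Once the conventions are pinned down, the computation is a two-line application of the conditional embedding operator's defining property to the deterministic conditional law of $X$ given $X$.
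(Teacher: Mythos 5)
Your proposal is correct and follows essentially the same route as the paper: the paper's proof is exactly your third and fourth steps compressed into one line, namely $\mathbf{x} = \E[X \mid X=\mathbf{x}] = C_{X|X}\,\E[\Phi(X)\mid X=\mathbf{x}] = C_{X|X}\Phi(\mathbf{x})$, i.e.\ the defining property of the conditional embedding operator via the Kernel Bayes' Rule with the linear output kernel, together with the implicit identification of $\Tilde{\mathcal{H}} = (\R^n)^*$ with $\R^n$. Your additional bookkeeping (verifying the Kernel Bayes' Rule hypothesis for the linear kernel, the mixed-kernel convention for $C_{XX}$, and the range condition for $C_X^{-1}$) is a more careful rendering of what the paper leaves implicit, not a different argument.
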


\begin{proof}[Proof of Proposition \ref{prop:left_inverse_feature}] 
    This follows straightforwardly from the Kernel Bayes' Rule:
    \[
    \mathbf{x} = \mathbb{E}[\Tilde{\phi}(X) | X = \mathbf{x}] = C_{X|X} \Phi(\mathbf{x}).
    \]
\end{proof}

Thus, we define \( \mathcal{B} = C_{X|X} \), and by the previous section, \( \mathcal{B}^* \) can be viewed as a Koopman operator, which can be approximated by finite-range operators. Given the same set of points from the previous section \( \{x_i\}_{i=1}^N \), we define
\[
C_{XX}^N = \frac{1}{N} \sum_{j=1}^N \Phi(x_i) \otimes \Phi(x_i),
\]
and the approximation of \( \mathcal{B} \) as \( \mathcal{B}_N: \mathcal{H} \to \Tilde{\mathcal{H}} \) is given by \(\mathcal{B}_N = C_{XX}^N (C_X^N)^{-1}\).

Now, let the following trajectories correspond to those generated by kEDMD for the mean trajectory \eqref{eq:mean_exact} and the trajectory with additive noise \eqref{eq:stoch_exact}, respectively:
\begin{equation*}
    \label{eq:mean_app}
    \tag{\ref{eq:mean_exact}.1}
    \hat{\mu}_{N,k+1} = \U^*_N \hat{\mu}_{N,k}, \quad \hat{\mathbf{x}}_{N,k} = \mathcal{B}_N \hat{\mu}_{N,k}, \quad \hat{\mu}_{N,0} = \Phi(\mathbf{x}_0),
\end{equation*}
\begin{equation*}
    \label{eq:stoch_app}
    \tag{\ref{eq:stoch_exact}.1}
    \mu_{N,k+1} = \U^*_N \mu_{N,k} + \hat{\zeta}_{N,k}, \quad \mathbf{x}_{N,k} = \mathcal{B}_N \mu_{N,k}, \quad \mu_{N,0} = \Phi(\mathbf{x}_0),
\end{equation*}
with \( \hat{\zeta}_{N,k} \) as an approximation of \( \zeta_k \) by the mean, given by
\[
\hat{\zeta}_{N,k} = \frac{1}{N_\zeta} \sum_{j=1}^{N_\zeta} \left( \Phi(\mathbf{f}(\mathbf{x}_{N,k}, \mathbf{w}_k^j)) - C_{X^+|X} \Phi(\mathbf{x}_{N,k}) \right),
\]
where \( \{\mathbf{w}_k^j\}_{j=1}^N \) are realizations of the stochastic noise in the dynamics. Using these definitions and Theorem \ref{theo:error_koop_sqrt_N_def}, it can be proven that there is an error bound of order \( O(N^{-1/2}) \) for the lifted-back trajectories.

\begin{proof}[Proof of Theorem \ref{theo:lifted_traj}]
    First, as in Proposition \ref{prop:app_covariances}, for \( \varepsilon > 0 \), with probability \( 1 - \delta \), we have
    \[
    \| C_{XX} - C_{XX}^N \| \leq \varepsilon,
    \]
    with
    \[
    \delta > 2 \exp \left( - \frac{N \varepsilon^2}{8 \| k \|_\infty} \right).
    \]
    Then, similarly to Theorem \ref{theo:error_koop_sqrt_N_def}, if \( N \) and \( \delta \) satisfy the previous inequality and \( \U\H \subseteq \H \), \( \B^*(\R^n)^* \subseteq \H \), then with probability \( (1 - \delta)^3 \),
    \[
    \| \U - \U_N \|_{\mathcal{H} \to \mathcal{H}} \leq C_\delta N^{-1/2}, \quad \| \B^* - \B_N^* \|_{(\R^n)^* \to \mathcal{H}} \leq C_\delta N^{-1/2},
    \]
    where
    \[
    C_\delta = \left( \frac{2}{c_1} + \frac{\sqrt{\| k \|_\infty}}{c_1^2} \right) \sqrt{8 \| k \|_\infty \ln \left( \frac{2}{\delta} \right)}.
    \]
    Now, for every \( \mathbf{z} \in \mathcal{X} \), we have
    \[
    \| \Phi(\mathbf{z}) \|^2 \leq \max_{\mathbf{x} \in \mathcal{X}} \| \Phi(\mathbf{x}) \|^2 = \max_{\mathbf{x} \in \mathcal{X}} \langle \Phi(\mathbf{x}), \Phi(\mathbf{x}) \rangle = \max_{\mathbf{x} \in \mathcal{X}} k(\mathbf{x}, \mathbf{x}) = \| k \|_\infty.
    \]
    Therefore, in particular, \( \| \hat{\mu}_k \| \leq \sqrt{\| k \|_\infty} \) for all \( k \).

    To prove the bound for \( \| \hat{\mathbf{x}}_k - \hat{\mathbf{x}}_{N,k} \| \), we first show by induction that there exists \( \Tilde{C}_{\delta,k}^1 \) such that
    \[
    \| \hat{\mu}_k - \hat{\mu}_{N,k} \| \leq \Tilde{C}_{\delta,k}^1 N^{-1/2}.
    \]
    For \( k = 1 \), we have
    \[
    \| \hat{\mu}_1 - \hat{\mu}_{N,1} \| = \| \U^* \hat{\mu}_0 - \U^*_N \hat{\mu}_{N,0} \| \leq \| \U^* - \U^*_N \| \| \Phi(\mathbf{x}_0) \| \leq C_\delta \sqrt{\| k \|_\infty} N^{-1/2}.
    \]
    Thus, for \( k = 1 \), we have \( \Tilde{C}_{\delta,1}^1 = C_\delta \sqrt{\| k \|_\infty} \).

    Now, suppose the bound holds for some \( k \in \mathbb{N} \). We need to prove it for \( k + 1 \). First, note that
    \[
    \| \hat{\mu}_{k+1} - \hat{\mu}_{N,k+1} \|
    \leq \| \U^* \hat{\mu}_k - \U^*_N \hat{\mu}_k \| + \| \U^*_N \hat{\mu}_k - \U^*_N \hat{\mu}_{N,k} \|.
    \]
    Using the triangle inequality, we get
    \[
    \| \hat{\mu}_{k+1} - \hat{\mu}_{N,k+1} \|
    \leq \| \U - \U_N \| \| \hat{\mu}_k \| + \| \U_N \| \| \hat{\mu}_k - \hat{\mu}_{N,k} \|.
    \]
    Substituting the bounds, we obtain
    \[
    \| \hat{\mu}_{k+1} - \hat{\mu}_{N,k+1} \|
    \leq C_\delta N^{-1/2} \| \hat{\mu}_k \| + (C_\delta + \| \U \|) \Tilde{C}_{\delta,k}^1 N^{-1/2}.
    \]
    This simplifies to
    \[
    \| \hat{\mu}_{k+1} - \hat{\mu}_{N,k+1} \|
    \leq \left( C_\delta \sqrt{\| k \|_\infty} + (C_\delta + \| \U \|) \Tilde{C}_{\delta,k}^1 \right) N^{-1/2}.
    \]
    Therefore, the bound holds for \( k + 1 \) with \( \Tilde{C}_{\delta,k+1}^1 = \left( C_\delta \sqrt{\| k \|_\infty} + (C_\delta + \| \U \|) \Tilde{C}_{\delta,k}^1 \right) \).

    Now for \( \| \hat{\mathbf{x}}_k - \hat{\mathbf{x}}_{N,k} \| \), for \( k = 0 \), we have
    \[
    \begin{aligned}
        \| \hat{\mathbf{x}}_0 - \hat{\mathbf{x}}_{N,0} \| = \| \B \hat{\mu}_0 - \B_N \hat{\mu}_{N,0} \| = \| \B \Phi(\mathbf{x}_0) - \B_N \Phi(\mathbf{x}_0) \|
    & \leq \| \B - \B_N \| \| \Phi(\mathbf{x}_0) \| \\ & \leq C_\delta \sqrt{\| k \|_\infty} N^{-1/2}.
    \end{aligned}
    \]
    Thus, \( C_{\delta,0}^1 = C_\delta \sqrt{\| k \|_\infty} \).

    For \( k \geq 1 \), we have
    \[
    \| \hat{\mathbf{x}}_k - \hat{\mathbf{x}}_{N,k} \|
    \leq \| \B \hat{\mu}_k - \B_N \hat{\mu}_k \| + \| \B_N \hat{\mu}_k - \B_N \hat{\mu}_{N,k} \|.
    \]
    This gives
    \[
    \| \hat{\mathbf{x}}_k - \hat{\mathbf{x}}_{N,k} \|
    \leq \| \B - \B_N \| \| \hat{\mu}_k \| + \| \B_N \| \| \hat{\mu}_k - \hat{\mu}_{N,k} \|.
    \]
    Substituting the bounds, we get
    \[
    \| \hat{\mathbf{x}}_k - \hat{\mathbf{x}}_{N,k} \|
    \leq \left( C_\delta \sqrt{\| k \|_\infty} + (\| \B \| + C_\delta) \Tilde{C}_{\delta,k}^1 \right) N^{-1/2}.
    \]
    Therefore, the constants for \( k \geq 1 \) are given by
    \[
    C_{\delta,k}^1 = \left( C_\delta \sqrt{\| k \|_\infty} + (\| \B \| + C_\delta) \Tilde{C}_{\delta,k}^1 \right).
    \]
    Finally, for \( \| \mathbb{E}[ \mathbf{x}_k - \mathbf{x}_{N,k} ] \| \), note that \( \hat{\zeta}_k \) and \( \hat{\zeta}_{N,k} \) are centered random variables. Thus,
    \[
    \| \mathbb{E}[ \mu_{k+1} - \mu_{N,k+1} ] \| = \| \mathbb{E}[ \U^* \mu_k + \zeta_k - \U_N^* \mu_{N,k} - \hat{\zeta}_{N,k} ] \| = \| \mathbb{E}[ \U^* \mu_k - \U_N^* \mu_{N,k}] \|.
    \]
    The calculations proceed as before, concluding with the same bound.
\end{proof}

\subsection{Remarks of Theorem \ref{theo:lifted_traj}}

In the context of EDMD, other methods for recovering trajectories have been proposed, supported by empirical justification. However, to the best of our knowledge, this work represents the first attempt to generalize this idea to a broader concept of the left inverse of the feature map. In this sense, the introduction of the lifting-back operator constitutes an original contribution of this research.

An important feature of the lifting-back operator is that it can also be interpreted as a Koopman operator, but with a different perspective, as it represents a transition between two spaces: the RKHS and the original space.

It is crucial to note that all the results presented in the theorems are applicable to mean trajectories. A straightforward relationship for stochastic trajectories does not exist due to the approximation of the additive noise. In this case, Hoeffding's inequality could again be employed, but it introduces a factor of $N^{-1/4}$, which reduces the probability of successful approximation for the entire trajectory.

\section{Numerical results}
We evaluate the approximation of the Koopman operator on both linear and nonlinear systems by comparing system trajectories and analyzing the associated errors, in order to assess the bound established in Theorem~\ref{theo:lifted_traj}. In all experiments, the random seed was fixed at 42 to ensure reproducibility. The code used to generate the results is available at the following \href{https://github.com/diegoolguinw/kedmd_bound.git}{GitHub repository}\footnote{\url{https://github.com/diegoolguinw/kedmd\_bound.git}}. 

\subsection*{Linear systems}

As an example of a linear system, we consider the following discrete-time dynamical system:
\[
\mathbf{x}_{k+1} = \mathbf{A}_\alpha\mathbf{x}_k + \mathbf{w}_k,
\]
where
\[
\mathbf{A}_\alpha = 
\mathbf{I}_{3\times 3}
+ 
\begin{pmatrix}
    0.01 & 0.04 & 0 \\
    0.01 & 0.02 & \alpha \\
    0.0 & 0.4 & 0.02
\end{pmatrix},
\]
and $\mathbf{w}_k \sim \mathcal{N}(\mathbf{0}_{3}, \sigma \mathbf{I}_{3\times 3})$, with $\mathbf{I}_{3\times 3}$ denoting the $3 \times 3$ identity matrix and $\sigma = 10^{-3}$. Initially, we consider $\alpha = -0.3$. From this perspective, the discrete-time system can be interpreted as the discretization of a continuous-time system with time step $\Delta t = 1$.

For the Koopman approximation, we sample initial conditions from a normal distribution of null mean and \(\mathbf{I}_{3 \times 3}\) as covariance matrix and use a Matérn kernel with smoothness parameter $\nu = 0.5$ and length scale $\ell = 10^3$.

Next, we compare the trajectories generated by the original system and the Koopman-based approximation. We consider the initial condition $\mathbf{x}_0 = (0.1, 0.1, 0.1)$ and a time horizon of $T = 30$. The parameter used for approximating the lifted noise dynamics, $N_\zeta$, was set to 30.

Due to the stochastic nature of the system, we generate 30 sample trajectories from the dynamics and plot them along with the empirical mean trajectory. In Figure~\ref{fig:error_kEDMD_lin}, we compare the empirical error across 10 trajectory realizations with the theoretical upper bound given in Theorem~\ref{theo:lifted_traj}. Since the constant $c_1$ is unknown, we replace $C_\delta$ with an estimated value $\Tilde{C}_\delta$. The results obtained for $\delta = 10^{-15}$ show a good match with the theoretical prediction, indicating that a high-probability guarantee is necessary for the success of the approximation.

This observation is further supported by a curve fitting of the form $A \cdot N^B$, where the fitted exponent satisfies $B > 1/2$. This implies that the convergence rate observed in practice is faster than the one suggested by the theoretical bound in Theorem~\ref{theo:lifted_traj}.

\begin{figure}[!ht]
\centering
\begin{subfigure}{.48\textwidth}
  \centering
  \includegraphics[width=\linewidth]{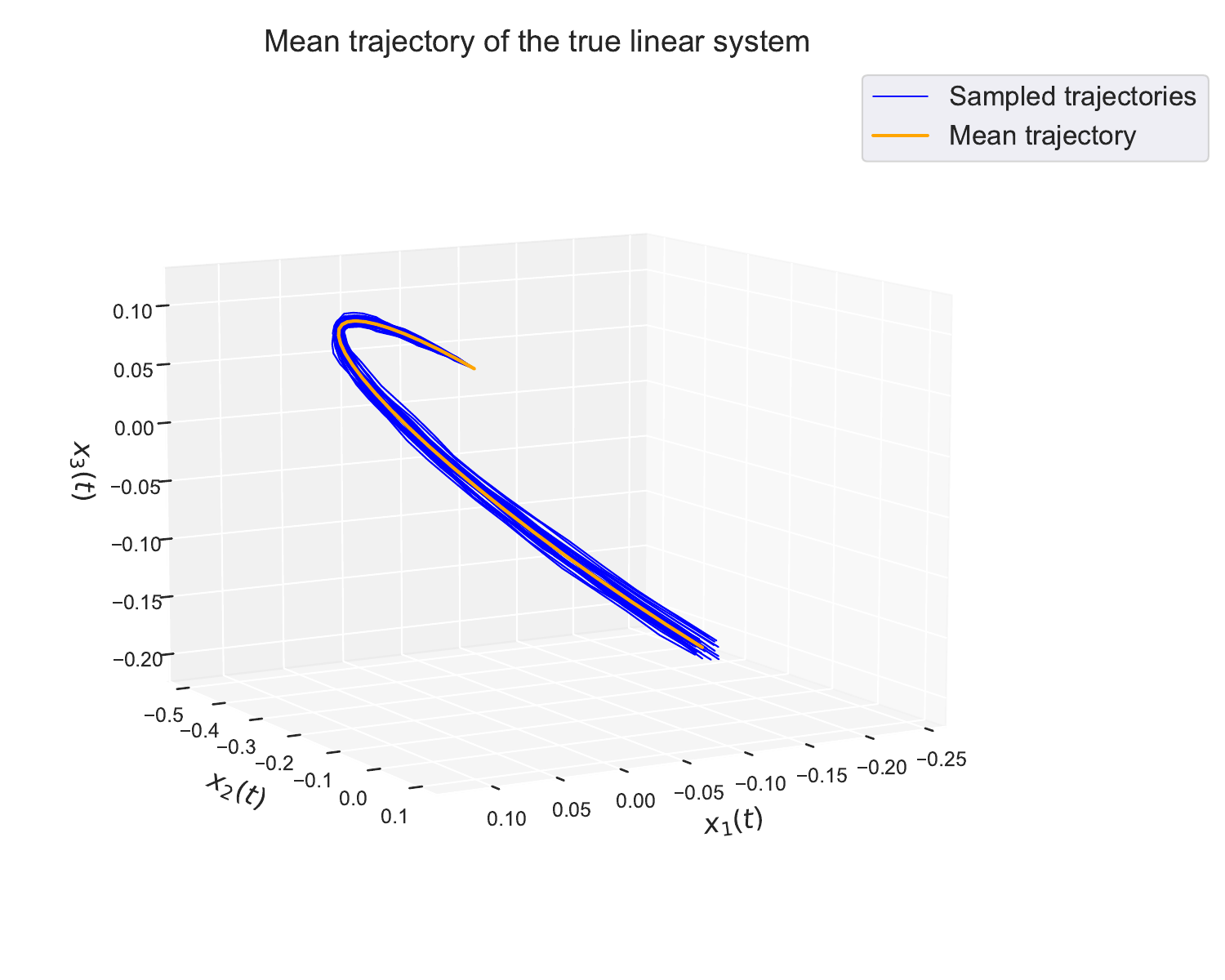}
\end{subfigure}%
\begin{subfigure}{.48\textwidth}
  \centering
  \includegraphics[width=\linewidth]{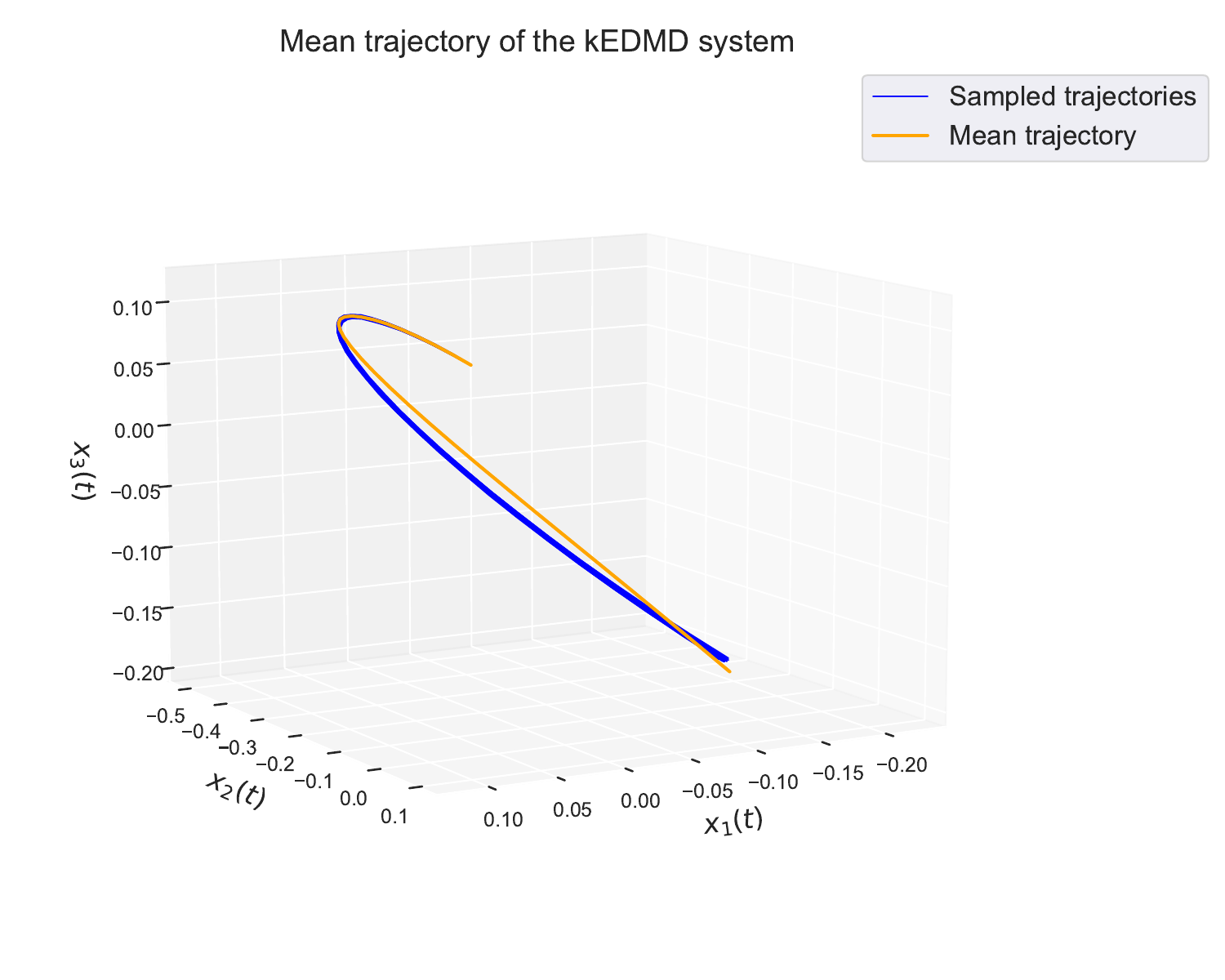}
\end{subfigure}
\caption{Phase portraits of the linear system in three-dimensional space. The orange line represents the mean trajectory, while the blue lines show 30 trajectories sampled from the system distribution.}
\label{fig:trajectories_lin_3d}
\end{figure}

\begin{figure}[!ht]
    \centering
    \includegraphics[width=\linewidth]{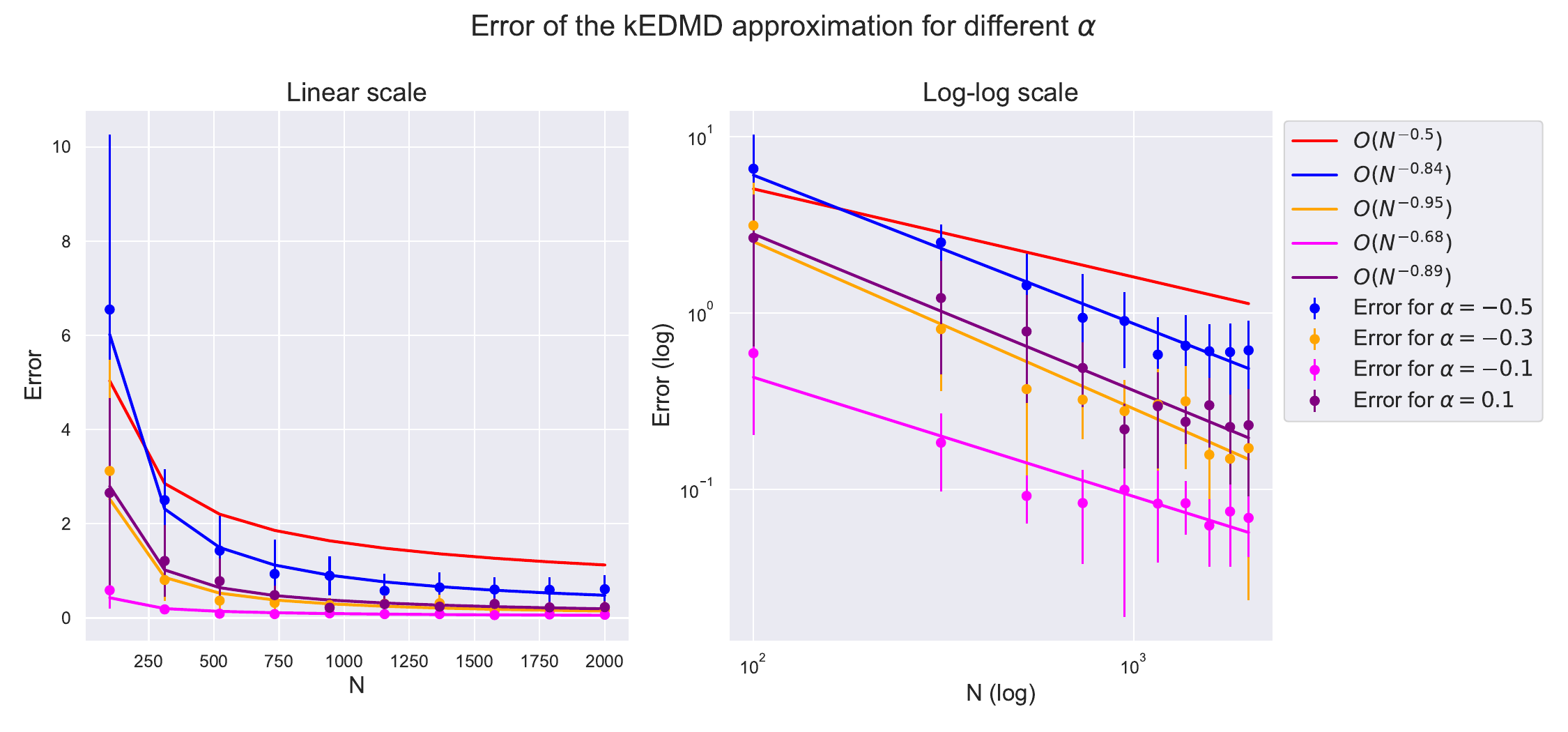}
    \caption{Errors for different values of the parameter $\alpha$ defining the system matrix $\mathbf{A}_\alpha$. The plots show the error committed by the kEDMD trajectories as a function of $N$. Dots and error bars represent 10 realizations, while the continuous lines correspond to fits of the form $A \cdot N^B$, with the fitted exponent $B$ indicated in the legend. The red line shows the theoretical upper bound $\Tilde{C}_\delta N^{-1/2}$ for $\delta = 10^{-15}$. The left panel presents the results in linear scale, and the right panel in log-log scale.}
    \label{fig:error_kEDMD_lin}
\end{figure}

\subsection*{Nonlinear Systems: SIR Model}

As an example of a nonlinear system, we test the Koopman operator approximation on the classical SIR (Susceptible-Infected-Recovered) model. This epidemiological model describes the spread of a simple infectious disease in which susceptible individuals become infected through contact with infected individuals, and subsequently recover after a certain period, without re-entering the dynamics.

The discrete-time equations of the SIR model are given by:
\[
\begin{aligned}
    S_{k+1} &= S_k - \beta S_k I_k, \\
    I_{k+1} &= I_k + \beta S_k I_k - \gamma I_k, \\
    R_{k+1} &= R_k + \gamma I_k.
\end{aligned}
\]
To account for stochasticity, we add zero-mean Gaussian noise $\mathcal{N}(\mathbf{0}_3, \sigma \mathbf{I}_{3\times 3})$ with $\sigma = 0.01$. The parameters used for the simulation are $\beta = 1.0$ and $\gamma = 0.3$. Since the state space of the SIR model lies within the 2-simplex in $\mathbb{R}^3$, we initialize the training data using samples drawn from a Dirichlet distribution with parameter vector $(1,1,1)$, which corresponds to the uniform distribution over the simplex.

\begin{figure}[!ht]
\centering
\begin{subfigure}{.48\textwidth}
  \centering
  \includegraphics[width=0.9\linewidth]{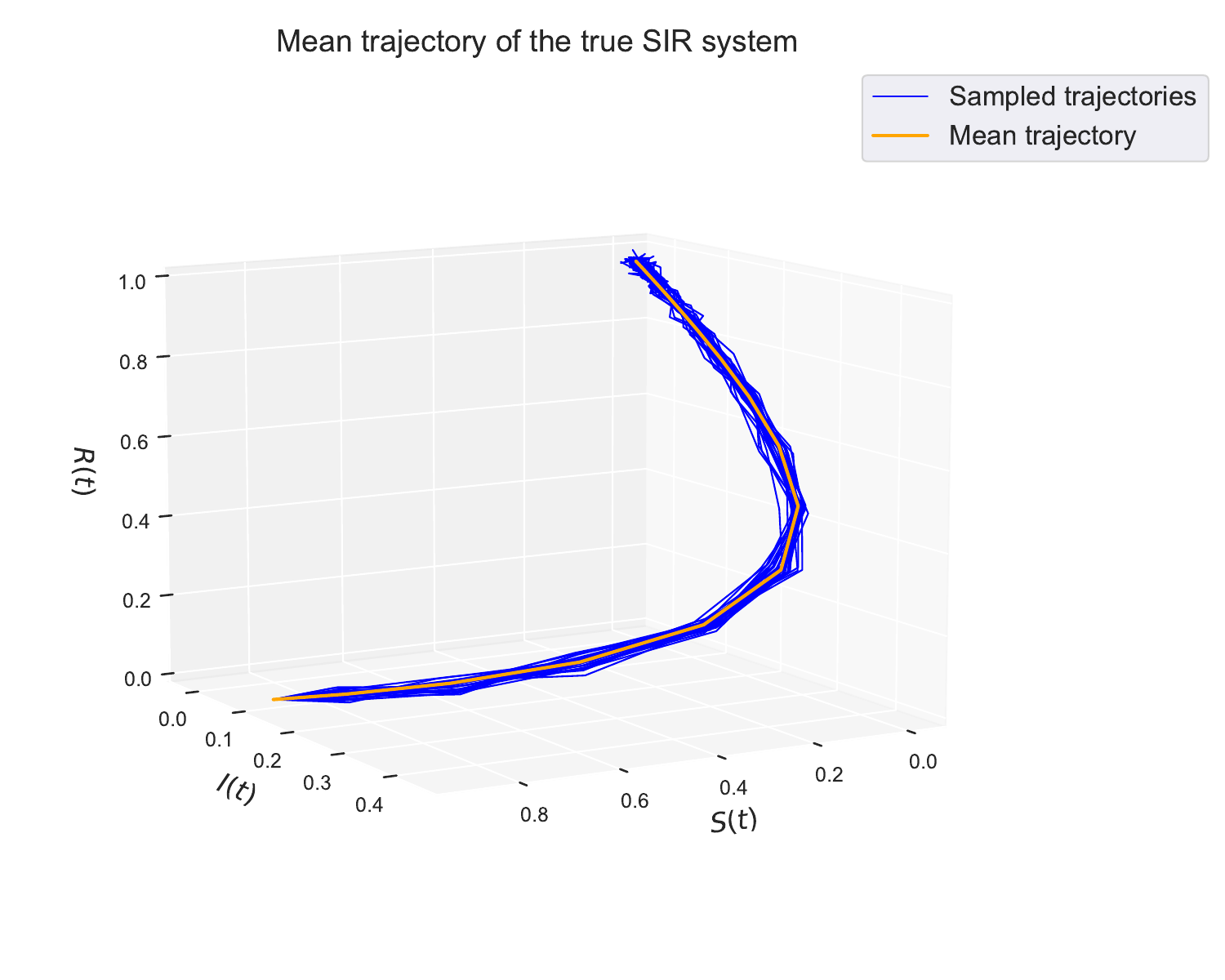}
\end{subfigure}%
\begin{subfigure}{.48\textwidth}
  \centering
  \includegraphics[width=0.9\linewidth]{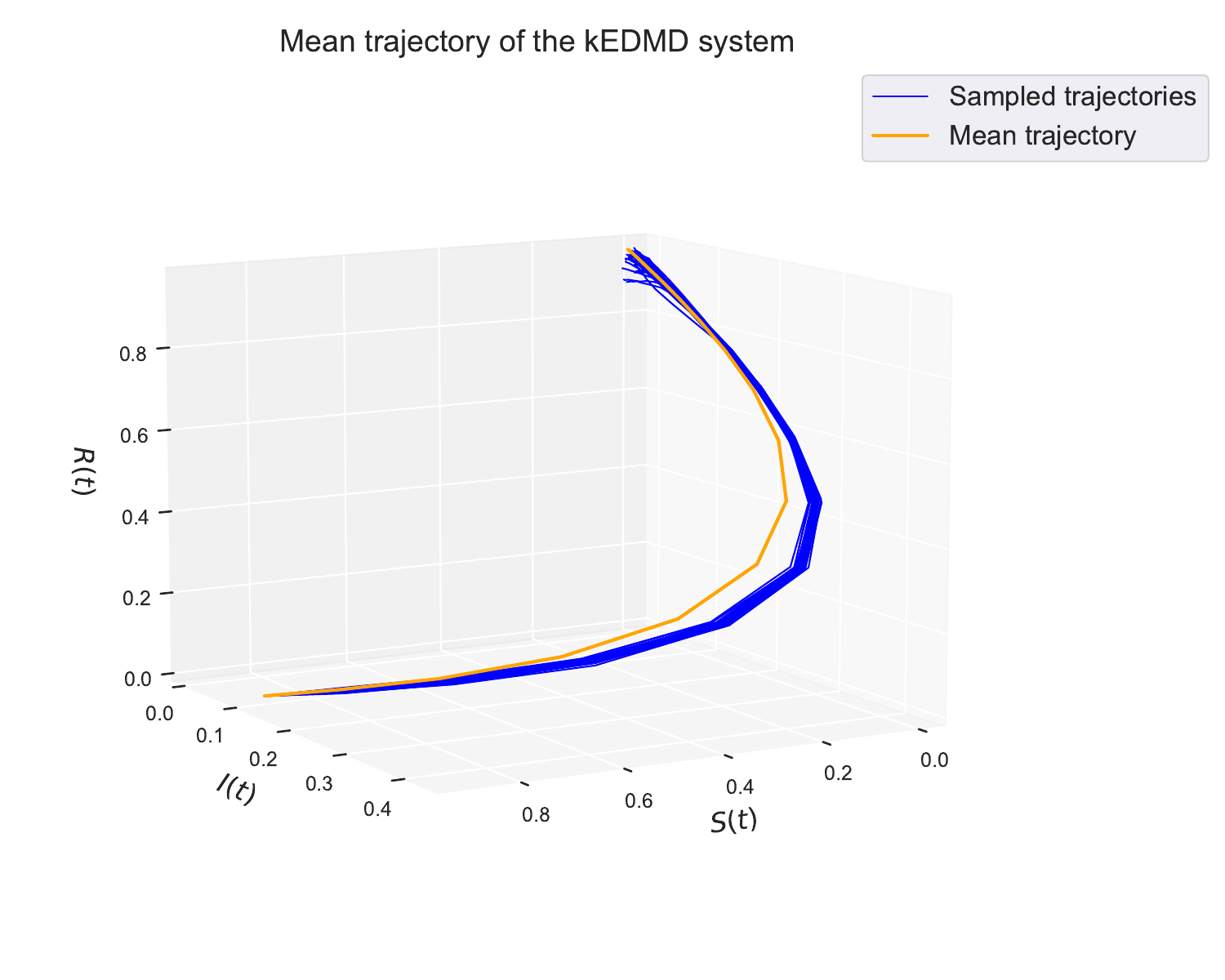}
\end{subfigure}
\caption{Phase portraits of the SIR system in three-dimensional space. The orange line represents the mean trajectory, while the blue lines correspond to 30 trajectories sampled from the system dynamics. The left panel shows the trajectories generated by the true system, and the right panel displays those generated using the kEDMD approximation.}
\label{fig:trajectories_sir_3d}
\end{figure}
\noindent The Koopman operator is approximated using a Matérn kernel with smoothness parameter $\nu = 0.5$ and length scale $\ell = 1.0$. Trajectories are generated from the initial condition $\mathbf{x}_0 = (S_0, I_0, R_0) = (0.9, 0.1, 0.0)$ over a time horizon of $T = 20$, with 30 realizations to account for the stochastic behavior. The lifted noise dynamics approximation parameter was set to $N_\zeta = 30$.
\begin{figure}[!ht]
    \centering
    \includegraphics[width=1.\linewidth]{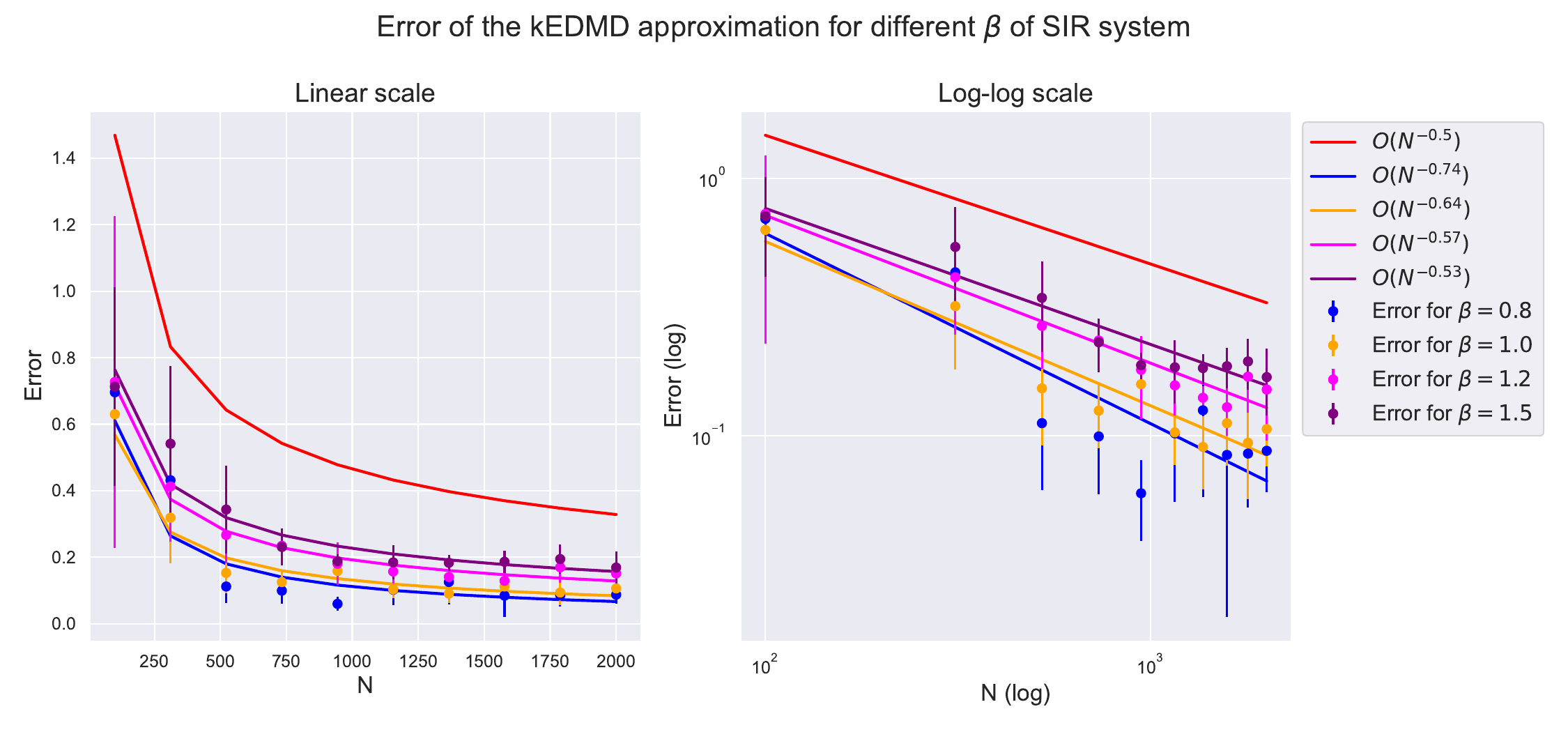}
    \caption{Errors for different values of the parameter $\beta$ defining the trasmision rate in SIR model. The plots show the error committed by the kEDMD trajectories as a function of $N$. Dots and error bars represent 10 realizations, while the continuous lines correspond to fits of the form $A \cdot N^B$, with the fitted exponent $B$ indicated in the legend. The red line shows the theoretical upper bound $\Tilde{C}_\delta N^{-1/2}$ for $\delta = 0.1$. The left panel presents the results in linear scale, and the right panel in log-log scale.}
    \label{fig:error_kEDMD_SIR}
\end{figure}
\noindent The results for trajectory comparisons between the true system and its Koopman-based approximation (kEDMD) are shown in Figure \ref{fig:trajectories_sir_3d}. These plots illustrate both individual realizations and the average system behavior across time.

In addition, Figure~\ref{fig:error_kEDMD_SIR} presents the error analysis of the kEDMD trajectories for different values of $\beta$, the parameter controlling the nonlinearity of the system. As in the linear case, we compare the empirical errors to the theoretical bound. However, for the SIR model, the constant $\Tilde{C}_\delta$ is computed with $\delta = 0.1$, indicating that this system requires a lower probability threshold for the bound to hold effectively.

\section{Discussion}
The bound obtained for the Koopman operator in this work is consistent with results previously reported in the literature \cite{Zhang2022APredictions, Bevanda2023KoopmanRegression, Kohne2025boldsymbolLboldsymbolinftyDecomposition}, and particularly aligned with the recent analysis in \cite{Philipp2024ErrorOperator}, which employs methods similar to those used here. However, a key distinction of the present work is the comparison of system trajectories using the lifting-back operator. This operator, which can also be interpreted as a Koopman operator, is shown to admit an approximation with an error bound of order $O(N^{-1/2})$. This result helps to unify the concept of transfer operators—both for the forward dynamics and the backward lifting—in the framework of the Koopman operator. This unification is closely related to the Perron–Frobenius operator, which governs the semigroup of the system's evolution, as previously discussed by Gerlach et al. \cite{Gerlach2020TheSystems}.

Assumptions \ref{assu:assum1}, \ref{assu:assum2} and \ref{assu:assum3} provides a sufficient condition for ensuring the inclusion $\mathcal{U}\mathcal{H} \subset \mathcal{H}$ and $(\mathcal{B})^*(\mathbb{R}^p)^* \subseteq \mathcal{H}$. These conditions are satisfied by a wide range of systems commonly encountered in engineering and scientific applications. Proposition~\ref{prop:koopman_preserve} offers a sufficient condition for the stochastic setting, extending a result previously established by Köhne et al. \cite{Kohne2025boldsymbolLboldsymbolinftyDecomposition}.

It is important to note that, for linear systems and systems driven by Gaussian noise, the assumption of compactness is generally not satisfied. However, this issue can be addressed by constraining the distribution to a compact subset with high probability. In the case of linear systems over a finite time horizon, one can also restrict attention to a compact region that contains all trajectories starting from a fixed initial condition. This strategy is applicable to a broader class of systems that do not exhibit finite-time blow-up.

Theorem~\ref{theo:lifted_traj} addresses only the expected behavior of the kEDMD estimator and does not provide any probabilistic bounds for the full stochastic system. Establishing such bounds would require additional assumptions on the dynamics function and the feature map $\Phi$, particularly the imposition of Lipschitz continuity. This would allow the derivation of bounds on the behavior of the approximated noise term $\hat{\zeta}$. Furthermore, the current bound does not fully capture the effects of the curse of dimensionality, which becomes evident when sampling points from high-dimensional spaces. Although Monte Carlo methods are typically robust to this issue, practical implementations may still encounter difficulties. One potential remedy involves incorporating bounds based on the fill distance of the sampled points, as proposed in the deterministic setting by Köhne et al. \cite{Kohne2025boldsymbolLboldsymbolinftyDecomposition}. 

Regarding the numerical results, it is observed that for both types of systems considered, the convergence rate of order $O(N^{-1/2})$ is achieved across all tested parameter values. This is particularly significant in the case of the SIR model, where the nonlinearity of the system is varied through changes in its parameters. However, an important distinction between the two systems must be emphasized. In the linear system, the fitted exponents are often smaller than $-1/2$, indicating faster convergence. This improvement in convergence comes at the cost of choosing a very small $\delta = 10^{-15}$, which leads to a success probability of $(1 - \delta)^3 \approx 1$ a highly optimistic scenario that holds only with extremely high probability.

On the other hand, for the SIR system, the fitted exponents remain close to $-1/2$, yet the bound is satisfied with a significantly lower success probability requirement. For instance, setting $\delta = 0.1$ yields a success probability of $(1 - \delta)^3 = 0.9^3 \approx 0.73$, which indicates that the theoretical bound remains valid even in experiments with moderate probability. This highlights a practical advantage of the bound in more complex, nonlinear systems such as SIR.

This can be interpreted in terms of the ability to densely fill the state space $\mathcal{X}$ with points sampled from the distribution of the state $X$. In linear systems, the state space is often large, requiring a higher number of sampled points to adequately approximate the Koopman operator. In contrast, the SIR system has a state space constrained to the 2-simplex, which is easier to fill with points drawn from a Dirichlet distribution. This difference in the structure of the state space is closely related to the probability of success: while linear systems generally require a larger sample size, nonlinear systems, such as the SIR model, need fewer points to achieve the same approximation accuracy. The exponent of the approximation error is thus tied to the inherent difficulty in approximating the system, with linear systems typically being easier to approximate than nonlinear systems like the SIR model.

In cases where some knowledge of the system's behavior near the initial condition is available, it is possible to tailor the state space distribution by sampling more points near strategic regions. Moreover, the Koopman operator can be approximated at each time step by sampling points near the past state, thereby prioritizing regions of higher likelihood. However, this strategy does not offer the same guarantees as the one-time approximation, since the bound provided is based on the assumption that Hoeffding’s inequality applies to a single sampling. Additionally, this approach increases the computational cost, as it requires multiple samplings at each time step.

Also, the hiperparameters of the kernel election could be critical in the approximation of the operator. In the case of Matérn kernel, two hyperparameters have to be choose, $\nu$, the smoothness parameter, and $\ell$ the bandwidth or lenght scale. The election of $\nu$ is driven by the regularity of the dynamical system function, in virtue of the Assumptions 2 and 3. But, the bandwidth is a mistery and, a priori, it must be decided in function of the sample structure o the number of points taken. Other kernels could be considered, for example compact support kernels \cite{Wendland1995PiecewiseDegree, Kohne2025boldsymbolLboldsymbolinftyDecomposition}, and also others possible learnable or related with neural networks as Neural Tangent Kernel \cite{Jacot2018NeuralNetworks}.

\section{Conclusion}
In this paper, we propose a framework for the definition and approximation of Koopman operators, which requires certain regularity conditions on the dynamics function, the distribution of the dynamics, and the domain. This framework leads to Theorem~\ref{theo:error_koop_sqrt_N_def}, which introduces a novel result for operator approximation, establishing an error bound of $O(N^{-1/2})$ with an associated probability of success. The theorem also provides an explicit constant for the error decay. However, the current error bound does not account for the curse of dimensionality, an effect that is not captured by Hoeffding’s inequality. This issue can be addressed in future work by incorporating additional terms into the bounds to account for the challenges posed by high-dimensional spaces.

This bound represents a significant contribution in itself, as it opens the door to applications involving operators acting in Hilbert spaces, particularly in Reproducing Kernel Hilbert Spaces (RKHS). This could serve as a foundational step towards the development of alternative forms of Extended Dynamic Mode Decomposition (EDMD), potentially incorporating kernel approximations via custom dictionaries, and even learnable dictionaries. For instance, this approach could be extended to neural networks, leveraging the Neural Tangent Kernel theory \cite{Jacot2018NeuralNetworks} to enhance the flexibility and adaptability of the method.

We then introduced a lifting-back operator as a method to recover trajectories in the original state space. This operator can also be viewed as a Koopman operator, yielding an error bound of $O(N^{-1/2})$ for the mean of the lifted-back trajectories. Finally, we tested the bound on several systems, observing that the errors decrease slightly faster than $O(N^{-1/2})$ in some cases, which implies that the probability of success $\delta$ varies across systems.

\section*{Acknowledgements}
This work was supported by Center for Mathematical Modeling (CMM) Basal fund FB210005 for center of excellence, FONDECYT 1240200, FONDAP/1523A0002, ECOS240038 and DO210001 all from ANID-Chile, and Joint Cooperation Fund Chile-Mexico 2022: 'Mathematical
Modeling of Epidemic Processes, Incorporating Population Structure, Regional Distribution and
Risk Groups'.

\bibliographystyle{plainnat}
\bibliography{references}  

\end{document}